\pgfplotsset{compat=1.14}
\pgfplotsset{compat=newest}
\newcommand{\myeq}{:=}
\newtheorem{theorem}{Theorem}[section]
\newtheorem{problem}[theorem]{Problem}
\newtheorem{lemma}[theorem]{Lemma}
\newtheorem{remark}[theorem]{Remark}
\newtheorem{proposition}[theorem]{Proposition}
\newtheorem{corollary}[theorem]{Corollary}
\newtheorem{conjecture}[theorem]{Conjecture}
\newtheorem{definition}[theorem]{Definition}
\newtheorem{example}[theorem]{Example}
\newtheorem{observation}[theorem]{Observation}
\newcommand{\edge}{\Delta}
\newcommand{\C}{\mathbb{C}}
\newcommand{\B}{\mathcal{B}}
\newcommand{\A}{\mathcal{A}}
\newcommand{\eps}{\epsilon}
\newcommand{\Tr}{\mathrm{Tr}}
\newcommand{\G}{\mathcal{G}}
\newcommand{\T}{\mathcal{T}}
\newcommand{\bZ}{\mathbb{Z}}
\newcommand{\diredge}{f}
\newcommand{\che}{\check{\diredge}}
\newcommand{\Spec}{\mathrm{Spec}}
\newcommand{\spr}{\mathrm{spr}}
\newcommand{\im}{\mathrm{Im}}
\newcommand{\calH}{\mathcal{H}}
\newcommand{\calB}{\mathcal{B}}
\newcommand{\calA}{\mathcal{A}}
\newcommand{\calG}{\mathcal{G}}
\newcommand{\calV}{\mathcal{V}}
\newcommand{\calK}{\mathcal{K}}
\newcommand{\calC}{\mathcal{C}}
\newcommand{\calT}{\mathcal{T}}
\newcommand{\calU}{\mathcal{U}}
\newcommand{\calE}{\mathcal{E}}
\newcommand{\calTf}{\mathcal{T}_{\mathrm{full}}}
\newcommand{\bR}{\mathbb{R}}
\newcommand{\bC}{\mathbb{C}}
\newcommand{\bF}{\mathbb{F}}
\newcommand{\spa}{\mathrm{Span}}
\newcommand{\func}{\varphi}
\newcommand{\dos}{\mu_{A_{\mathcal{T}}}}
\newcommand{\cred}{C_{\mathrm{red}}^*}
\title{ Spectra of Infinite Graphs via Freeness with Amalgamation}
\author{Jorge Garza-Vargas \\ jgarzavargas@berkeley.edu \\ UC Berkeley \and Archit Kulkarni \\ akulkarni@berkeley.edu\\ UC Berkeley}
\date{\today}
\begin{document}

\maketitle

\begin{abstract}
We use tools from  free probability  to study the spectra of Hermitian  operators on infinite graphs.  Special attention is devoted  to universal covering trees of finite  graphs. For operators on these graphs we derive a new variational formula for the spectral radius and provide new proofs of  results due to Sunada and  Aomoto using free probability.

With the goal of extending the applicability of free probability techniques beyond universal covering trees, we introduce a new combinatorial product operation on graphs and show that, in the non-commutative probability context, it corresponds to the notion of freeness with amalgamation.   We show that  Cayley graphs of amalgamated free products of groups, as well as universal covering trees, can be constructed using our graph product.  

\end{abstract}

\pagebreak

\tableofcontents

\pagebreak

\section{Introduction}

In the present work, by a \emph{graph} we mean a locally finite undirected graph allowing loops and multi-edges, unless otherwise specified. Given a graph $\mathcal{G}$, we denote its vertex set by $V(\mathcal{G})$ and its edge set by $E(\mathcal{G})  $\marginpar{$V(\mathcal{G}), E(\mathcal{G})$}. Although we will only be considering undirected graphs it will be convenient to view $E(\calG)$ as a set of directed edges, where each $\diredge \in E(\calG)$ will have a \emph{source} $\sigma(\diredge)\in V(\calG)$\marginpar{$\sigma, \tau$} and a \emph{target} $\tau(\diredge)\in V(\calG)$,  and $E(\calG)$ will be equipped with the involution $\,\check{} : E(\calG)\to E(\calG)$ which to each edge $\diredge$ assigns its reversed edge $\check{\diredge}$.\marginpar{$\diredge, \che$} Given any vertex $u\in V(\calG)$ it will prove useful  to define the sets
$$\sigma(u) := \{\diredge\in E(\calG): \sigma(\diredge)=u\} \quad \text{and} \quad \tau(u):= \{\diredge\in E(\calG) : \tau(\diredge)=u\}.$$
Oftentimes we will be working with graphs $\calG$ that are endowed with \emph{edge weights} and a \emph{vertex potential}. Formally, the edge weights will be given by a function $a:E(\calG) \to \bR$\footnote{We are slightly deviating from the standard definition of edge weight, which requires the $a_\diredge$  to be strictly positive, since in this work  allowing negative coefficients sometimes simplifies our exposition. Moreover, in the important case of universal covering trees  this is not an actual discrepancy, since  gauge invariance allows one to ``turn" negative weights into positive ones without affecting the spectrum.} that is symmetric in the sense that $a_\diredge=a_{\che}$ for all $\diredge\in E(\calG)$\marginpar{$a, b$}, and the vertex potential will be given by a function $b: V(\calG)\to \bR$. 

The adjacency matrix, the graph Laplacian matrix, and transition matrices for symmetric random walks all fall under the umbrella of so-called \emph{Jacobi matrices on graphs} \cite{avni2020periodic}.  These are bounded Hermitian operators associated to weighted graphs with vertex potential. More specifically, if $\calG$ is a graph with edge weights $a$ and vertex potential $b$, then the Jacobi matrix on $\calG$ is the bounded operator $A_\calG$ acting on $\calH:= \ell^2(V(\calG))$ by 
\begin{equation}\label{eqn:jacobi} (A_\mathcal{G} \xi)(u) = b_u \xi(u) + \sum_{\substack{ \diredge \in \sigma(u)}} a_\diredge \xi(\tau(\diredge)). \end{equation}
When working with Jacobi matrices on graphs (which will be the main focus of this paper) it is convenient to consider the \emph{spectral measures} associated to the vertices of the graph. To be precise, given a vertex $u \in V(\mathcal{G})$ one can define the state $\func_u: B(\mathcal{H}) \to \mathbb{C}$ \marginpar{$\func_u$} as
\begin{equation}
\label{eq:rootedstate}
 \func_u(X) \myeq \langle \delta_{u}, X \delta_{u} \rangle, \quad \forall X \in B(\mathcal{H}), 
\end{equation}
where $\delta_u \in \ell^2(V(\mathcal{G}))$ \marginpar{$\delta_u$} denotes the indicator function of the singleton $\{u\}$. Then, since $A_\mathcal{G}$ is self-adjoint, its spectrum  $\Spec(A_\mathcal{G})$\marginpar{$\Spec(\cdot)$} is contained in $\mathbb{R}$, and  by the functional calculus  $\func_u$ induces a probability measure $\mu_{A_\mathcal{G}, u}$\marginpar{$\mu_{A_\mathcal{G}, u}$} supported on $\Spec(A_\mathcal{G})$ with the property that 
$$\int_\mathbb{R} x^k d\mu_{A_\mathcal{G}, u}(x) = \func_u\left(A_\G^k\right)$$
for all $k \in \mathbb{Z}_{\geq 0}$. Throughout this work we  refer to $\mu_{A_\mathcal{G}, u}$ as the spectral measure of $A_\mathcal{G}$ with respect to the root $u \in V(\mathcal{G})$.  In terms of functional analysis, $\mu_{A_\mathcal{G}, u}$ is the composition of the usual projection-valued spectral measure of $A_{\mathcal{G}}$ with $\func_u$. 

The spectra and spectral measures of operators on  infinite graphs  have been extensively studied in the last several decades in different contexts. But despite significant progress in the area, current mathematical tools are still unable to answer simple fundamental questions. 

Since our goal is to provide a new perspective on  problems of current interest, the content of this paper is a combination of new results, new proofs of existing results, and new tools for the analysis of infinite graphs, all through the lens of free probability. 

Our discussion includes the following families of graphs.

\paragraph{Cayley Graphs.} Let $G$ be a finitely generated group and $S\subset G$ be a finite symmetric generating set; i.e. we assume that if $s \in S$ then $s^{-1}\in S$. We denote by $\Gamma = \Gamma(G, S)$ \marginpar{$\Gamma(G,S)$} the left  Cayley graph of $G$ with respect to $S$. Note that since $S$ is symmetric $\Gamma$ is undirected and by definition of $\Gamma$, any symmetric weighting $a: S \to \mathbb{R}^+$ (i.e. $a(s)=a(s^{-1})$) induces a symmetric weighting on the edges of $\Gamma$ in the obvious way. Typically, in this context vertex potentials are not considered; that is, one takes the constant function $b\equiv 0$.  The canonical  measure associated to $A_\Gamma$ is the spectral measure of $\Gamma$ with respect to the identity $e\in G$. Both $\Spec(A_\Gamma)$ and $\mu_{\Gamma, e}$ have been studied thoroughly in the context of random walks on groups \cite{kesten1959symmetric, woess1986nearest, woess1987context,mclaughlin1988random, cartwright1986random}. However, several basic spectral questions about some natural Cayley graphs remain open \cite{kollar2019line}.     

The amalgamated free product of groups inspired the notions of free independence and freeness with amalgamation \cite{voiculescu1985symmetries}, and since Voiculescu's seminal work it became apparent that tools from free probability can provide important insights into the spectral theory of certain Cayley graphs. In this work we focus on a new connection, namely the role of freeness with amalgamation in the study of universal covering graphs, which are not always Cayley graphs.

\paragraph{Universal Covering Graphs.} Let $\mathcal{G}$ be a connected graph with $n$ vertices.  One can form its \emph{universal covering graph}\footnote{See Section \ref{subsecuniversalcovers} for a precise definition and a subtlety involving covers of loops.} (also called universal covering tree or universal cover), 
denoted here by $\mathcal{T}(\mathcal{G})$\marginpar{$\calT(\calG)$}, or simply by $\calT$ when $\calG$ is clear from the context.   Recall that $\mathcal{T}$ is an infinite tree if $\mathcal{G}$ has at least one cycle or loop and is $\mathcal{G}$ itself when $\mathcal{G}$ is a tree.  In this context we will often refer to $\calG$ as the \emph{base graph.}

The universal covering graph comes with a covering map $\Xi: \mathcal{T}\to \mathcal{G}$. So, when $\calG$ is equipped with edge weights $a$ and  vertex potential $b$, one can use $\Xi$\marginpar{$\Xi$} in the obvious way to lift $a$ and $b$ to functions on $E(\calT)$ and $V(\calT)$ respectively, and with this equip $\calT$ with (periodic) edge weights and vertex potential.   The corresponding Jacobi matrix $A_{\calT}$\marginpar{$A_\calT$} can then be viewed as a \emph{pullback} of $A_\calG$, and it is referred to as a \emph{periodic Jacobi matrix with period $n$} \cite{avni2020periodic} or a \emph{pulled-back local operator} \cite{angel2015non}. 

It is standard to  associate  to $A_\mathcal{T}$ a set of spectral measures as follows. For any $u \in V(\mathcal{G})$ we fix any representative $\tilde{u}\in V(\mathcal{T})$ in the fiber $\Xi^{-1}(u)$ and consider the spectral measure $\mu_u:= \mu_{A_{\calT}, \tilde{u}}$\marginpar{$\mu_u$}. We can then associated the following canonical measure to $A_{\calT}$:\marginpar{$\dos$}
$$\dos := \frac{1}{n} \sum_{u\in V(\calG)} \mu_u, $$
which is referred to  as the \emph{density of states} of $A_\mathcal{T}$ in accordance with the physics and spectral theory literature. 


It is a well known fact that $\T$ is the limit, in the Benjamini-Schramm sense, of random lifts of $\G$ (see Section \ref{subsecrandlifts}). Hence, $A_\T$  can be regarded as a limit of random matrices. A bit of thought from the free probability perspective shows that in fact $A_\T$ can be viewed as an operator-valued matrix with free entries (see Section \ref{secnumberofbands}). This is the starting point of the present work. 

Previous results give an explicit description of  $\Spec(A_\mathcal{T})$ and $\dos$ when $\mathcal{G}$ has a particular structure \cite{kesten1959symmetric, mckay1981expected, godsil1988walk, figa1994harmonic}. Others have made some progress in the case when $\mathcal{G}$ is an arbitrary graph \cite{aomoto1988algebraic,aomoto1991point, sy1992discrete,  sunada1992group,  avni2020periodic}. However, as we discuss in the last section, many fundamental questions remain open (also see \cite{avni2020periodic}).    

\paragraph{Amalgamated Free Product for Graphs.} As mentioned above, since generic universal covering graphs are not Cayley graphs,  the emergence of freeness with amalgamation in this context might seem somewhat fortuitous. Here, we clarify this connection  by introducing a graph product that corresponds to the notion of freeness with amalgamation. 

 Here is some context.  Inspired by the combinatorial description of Cayley graphs of free products of groups, Quenell \cite{quenell1994combinatorics} introduced the free product of graphs. Only later it was understood by Accardi, Lenczewski, and Sa{\l}apata \cite{accardi2007decompositions} that this graph product is equivalent Voiculescu's free product of Hilbert spaces \cite{voiculescu1992free} and that free probability can be used to compute the spectra of graphs arising from Quenell's graph product. We refer the reader to Section \ref{sec:graph-products} for a detailed and more precise discussion. 
 
In the spirit of \cite{quenell1994combinatorics} and \cite{accardi2007decompositions}, in this paper we define a combinatorial graph product and  show that the machinery of freeness with amalgamation can be used to compute the spectra of graphs arising from this product. In particular, universal covering trees and Cayley graphs of amalgamated products of groups can be constructed using our product.

\subsection*{Bibliographic Note}

After posting the first version of this article to the arXiv, we became aware of the work of Avni, Breuer and Simon \cite{avni2020periodic} posted six weeks prior.  Upon reading their work, we have revised our article in a few ways.  First, in place of our previous notion of ``adjacency operators on weighted graphs,'' we have adopted their terminology of Jacobi matrices on graphs, both to provide consistency in the literature and because it provides a useful distinction between diagonal elements $b_v$ and loops (which behave differently upon taking covers).  Second, our theorem on the number of bands in the spectrum was demonstrated in \cite{avni2020periodic} to be implicit in work of Sunada \cite{sunada1992group}, which we had not realized.  Both our independent  proof and the proof given in \cite{avni2020periodic} argue that the Jacobi operator of a universal cover can be viewed as a specific element of a matrix $C^*$-algebra, and then use a standard K-theory argument which relies on a theorem of Pimsner and Voiculescu. However, these two proofs differ in the way in which the connection with the $C^*$-algebra is made.  The proof we present here uses random lifts and the fact that independent random permutation matrices are asymptotically free (an idea that has previously been exploited in \cite{bordenave2019eigenvalues} for different purposes), while the proof in  \cite{avni2020periodic} deals directly with the Jacobi operator on a concrete Hilbert space. Given the relevance of this result, we have decided to keep our alternative argument in this work, but we no longer state the result as ours.  Finally, as we are able to answer some questions left open in \cite{avni2020periodic}, we include these answers in Appendix \ref{subsec: bandexamples} of this revision. 

To the best of our knowledge, other than what is mentioned in the above paragraph, there is no further overlap between our work and \cite{avni2020periodic}. 

\subsection{Results}

\subsubsection{Universal Covers}

In this section we will consider a finite graph $\G$ with $n$ vertices, universal cover $\T$ and covering map $\Xi: \mathcal{T}\to \mathcal{G}$. The base graph $\calG$ will be equipped with edge weights $a$ and vertex potential $b$, and $\calT$ will be equipped with the induced periodic edge weights and vertex potential. 

Roughly speaking, the proofs of the results in this section use free probability in two different ways. 
\begin{enumerate}[(1)]
    \item  Free probability techniques are used to argue that periodic  Jacobi matrices on $\T$ can be represented as $n\times n$ matrices with entries in a certain $C^*$-algebra. For different applications it is convenient to use different $C^*$-algebras. 
    \item Once a given periodic Jacobi matrix $A_\calT$ is viewed as an element of a matrix $C^*$-algebra, we argue that such an element can be decomposed as a sum of simpler  operators that are free with amalgamation over the algebra $M_n(\C)$ (see Section \ref{secpreliminaries} for precise definitions). This decomposition allows the use of  tools  from   free probability  to understand the spectrum of $A_\calT$.     
\end{enumerate}
Regarding step (1), it is worth noting that once one has represented the Jacobi operator as an specific element of a matrix $C^*$-algebra, one might be able to find elementary proofs that show that such a representation is correct. 
However, one should appreciate that it is not clear a priori that this connection with $C^*$-algebras exists, and neither is it easy to 
"guess"  what the correct representation of the Jacobi operator on a given $C^*$-algebra is. It is for this reason that we decided to include the free probability arguments for step (1), since we believe they provide a conceptual way to arrive at the $C^*$-algebra representations. 

\paragraph{Spectral Radius.} Let $m$ be the number of undirected edges in $\G$  and let $\Gamma_m$ be the discrete group obtained by taking the free product of $m$ copies of $\mathbb{Z}_2$. Using our combinatorial graph product, which is discussed below in Section \ref{subsub:amalgamgraphprod}, we show that periodic Jacobi operators on $\T$ can be represented as elements in $M_n(\C)\otimes \cred(\Gamma_m)$  (see Section \ref{secpreliminaries} for definitions). We then use a theorem of Lehner  \cite{lehner1999computing} to prove the following min-max  formula for the spectral radius (actually, both spectral edges) of $A_\calT$. 

\begin{theorem}[Formula for the spectral radius]
\label{thm:radius}
Let $\calG$ be a graph with vertex set $[n]$,  edge weights $a$, and vertex potential $b$. Let $\calT$ be its universal cover with the induced periodic edge weights and vertex potential.  If $\rho_r(A_\T)$ \marginpar{$\rho_r$} denotes the right edge (i.e. maximum element) of $\Spec(A_\T)$ then
\begin{equation}
\label{eq:rho}
\rho_r(A_\mathcal{T}) = \inf_{y_1, \dots, y_n>0} \max_{i \in [n]} \bigg[ b_i + \frac{1}{2y_i} \bigg( 2-\deg(i)  + \sum_{\diredge\in \sigma(i)} \big(1+4a_{\diredge}^2 y_{\sigma(\diredge)} y_{\tau(\diredge)}\big)^{1/2} \bigg) \bigg].    
\end{equation}
where $\deg(i)$ is the degree of the vertex $i$ in $\mathcal{G}$. Moreover, the  infimum can be restricted to those $n$-tuples $(y_1, \dots, y_n)$ for which the $n$ expressions inside the $\max$ symbol are equal to each other. 
\end{theorem}

\begin{remark}[Left edge and spectral radius]
Using the fact that $-A_\calT$ is also a periodic Jacobi matrix on $\calT$, one may obtain a similar expression for the left edge $\rho_l$ of $\Spec(A_\calT)$. And to compute the  spectral radius $\spr(A_{\calT})$ use the trivial observation $\spr(A_{\calT}) = \max\{\rho_r, - \rho_l\}$. 
\end{remark}

\begin{remark}[The symmetric case]
In the case where $b_v = 0$ for all $v \in V(\G)$, the spectrum of $A_\T$ is symmetric about zero, because $\T$ is a bipartite graph. So, in this case the spectral radius equals $\rho_r=\rho_l$. 
\end{remark}

\begin{remark}[Algebraic expressions]
One can  use Lagrange multipliers on the above variational problem to find an explicit algebraic description of $\rho_r(\mathcal{T})$ from equation (\ref{eq:rho}); see Corollary \ref{cor:lagrange}.
\end{remark}

\paragraph{Aomoto's Equations.} For $u \in V(\mathcal{G})$, recall that $\mu_u$ denotes the spectral measure of $A_\calT$ associated to a vertex in $\Xi^{-1}[u]$.   We may then form the \emph{Cauchy transform} of this measure\marginpar{$w_u(z)$}:
\begin{equation}
\label{eq:rootedcauchydef}
w_u(z) \myeq  \int_\mathbb{R} \frac{1}{z-x} d \mu_{u}(x). 
\end{equation}
The Cauchy transform is also known as the \emph{Stieltjes transform}, and is closely related to the \emph{Green function} or \emph{resolvent} $(z - A_{\mathcal{T}})^{-1}$, as well as to the walk generating function $Q_{u}(z) = \frac{1}{z} w_{u}(1/z)$ which counts weighted closed walks on $\mathcal{T}$ based at a fixed $\tilde{u} \in \Xi$.  It is a standard fact in analysis that the spectral measure $\mu_{u}$ can be fully recovered from $w_u(z)$ via the \emph{Stieltjes inversion formula}.  

Using the operator-valued version of Voiculescu's $R$-transform,  we recover Aomoto's system of  equations for the $w_u(z)$ presented below in Theorem \ref{thm:system}. 

\begin{theorem}[Aomoto \cite{aomoto1988algebraic}] \label{thm:system}
Using the above notation,  the following system of equations holds: 
\begin{equation}
\label{eqn:system}
\begin{cases}w_u(z) = \dfrac{1}{2(z-b_u)} \left(2-\deg(u)+\displaystyle\sum_{\diredge \in \sigma(u)} \big(1+4a_{\diredge}^2 w_{\sigma(\diredge)}(z)w_{\tau(\diredge)}(z)\big)^{1/2}\right) & \forall u \in V(\calG)
\end{cases}
\end{equation}
for all $z \in \mathbb{C}$ in a neighborhood of infinity and for all real $z$ outside the convex hull of the spectrum $\Spec(A_\T)$.\footnote{Here $\deg(u)$ denotes the degree of the vertex $u$ in $\calG$, where loops contribute twice in the count. }  
\end{theorem}

\begin{remark}
Since in Theorem \ref{thm:system} we are restricting $z$ to be in  a neighborhood of infinity and Cauchy transforms vanish at infinity, the expressions inside the radicals  of (\ref{eqn:system}) are always on the right side of the complex plane, and hence the square roots are globally defined. Moreover, by analyzing the behaviour of the $w_u(z)$ at infinity, one sees that one should take the principal branch for each square root for the system of equations to hold.
\end{remark}

The above system of equations was first discovered by Aomoto  \cite{aomoto1988algebraic} using techniques from the literature of random walks on groups. See \cite[\S5 and \S6]{keller2013spectral}, \cite[\S4]{keller2014invitation} and \cite[\S 6]{avni2020periodic}   for a survey of similar results related to algebraicity of Cauchy transforms. In particular see \cite[\S 6]{avni2020periodic} for a discussion of the role of algebraicity in showing that  $A_\T$ has no singular continuous spectrum. 

In \cite{aomoto1991point} Aomoto used (\ref{eqn:system}) to obtain necessary conditions on $\mathcal{G}$ for the existence of point spectrum in $\Spec(A_\T)$. Here, in Appendix \ref{sec:usingthesystemofequations}, we take a brief detour to show that (\ref{eqn:system}) can be used to establish a connection between the behavior of the density of states at the right edge of $\Spec(A_\T)$ and the growth rate of $\T$. We prove the following. 

\begin{theorem}[Vanishing density at the right edge]
\label{thm:vanishingdensity}
Use the above notation and assume that $a_\diredge >0$ for all $\diredge \in E(\G)$.   Let $\dos$ be the density of states of $A_\calT$ and $\rho_r$ be  right edge of $\Spec(A_\calT)$.  Then $\dos$ is absolutely continuous in a neighborhood of $\rho_r$ and $\lim_{x\to \rho_r} \frac{d\dos}{dx}(x) =0$.
\end{theorem}

The behavior  of $\frac{d\dos}{dx}$ at the edge is tied to the type of singularity of the Cauchy transforms $w_u(z)$ at $z=\rho_r$. On the other hand, the latter have been extensively studied for many classes of infinite graphs (e.g. \cite{nagnibeda2002random, lalley2001random, woess2000random, gouezel2013random}), as an intermediate step to understand the asymptotic behavior of transition probabilities and escape rates of random walks. In particular, Theorem \ref{thm:vanishingdensity}  can be deduced from the results in \cite[\S 2]{nagnibeda2002random},  where the $w_u(z)$ are related to an auxiliary family of transforms,  and sophisticated methods (that seek to prove stronger results) from the theory of random walks on infinite graphs are used. The argument in  \cite[\S 2]{nagnibeda2002random} analyzes, as $z$ varies, the evolution of the Perron eigenvalue of the non-backtracking matrix of $\G$ \footnote{The non-backtracking matrix of $G$ is a non-Hermitian matrix whose rows and columns are indexed by the directed edges of $G$, see \cite{stark1996zeta}.} with entries weighted as a function of the auxiliary transforms. In contrast, our proof of Theorem \ref{thm:vanishingdensity}  is short and self-contained, and uncovers a nice relation between Aomoto's equations and the Perron eigenvalue of  $A_\G$.



\bigskip

\paragraph{Sunada's Theorem on the Band Structure.} Let $m$ be the number of undirected edges of $\G$ and let $\mathbb{F}_m$ denote the free group on $m$ generators. In \cite{bordenave2019eigenvalues}, large random lifts of graphs were studied in the context of free probability. In Section \ref{secnumberofbands} we recall from \cite{bordenave2019eigenvalues} how random lifts can be used to obtain a representation of $A_\T$ as elements in $M_n(\C)\otimes \cred(\mathbb{F}_m)$. This representation can be combined with a theorem of Pimsner and Voiculescu \cite{pimsner1982k} to conclude the following result:

\begin{theorem}[Sunada] \label{thm:mass}
  Using the notation above, the density of states of $A_\T$ assigns a positive integer multiple of $1/n$ to any connected component of the spectrum of $A_\T$.  Consequently, the spectrum of $A_\T$ contains at most $n$ connected components.
\end{theorem}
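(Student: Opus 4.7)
The plan is to view $A_\T$ as a self-adjoint element of the matrix $C^*$-algebra $\A := M_n(\mathbb{C}) \otimes C^*_r(\mathbb{F}_m)$, with $\tau_\T$ identified with the restriction of $\tau := \frac{1}{n}\Tr_n \otimes \tau_{\mathbb{F}_m}$. Once this identification is in place, the Pimsner--Voiculescu theorem will constrain the range of $\tau$ on projections, and continuous functional calculus will translate these constraints into constraints on $\mu_{A_\T}$.

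To build the representation, I would label the edges of $\G$ as $e_1, \dots, e_m$ (loops counted once), let $u_j \in C^*_r(\mathbb{F}_m)$ be the image of the $j$-th free generator under the left regular representation, and fix an arbitrary orientation on each non-loop edge. Define $X \in \A$ to be the self-adjoint matrix whose $(i,i)$ diagonal entry is $b_i \cdot 1$ plus a term $2 a_e (u_j + u_j^*)$ for each loop $e = e_j$ at vertex $i$, and whose off-diagonal entry at $(i,k)$ (for $i \ne k$) is $a_e u_j$ or $a_e u_j^*$ according to the chosen orientation of $e = e_j = \{i,k\}$. To verify that $X$ has the same $*$-distribution with respect to $\tau$ as $A_\T$ with respect to $\tau_\T$, I would approximate each $u_j$ by an independent Haar-random $k \times k$ permutation matrix: the resulting operator is exactly the Jacobi matrix of a uniformly random $k$-lift of $\G$, whose Benjamini--Schramm limit is $A_\T$, and the asymptotic freeness of independent random permutations (as exploited in \cite{bordenave2019eigenvalues}) matches its moments with those of $X$ in the limit.

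With the representation in place, the Pimsner--Voiculescu computation $K_0(C^*_r(\mathbb{F}_m)) \cong \mathbb{Z}$, with $(\tau_{\mathbb{F}_m})_*$ the standard inclusion $\mathbb{Z} \hookrightarrow \R$, combines with the stable isomorphism $K_0(\A) \cong K_0(C^*_r(\mathbb{F}_m))$ to give $\tau_*(k) = k/n$; hence every projection $p \in \A$ satisfies $\tau(p) \in \frac{1}{n}\mathbb{Z}$. For any clopen subset $S$ of $\sigma(A_\T)$, continuity of $\chi_S$ on $\sigma(A_\T)$ places $\chi_S(A_\T) \in C^*(A_\T) \subset \A$, so $\mu_{A_\T}(S) = \tau(\chi_S(A_\T)) \in \frac{1}{n}\mathbb{Z}$, and faithfulness of $\tau$ forces this to be strictly positive whenever $S \ne \emptyset$. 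Since any compact subset of $\R$ with $k$ connected components admits a clopen partition into $k$ pieces by cutting inside the gaps between successive components, the identity $\mu_{A_\T}(\sigma(A_\T)) = 1$ forces $k \le n$, and each component is itself such a piece and therefore carries a positive integer multiple of $1/n$.

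The main obstacle is the first step: producing $X$ and proving the trace-matching identity. While the construction of $X$ is natural once the right generating set of $\mathbb{F}_m$ has been chosen, the proof that $X$ faithfully encodes the moments of $A_\T$ rests on the asymptotic freeness of independent Haar-random permutation matrices, and requires some bookkeeping around loops and the orientations of non-loop edges. Once the representation is established, the K-theoretic and elementary-topology parts of the argument are essentially formal.
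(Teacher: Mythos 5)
Your proposal follows essentially the same route as the paper: the representation of $A_\T$ inside $M_n(\mathbb{C})\otimes C^*_{\mathrm{red}}(\mathbb{F}_m)$ is justified exactly as in Lemma \ref{lemmaunivcover} (random lifts converging in the Benjamini--Schramm sense plus Nica's asymptotic freeness of independent random permutations), and the conclusion is extracted exactly as in the paper's proof via Pimsner--Voiculescu integrality of traces of projections and continuous functional calculus applied to indicators of clopen pieces of the spectrum. The one slip is in your construction of $X$: a loop $e=e_j$ at vertex $i$ should contribute $a_e(u_j+u_j^*)$ to the $(i,i)$ entry, not $2a_e(u_j+u_j^*)$ --- the factor $2$ in the definition (\ref{eqn:jacobi}) of the Jacobi matrix is already realized by the two summands $u_j$ and $u_j^*$ (equivalently, by the two tree edges into which each whole-loop lifts), so your extra factor would break the moment-matching for base graphs with loops; everything else, including your use of faithfulness of the trace to get strict positivity of the mass of each component, is correct.
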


\begin{remark}[Tightness of the bound] \label{rem:tight} As mentioned above, if $\G$ is a tree then $\G \cong \T$, and hence, if $\G$ has distinct eigenvalues (e.g. $\G$ is a path and $A_\G$ is its adjacency matrix), Sunada's bound on the number of components is tight. A more interesting example of tightness is any finite graph with $\calG$ with distinct $b_v$ and $\sum_{\diredge \in E(\G)} a_\diredge < \min_{u \ne v} |b_u - b_v|$; see \cite[\S 10.2]{avni2020periodic}. 
\end{remark}

Once the $C^*$-algebra representation is obtained, the trick needed to reduce the proof of the above theorem to the theorem in \cite{pimsner1982k} is standard in $K$-theory and in the context of graph theory it was first used  by Aomoto and Kato in \cite{aomoto1988green}.  The upper bound on the number of bands of the spectrum of $A_\T$ is implicit in the work of Sunada \cite{sunada1992group}.  It was then highlighted and proved explicitly in  \cite{avni2020periodic}.  This technique was also used in \cite{kollar2019line} to establish an upper bound on the number of bands for certain infinite lattices.

In relation to questions regarding the number of bands in the spectrum of $A_\T$,  in Appendix \ref{subsec: bandexamples} we discuss the phenomenon of spectral splitting and  provide  answers to several questions in \cite{avni2020periodic} regarding possible extensions of theorems of Borg and Borg--Hochstadt. 

\subsubsection{Amalgamated Graph Products}
\label{subsub:amalgamgraphprod}


Inspired by a series of results of different authors \cite{obata2004quantum, accardi2004monotone, accardi2007decompositions} in which it is shown that each notion of non-commutative stochastic independence corresponds to a combinatorial graph product, we investigate the possibility of associating a graph product to the notion of freeness with amalgamation \cite[3.8]{voiculescu1992free}.  Recall that freeness with amalgamation is not an independence in the sense of Muraki or Speicher \cite{muraki2002five, speicher1997universal}, but it shares many desired properties with the five independences. 

With this purpose in mind we consider the following setting. Let $\mathcal{G}_1, \dots, \mathcal{G}_n$ be finite rooted graphs, and let $\mathcal{C}_1, \dots, \mathcal{C}_n$ be disjoint sets of colors. Assume each $\mathcal{G}_i$ is equipped  with  an edge coloring $c_i: E(\mathcal{G}_i) \to \mathcal{C}_i$. Let $\mathcal{C} = \bigcup_{i=1}^n \mathcal{C}_i$ and let $\mathcal{G}$ be a finite  graph with an edge coloring $c: E(\mathcal{G})\to \mathcal{C}$. 

In this work  we define a graph called the free product of $\mathcal{G}_1, \dots, \mathcal{G}_n$ with amalgamation over $\mathcal{G}$. This product can be viewed as a procedure to construct an infinite graph by iteratively copying the local structure of the $\mathcal{G}_i$ and where the way in which these neighborhoods are combined is dictated by the structure of the graph $\mathcal{G}$ and by how the colorings $c_i$ relate to $c$. 

Now, if $\calG$ is a weighted graph with vertex potential, then these can be lifted in a natural (periodic) way to the the graph $\mathcal{K}$  constructed through this procedure. The upshot here is that the Jacobi operator  $A_\mathcal{K}$ can be written as an amalgamated free convolution of much simpler non-commutative random variables. Hence, in this situation much of the machinery developed around freeness with amalgamation (e.g. \cite{lehner1999computing,speicher1998combinatorial,voiculescu1995operations, belinschi2019atoms}) can be used to understand the spectral measures of $A_\mathcal{K}$. It turns out that the amalgamated free product of graphs is general enough to be able to construct both
\begin{enumerate}[(i)]
    \item any Cayley graph of an amalgamated free product of groups, and
    \item any universal cover of a graph.
\end{enumerate}

\subsection{Structure of the Paper}

In Section \ref{sec:motivationandrelatedwork} we discuss  related work and motivate the study of the spectral properties of $A_\T$. The latter has already been done impeccably by Avni, Breuer and Simon in \cite{avni2020periodic}. So, to minimize redundancy, we very briefly survey some of the important results mentioned in \cite{avni2020periodic}, and  limit our detailed discussion to topics that were not touched upon in \cite{avni2020periodic}: the relevance of $\Spec( A_\T)$ in  the theory of relative expanders, with particular emphasis on the right and left edges of $\Spec( A_\T)$ and the role of random lifts in this context. We also discuss the connections between non-commutative probability and spectral graph theory and spectral results about Cayley graphs in the context of free probability.   

Section \ref{secpreliminaries} contains all the machinery from the theory of free probability that will be needed in the present work, as it is our hope that this paper be of interest to multiple audiences. For example, the reader who is well-acquainted with free probability may skip Section \ref{secpreliminaries}, but might find Section \ref{sec:motivationandrelatedwork} informative, while a reader coming from a background of spectral graph theory may want to skip Sections \ref{subsecuniversalcovers}-\ref{sec:spectralradii},  review parts of Section \ref{secpreliminaries} and proceed to the sections on the spectra of universal covers (Section \ref{secnumberofbands} and Section \ref{secalgebraicuniversal}). The subsequent sections draw from different parts of Section \ref{secpreliminaries} and are developed in a more or less parallel fashion. For this reason, the reader may jump straight to any of the sections that are of their interest and go back as needed. 

In Section \ref{secnumberofbands} we prove Theorem \ref{thm:mass} using asymptotic freeness of random permutation matrices and an argument from operator $K$-theory. The knowledge from free probability required for this section is contained in Section \ref{subsecfreeprobability}. 

In Section \ref{secgraphproduct} we develop the theory behind our graph product, which we call the amalgamated free graph product. This discussion is based on the construction of Hilbert bimodules given in Section \ref{subsecHilbertProd} and a good understanding of the content of Sections \ref{subsecfreeprobability}-\ref{subsecfreenesswithamalg} is recommended. 

In Section \ref{secalgebraicuniversal} we adopt an algebraic approach to the description of the spectral measure of  $A_\mathcal{T}$. The analysis of this section is based on interpreting $A_\mathcal{T}$ as an operator-valued  matrix with free entries. This interpretation coincides with the framework used by Lehner in \cite{lehner1999computing} and his results are used in the proofs of Theorem \ref{thm:system} and Theorem \ref{thm:radius}. 

In Section \ref{sec:future} future directions are discussed. \\

\textbf{Note for the Free Probability Expert.} Our  contribution to the theory of free probability is the definition of the amalgamated free product of graphs presented in Section \ref{secgraphproduct}. The generality of this graph product has the potential to allow a free probability approach in  contexts that go beyond what is discussed here. 

In some sense, our construction provides a combinatorial interpretation of  Voiculescu's amalgamated free product for Hilbert bimodules when the underlying algebra is $M_n(\C)$. Moreover, our discussion from Section \ref{sec:cayley-amalgamated} has the non-trivial implication that for groups $G_1, G_2$ with a  common finite subgroup $H$, operators in $C_{\text{red}}^*(G_1\ast_H G_2)$ that are free with amalgamation over $C_{\text{red}}^*(H)$, have explicit copies in distribution in an algebra of the form $M_n(\C)\otimes B(\mathcal{H})$ where $n$ is the order of $H$. This could facilitate numerical computations of amalgamated free convolutions.  

 Sections \ref{secnumberofbands} and \ref{secalgebraicuniversal} on the other hand contain  applications of well understood tools in free probability.  Their value resides in providing a new perspective to the independently interesting problem of understanding the spectral properties of $A_\T$. 

\subsection{Definitions and Conventions} 
\label{sec:defandconv}

Let us be precise about our notion of universal cover for graphs, which is not identical to the standard topological notion of universal covering space.  For this, let $\G$ be a finite connected graph. 

When it comes to loops in $\calG$ (i.e. edges $\diredge$ for which $\sigma(\diredge)=\tau(\diredge)$) it will be useful to follow Friedman's distinction between \emph{whole-loops} and \emph{half-loops} \cite{friedman1993}. A whole-loop is a pair of distinct directed edges $\diredge_1, \diredge_2$ with $\che_1 = \diredge_2$ and $\sigma(\diredge_1)=\tau(\diredge_1)=\sigma(\diredge_2)=\tau(\diredge_2)$, whereas a half-loop is composed of a single directed edge $\diredge$ satisfying $\che =\diredge$ and $\sigma(\diredge)=\tau(\diredge)$.   Whole-loop corresponds to the standard notion of loop used in graph theory.  We will occasionally allow half-loops, for example in the definition of universal cover below, but unless explicitly stated otherwise, in this work graphs will not be permitted to have half-loops, and ``loops'' will refer exclusively to whole-loops.

A \emph{non-backtracking walk} in $\calG$ is a sequence $\diredge_1, \dots, \diredge_m$ of edges in $E(\calG)$ such that $\tau(\diredge_i)=\sigma(\diredge_{i+1})$  and such that $\diredge_{i+1}\neq \che_i$ when $\diredge_i$ is not a loop (i.e. $\sigma(\diredge_i)\neq \tau(\diredge_i)$), and $\diredge_i\neq \diredge_{i+1}$ when $\diredge_i$ is a whole or half-loop.

\begin{definition}[Universal cover of a graph]
Let $\mathcal{G}$ be a finite undirected graph, possibly with half-loops, and fix a root $v_0 \in V(\mathcal{G})$. The universal cover of $\mathcal{G}$ is the tree $\mathcal{T}(\mathcal{G})$ constructed as follows: 
\begin{enumerate}[(1)]
    \item We place one vertex in $\mathcal{T}(\mathcal{G})$ for every non-backtracking walk in $\mathcal{G}$ starting at $v_0$ (this includes the empty walk). 
    \item We connect two vertices in $\mathcal{T}(\mathcal{G})$ by an edge if the walk corresponding to one of them can be obtained by appending an edge to the walk corresponding to the other. 
\end{enumerate}
\end{definition}

\begin{example}[Bouquets]
  Let $d \ge 0$.  The universal cover of a single vertex with $d$ half-loops is the $d$-regular tree, which is a single vertex for $d=0$, a single edge for $d = 1$, and infinite otherwise. On the other hand, the universal cover of a single vertex with $k$ whole-loops is the $2d$-regular tree.  
\end{example}


One may also define the notion of covering map for graphs (see e.g. \cite{friedman2019relativized}), and one has that any connected cover of $\mathcal{G}$ is covered by $\mathcal{T}(\mathcal{G})$. 
\bigskip

\section{Motivation and related work}
\label{sec:motivationandrelatedwork}

\subsection{Density of States of Operators on Universal Covers}
\label{subsecuniversalcovers}

 Studying the density of states of periodic Jacobi matrices on universal covers is a hard problem and only in specific cases can explicit results be obtained. Here we provide a quick overview of what we consider the most relevant results in the context of this paper. We refer the reader to \cite{avni2020periodic} for a broader and more detailed discussion.  

The density of states of the adjacency matrix of the universal cover of  $d$-regular graphs (i.e. the  $d$-regular tree with constant edge weights $a\equiv 1$ and constant vertex potential $b\equiv 0$) was first computed by Kesten \cite{kesten1959symmetric} in the context of Cayley graphs, then revisited by McKay \cite{mckay1981expected} in the context of random graphs.  Godsil  derived a formula for the density of states of the adjacency matrix of the universal cover of bipartite biregular graphs \cite{godsil1988walk}. In the case of $d$-regular graphs, but now allowing arbitrary edge weights (but still requiring $b\equiv 0$),  Fig\`a-Talamanca and Steger \cite{figa1994harmonic} provided a complete description of the spectrum of the lift of Jacobi matrices to the infinite $d$-regular tree. 

In the more general setting of arbitrary universal covers,  Aomoto \cite{aomoto1988algebraic} (see Theorem \ref{thm:system} above)  derived a system of coupled equations (with square roots) for the Cauchy transforms of the spectral measures of periodic Jacobi matrices. In a similar spirit,  polynomial coupled equations for certain transforms were then obtained by Avni, Breuer and Simon \cite{avni2020periodic} in the context of periodic Jacobi matrices on trees, and by Keller, Lenz and Warzel \cite{keller2013spectral} in the context of infinite trees of finite cone type (which are a family of infinite trees that contains universal covers \cite{keller2014invitation}).

In \cite{aomoto1991point} Aomoto used the coupled equations for the Cauchy transforms that he obtained in \cite{aomoto1988algebraic} to show that periodic Jacobi matrices on $d$-regular trees have no point spectrum. A result in a similar direction was later obtained in \cite{keller2013spectral}, where it was shown that under some conditions on the  base graph (that allow non-constant degree graphs),  the density of states of the adjacency matrix of the universal cover is absolutely continuous. Certainly, this result does not apply to all universal covers, since the density of states may contain atoms; see Table \ref{table1}. A  general result was obtained in \cite{avni2020periodic}, where it was  shown that periodic Jacobi matrices on trees  have no singular continuous spectrum. Finally, we point out the work of Sunada \cite{sunada1992group}, which was discussed above (see Theorem \ref{thm:mass}). 

 The reasons for studying the spectrum of operators on universal covers have varied  from author to author.   Our main motivation is that, as shown in \cite{bordenave2019eigenvalues}, the spectra of these infinite objects govern the behaviour of large random lifts of finite graphs, and on the other hand, random lifts have been instrumental for obtaining groundbreaking results on the existence of optimal (\emph{Ramanujan}) expanders \cite{marcus2013interlacing, hall2018ramanujan}. We refer the reader to Section \ref{subsecrandlifts} for a definition and discussion of random lifts. 

That said,  the complexity and variety of features  that one can observe when looking at the spectrum of different universal covers is appealing in its own right, and it is the source of many interesting spectral problems. To exemplify this, below we include some simulations and observations. 
\begin{table}[h]
    \centering
    \begin{tabular}{c c}
    \begin{tabular}{c c}
         \includegraphics[scale=3.6]{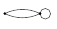} & \includegraphics[scale=.22]{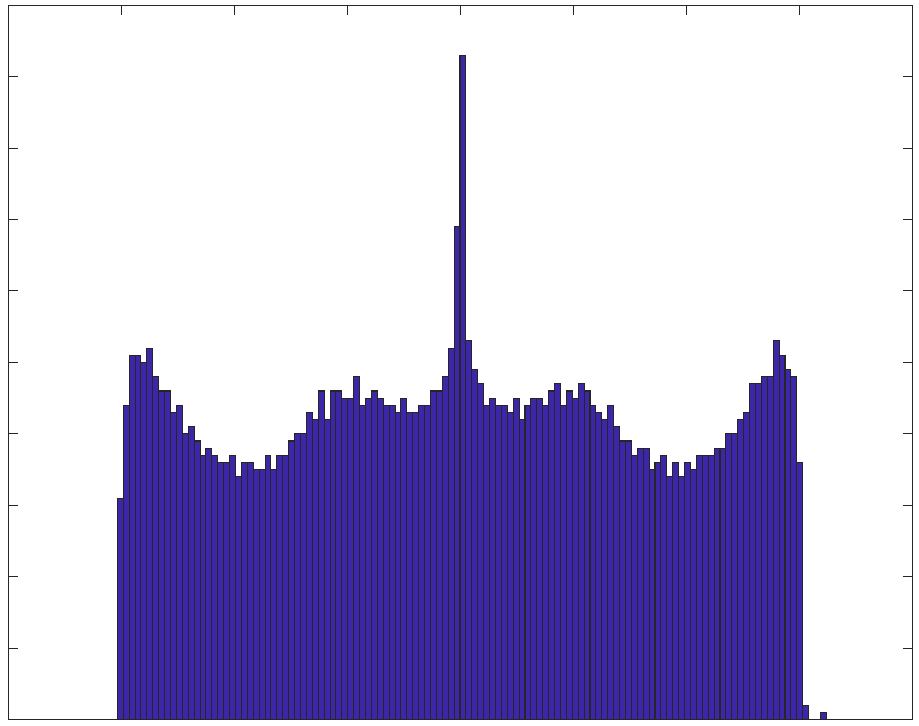}\\
           $\mathcal{G}_1$ &   $\mu_{\mathcal{T}_1}$  \\ \includegraphics[scale=.6]{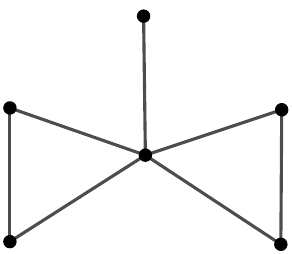} & \includegraphics[scale=.21]{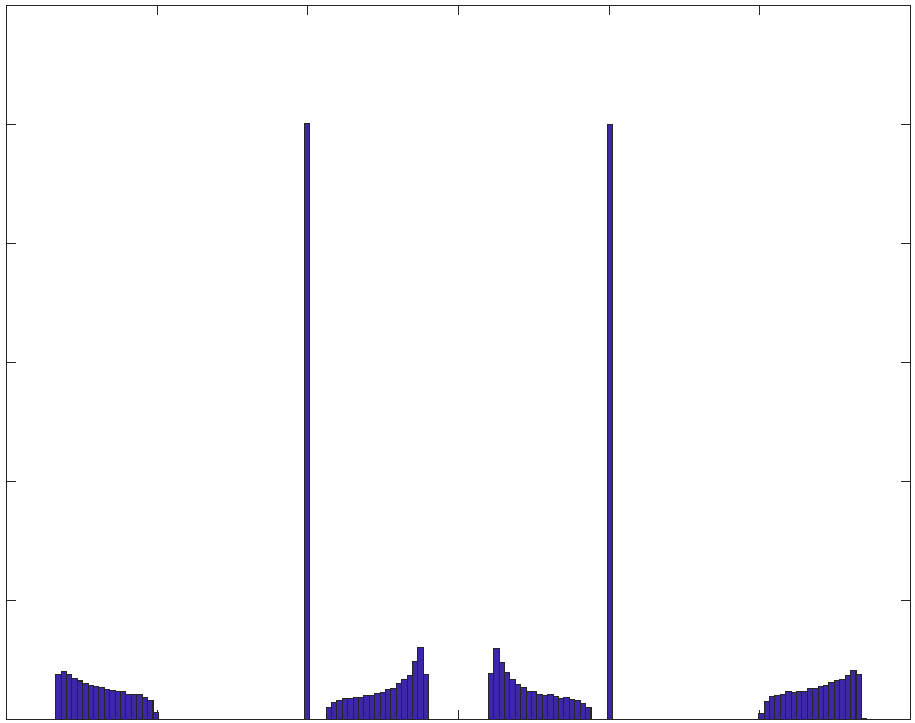}\\ 
             $\mathcal{G}_3$ &  $\mu_{\mathcal{T}_3}$ 
             
           \\ \includegraphics[scale=.58]{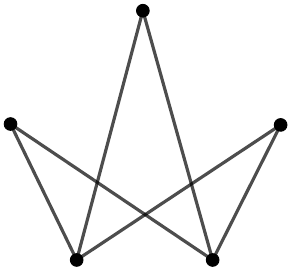} & \includegraphics[scale=.22]{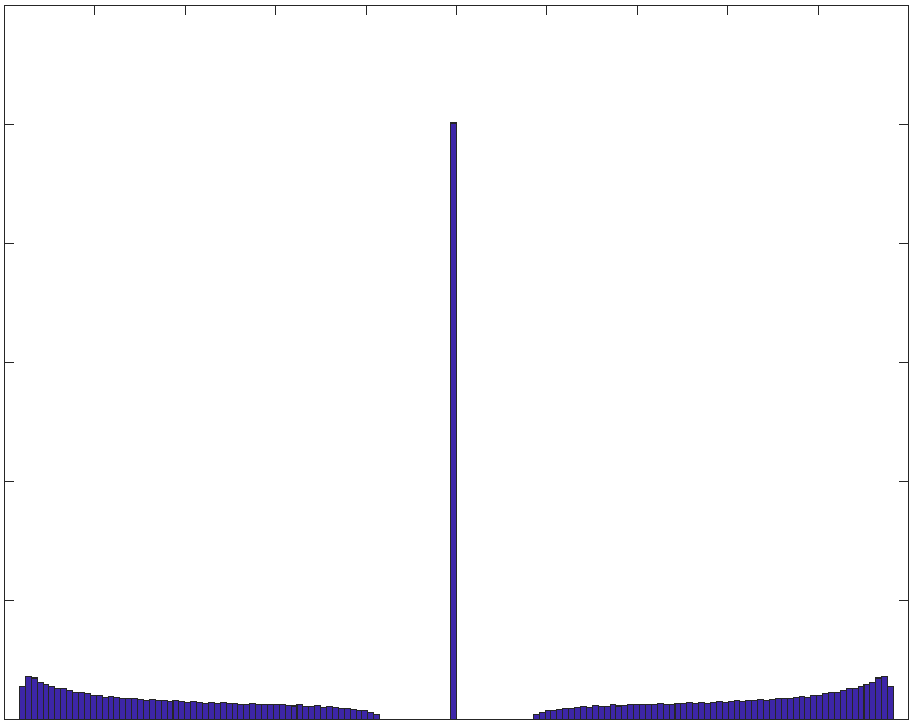} \\   $\mathcal{G}_5$ &   $\mu_{\mathcal{T}_5}$ 
    \end{tabular} &
     \begin{tabular}{c c}
        
        \includegraphics[scale=.63]{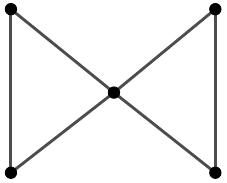} &
         \includegraphics[scale=.21]{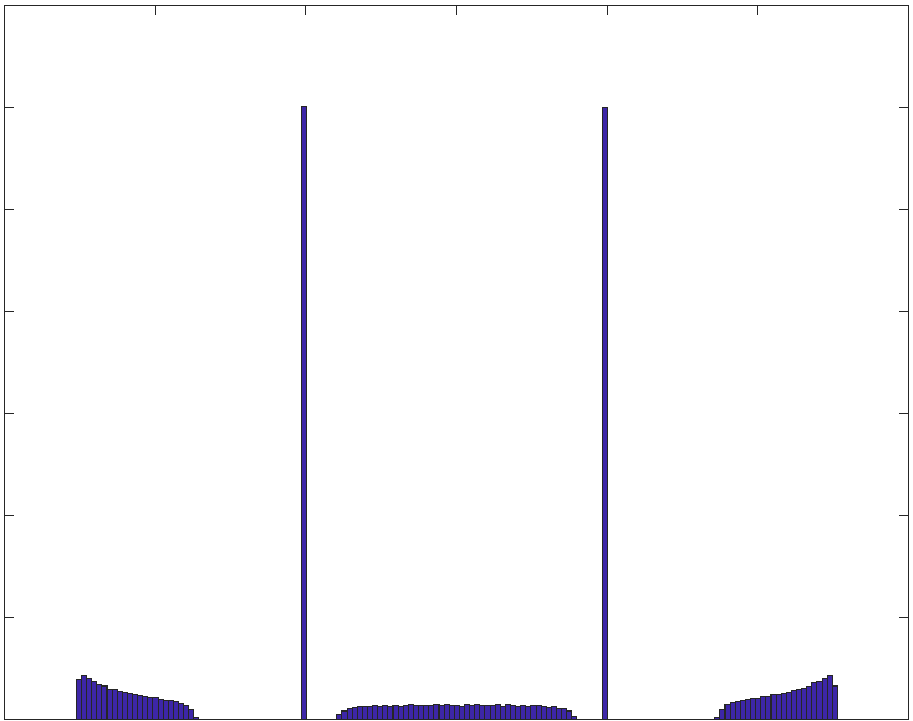} \\  
          $\mathcal{G}_2$ &  $\mu_{\mathcal{T}_2}$  
        \\ \includegraphics[scale=.6]{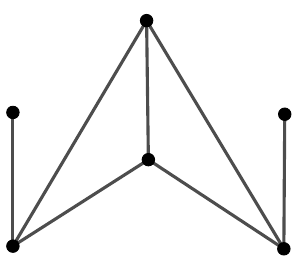} & \includegraphics[scale=.21]{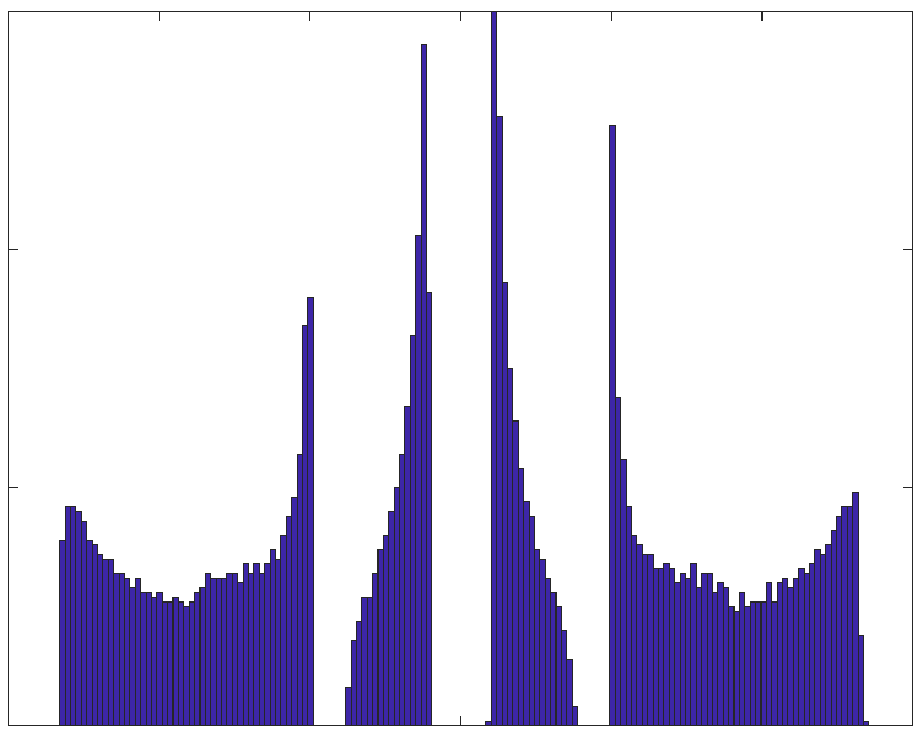} \\
         $\mathcal{G}_4$ &   $\mu_{\mathcal{T}_4}$    
        \\  \includegraphics[scale=.6]{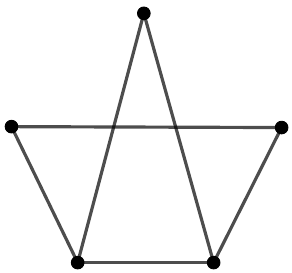} & \includegraphics[scale=.22]{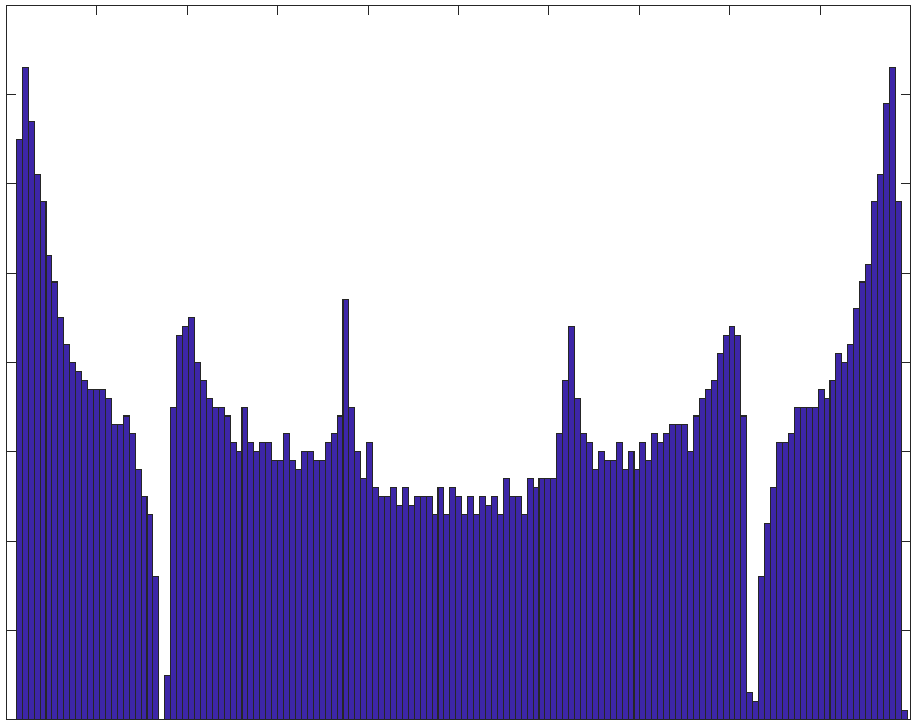}
        \\  $\mathcal{G}_6$ &   $\mu_{\mathcal{T}_6}$  
    \end{tabular}
    \end{tabular}
    \caption{Six graphs $\mathcal{G}_i$ alongside the respective approximation to $\mu_{\T_i}$, the density of states of the adjacency operator $A_{\T_i}$ (that is, $a\equiv 1$ and $b\equiv0$). The approximations presented are (scaled) histograms of the eigenvalues of a large random lift (taken as in Section \ref{subsecrandlifts}) of the respective graph. See Table \ref{tableradii} for the spectral radii of the $\T_i$.}
    \label{table1}
\end{table}

\begin{observation}
For simplicity let us consider only the adjacency operator (so $b_v = 0$ and $a_e = 1$ for all vertices $v$ and edges $e$.)  Table \ref{table1} shows that even if the base graphs are similar in some sense, the spectra of their universal cover may be quite different. In particular, we make the following remarks: 
\begin{enumerate}[i)]
    \item (Topological equivalence) $\mathcal{G}_1$ and $\mathcal{G}_2$ are homeomorphic and, in particular, they have the same fundamental group. Yet the spectra of the universal covers differ.
    \item (Eigenvalues of base graph) The graphs $\mathcal{G}_3$ and $\mathcal{G}_4$ are \emph{cospectral} (i.e. their non-weighted adjacency matrices have the same multiset of eigenvalues); however, the spectra of their universal covers possess very different features. 
    \item (Perturbations) $\mathcal{G}_3$ is obtained by adding a leaf to $\mathcal{G}_2$. In this case a gap in the spectrum around zero is created. From experiments, it seems adding leaves or edges can often cause major changes in the spectrum. 
    \item (Degrees) Graphs $\mathcal{G}_5$ and $\mathcal{G}_6$ have the same degree sequence. 
\end{enumerate}
\end{observation}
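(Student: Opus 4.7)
The plan is to decompose the Observation into its four remarks and verify the structural and spectral halves of each separately.

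The four structural claims are immediate from inspection of the figures in Table~\ref{table1}. Specifically, for (i), $\mathcal{G}_1$ is obtained from $\mathcal{G}_2$ by subdividing a single edge, so the two are homeomorphic as $1$-complexes and share a fundamental group; for (iii), $\mathcal{G}_3$ is obtained from $\mathcal{G}_2$ by attaching a pendant vertex; and for (iv), the vertex-degree multisets of $\mathcal{G}_5$ and $\mathcal{G}_6$ match by direct count. For the cospectrality claim in (ii), the unweighted adjacency matrices of $\mathcal{G}_3$ and $\mathcal{G}_4$ are small enough that one computes $\det(xI - A_{\mathcal{G}_3})$ and $\det(xI - A_{\mathcal{G}_4})$ by hand and checks that the two characteristic polynomials coincide.

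For the ``spectra differ'' assertions in (i), (ii), and (iv), the natural tool is Theorem~\ref{thm:radius}. Applying its Lagrange-multiplier reformulation (Corollary~\ref{cor:lagrange}) yields, for each base graph $\mathcal{G}_i$, an algebraic description of the right spectral edge $\rho_r(A_{\T_i})$ as the largest real root of an explicit polynomial system in a handful of variables (one $y_v$ per vertex, plus a Lagrange multiplier). I would compute these algebraic numbers and verify that within each of the pairs $(\mathcal{T}_1,\mathcal{T}_2)$, $(\mathcal{T}_3,\mathcal{T}_4)$, $(\mathcal{T}_5,\mathcal{T}_6)$ the two values disagree; a mismatch of spectral radii already suffices to conclude that the spectra are distinct. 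The precise numerical values are collected in Table~\ref{tableradii}, so this reduces to quoting those entries and observing pairwise inequality.

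For the gap-at-zero assertion in (iii), the right tool is Aomoto's system~(\ref{eqn:system}) applied to $\mathcal{G}_3$. Since $A_{\T_3}$ is self-adjoint, a point $z_0 \in \mathbb{R}$ lies in $\mathbb{R} \setminus \sigma(A_{\T_3})$ precisely when the tuple $(w_u(z))_{u \in V(\mathcal{G}_3)}$ extends real-analytically through $z_0$ with each $w_u(z_0) \in \mathbb{R}$. My plan is to eliminate all but one $w_u$ from the square-root system~(\ref{eqn:system}) using the structure of $\mathcal{G}_3$ (a short bookkeeping step, since $\mathcal{G}_3$ has few vertices), arriving at a single algebraic relation $P(z,w)=0$; then verify by a resultant computation that on an explicit open interval around $z=0$ the discriminant of $P$ in $w$ stays strictly positive, producing a real-analytic branch. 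Stieltjes inversion then forces $\mu_{A_{\T_3}}$ to vanish on that interval, which is precisely the observed gap.

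The main obstacle I anticipate is entirely computational: for graphs on $3$--$6$ vertices the polynomials produced both by the Lagrange analysis of Theorem~\ref{thm:radius} and by the elimination of (\ref{eqn:system}) are of moderate degree, and distinguishing spectral radii (respectively, discriminants) sharply enough to rule out accidental coincidence requires exact symbolic computation (Groebner bases or resultants) rather than floating-point numerics. Once these computations are certified, the Observation follows verbatim from the four structural inspections together with the algebraic data extracted from Theorems~\ref{thm:radius} and~\ref{thm:system}.
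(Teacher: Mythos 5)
The first thing to say is that the paper does not actually prove this Observation: it is presented as informal commentary on the numerical experiments of Table~\ref{table1} (the histograms are eigenvalue distributions of large random lifts, and item (iii) is explicitly hedged with ``From experiments, it seems\dots''). The only rigorously computed quantities backing it up are the spectral radii in Table~\ref{tableradii}, which the authors obtained exactly as you propose --- by solving the Lagrange system of Corollary~\ref{cor:lagrange} symbolically in Mathematica. So your plan for items (i), (ii), (iv) is not a different route from the paper; it is the paper's own route, upgraded from ``we ran the computation'' to ``we certify the computation,'' and the pairwise inequalities of the radii in Table~\ref{tableradii} ($\approx 3.0368$ vs.\ $\approx 2.5243$, $\approx 2.7012$ vs.\ $\approx 2.6589$, $1+\sqrt{2}$ vs.\ $\approx 2.4461$) do suffice to distinguish the spectra. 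The structural claims are indeed checked by inspection.

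Where your proposal goes beyond the paper is item (iii), and that is also where it has a genuine gap. Knowing that the discriminant of the eliminated polynomial $P(z,w)$ is nonvanishing on an interval around $z=0$ gives you \emph{some} collection of locally real-analytic branches of the algebraic curve there; it does not tell you that the branch actually realized by the Cauchy transform $w_u$ (defined operator-theoretically, and a priori only known to be analytic off $\sigma(A_{\T_3})$) is one of them, nor that it takes real values on that interval. Distinct branches of an algebraic function can be real on an interval while the physically relevant branch, continued from the upper half-plane, has nonzero imaginary part there --- which is exactly what happens inside a band. To close this you would need to track the correct branch from the neighborhood of infinity (where Theorem~\ref{thm:system} pins it down) along the real axis into the putative gap, e.g.\ by a connectedness/monodromy argument or by exhibiting explicit real solutions of the system~(\ref{eqn:system}) at $z=0$ together with an argument that the operator-defined $w_u$ must coincide with them. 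Alternatively, a resolvent-style argument in the $C^*$-algebra (in the spirit of the two-band computations of Section~\ref{subsec: bandexamples}, where invertibility of an operator-valued matrix is checked directly) would sidestep branch selection entirely. As written, the discriminant computation alone does not establish the gap.
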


\subsection{Random Lifts}
\label{subsecrandlifts}

A random $d$-lift of a graph is a random $d$-fold cover of the graph with a particular choice of randomness. Recently, random $d$-lifts  have been studied in the context of expander graphs. For example, random 2-lifts were used by Marcus, Spielman and Srivastava to show the existence of Ramanujan graphs of every degree  \cite{marcus2013interlacing}; these techniques were later generalized by Hall,  Puder and Sawin  \cite{hall2018ramanujan} to $d$-lifts for $d \ge 2$.  

\begin{definition}[Random lift] \label{def:randlift}
Let $\mathcal{G}$ be a finite graph with $n$ vertices and let $d \ge 1$. Let $\calU(d)$ be the unitary group of dimension $d$ and consider a random function $U: E(\calG) \to \calU(d)$ that satisfies for all $\diredge$ that: \begin{enumerate}[i)]
    \item (Symmetry) $U_{\che} = U_\diredge^*$. 
    \item (Uniformly distributed) $U_\diredge$ is a random matrix distributed uniformly in the space of $d\times d$ permutation matrices. 
    \item (Independence) If $\diredge_1\neq \diredge_2 $ and  $\diredge_1\neq \che_2$, then the random matrices $U_{\diredge_1}$ and $U_{\diredge_2}$ are independent. 
\end{enumerate}
Then, a random $d$-lift of $\G$ is a random graph with $d n$ vertices, whose adjacency matrix is given by 
\[ \sum_{\diredge \in E(\G)} \Delta_{\sigma(\diredge) \tau(\diredge)} \otimes U_{\diredge} \]
where $\edge_{\sigma(\diredge)\tau(\diredge)}$ denotes the $n\times n$ matrix with a 1 in the $(\sigma(\diredge), \tau(\diredge))$ entry and 0 everywhere else. Moreover, if $\calG$ has edge weights $a$ and vertex potential $b$, then the Jacobi matrix $A_\calG$ can be pulled back to the (random) Jacobi  matrix $A_{d, \calG}$ on the lift, given by 
$$A_{d, \calG} :=\sum_{u\in V(\calG)} b_u \Delta_{uu} \otimes I_d+ \sum_{\diredge\in E(\calG)} a_\diredge \Delta_{\sigma(\diredge)\tau(\diredge)} \otimes U_\diredge.$$
\end{definition}


It  is well known (and easy to show)  that as $d$ goes to infinity, random $d$-lifts of a fixed graph $\mathcal{G}$ converge in the Benjamini-Schramm sense \cite{benjamini2011recurrence} to the universal covering graph $\mathcal{T}$ of $\mathcal{G}$\footnote{The main idea is that if $\alpha$ is a random permutation of $[d]$, then the size of the orbit of any element on which $\alpha$ acts almost surely goes to infinity as $d\to \infty$. This is enough to show that for every fixed $p\geq 1$ the $p$-neighborhood of any given vertex of the random lift is almost surely tree-like as $d\to \infty$. A quantitative version of this argument can then be used to show that this holds for many vertices at the same time. }. In particular, this implies that the density of states of $A_\mathcal{T}$ is the weak limit of the mean eigenvalue distribution of the random $d$-lifts: 

\begin{lemma}[Limits of random lifts] 
\label{lem:graphconvergence}
Using the above notation, for every fixed $p$ one has 
$$\lim_{d\to \infty} \frac{1}{d} \mathbb{E}\big [\mathrm{Tr}\, A_{d, \calG}^p\big] = \int_\mathbb{R} x^p d\dos(x). $$
\end{lemma}



Recently, using tools from free probability, Bordenave and Collins viewed the limiting operator $A_\calT$ as an element of a certain $C^*$ algebra and showed that the convergence above holds in a much stronger sense, implying convergence of the edges of the spectrum \cite{bordenave2019eigenvalues}. In Section \ref{sec:sunadastheorem} we revisit in detail this $C^*$-algebra representation and use it give a proof of Sunada's theorem.  The theorem of Bordenave and Collins also handles half-loops $\diredge$, for which $U_\diredge$ is defined to be a random matching (see \cite{friedman2019relativized} for a discussion of related models of random lifts), we also revisit this idea in Section \ref{sec:halfloops} and use it obtain an extension of Sunada's theorem in the presence of half-loops. 
\subsection{Spectral Radii of Operators on Universal Covers}
\label{sec:spectralradii}

In this discussion we fix a finite graph $\mathcal{G}$ and let $A_\G$ denote its adjacency matrix (that is $a\equiv 1$ and $b\equiv 0$), and we denote the universal cover of $\G$ by $\mathcal{T}$.  We denote the spectral radius of $A_\mathcal{G}$ and $A_\T$ by $\spr(\mathcal{G})$ and $\spr(\T)$ respectively. \marginpar{$\spr(\mathcal{G})$}  

The  quantity $\spr(\mathcal{T})$ is fundamental in the theory of Ramanujan graphs \cite{lubotzky1988}. Indeed, in general, a graph $\mathcal{G}$ is said to be Ramanujan if $\Spec(\mathcal{G})$ is contained in $[-\spr(\mathcal{T}), \spr(\mathcal{T})]\cup \{-\spr(\mathcal{G}), \spr(\mathcal{G})\}$ \cite{greenberg1995spectrum}; and many of the results in the area are  stated in terms of $\spr(\mathcal{T})$.  For example,  the main result in \cite{marcus2013interlacing} states that any graph $\mathcal{G}$ has a 2-lift $\mathcal{G}'$ such that the new eigenvalues of $A_{\mathcal{G}'}$ are bounded above by $\spr(\mathcal{T})$, while its generalization in \cite{hall2018ramanujan} shows that the same is true for $n$-lifts for every $n\geq 2$. Similar techniques have been used in \cite{mohanty2019x} to obtain analogous results for quotients of a class of infinite graphs that goes beyond universal covers, where the results are also given in terms of the spectral radius of the given infinite graph. We refer the reader to \cite{sy1992discrete, huang2019local} for discussions on the relation  between  $\spr(\G)$ and $\spr(\T)$.

In most cases $\spr(\mathcal{T})$  is mentioned  only as an implicit quantity and no quantitative bounds on it are provided. When $\mathcal{G}$ is $d$-regular, by the work of Kesten \cite{kesten1959symmetric} we know that $\spr(\mathcal{T}) = 2\sqrt{d-1}$. If $\mathcal{G}$ is a $(c, d)$-biregular bipartite graph, by Godsil \cite{godsil1988walk} we know that $\spr(\mathcal{G}) =\sqrt{c-1}+\sqrt{d-1}$. In the case of general graphs, Hoory \cite{hoory2005lower} proved that if $d_{\mathrm{avg}}(\mathcal{G})$ is the average degree of $\mathcal{G}$ then 
\begin{equation}
\label{eq:rholower}
 \spr(\mathcal{T}) \geq 2\sqrt{d_{\mathrm{avg}}(\mathcal{G})-1}.
\end{equation}
On the other hand, an upper bound can be obtained trivially by noting that if $d_{\max}(\mathcal{G})$ is the maximum degree of $\mathcal{G}$, then $\mathcal{T}$ can be embedded in the infinite $d_{\max}(\mathcal{G})$-regular tree and hence 
\begin{equation}
\label{eq:rhoupper}
2\sqrt{d_{\max}(\mathcal{G})-1} \geq \spr(\mathcal{T}).
\end{equation}
Therefore, if $\mathcal{G}$ is a regular graph we have $d_{\mathrm{avg}}(\mathcal{G}) = d_{\max}(\mathcal{G})$, so by putting together  (\ref{eq:rholower}) and (\ref{eq:rhoupper}) the formula of Kesten is recovered. 

It is hard to find any explicit formula for $\spr(\mathcal{T})$ that only depends on the adjacencies in $\mathcal{G}$ and not, for example, on the paths in $\mathcal{G}$ or the powers of $A_\mathcal{G}$. This is due in part to the fact that two similar base graphs may have universal covering trees with fairly different spectral radii. In the table below we used the system of equations in Corollary \ref{cor:lagrange} to compute the spectral radii of the respective universal covering trees in Table \ref{table1}. 
\begin{table}[h] 
    \centering
    \begin{tabular}{lr} \toprule
     Base graph $\G$ & $\spr(\T(\G))$ \\  \midrule
         $\G_1$ & $ \approx 3.0368$ \\
          $\G_2$ & $\sqrt{ \frac{1}{2}(7 + \sqrt{33})} \approx  2.5243$ \\
          $\G_3$ & $\sqrt{\frac{1}{2}\left(6+\sqrt{5}+\sqrt{27+6\sqrt{5}}\right)}\approx 2.7012$ \\
          $\G_4$ & $\approx 2.6589$ \\
          $\G_5$ & $1+\sqrt{2} \approx 2.4142$ \\
          $\G_6$ & $\approx 2.4461$ \\
          \bottomrule
    \end{tabular}
\caption{Using Mathematica, the system of equations in Corollary \ref{cor:lagrange} was solved for the graphs in Table \ref{table1}.  In some cases explicit solutions in radicals were output. Previous results on the 
biregular bipartite case mentioned above imply the result for $\G_5$.  }
\label{tableradii}
\end{table}

\subsection{Graph Products and Non-commutative Probability}
\label{sec:graph-products}
In recent years different problems in spectral graph theory have been approached from the perspective of non-commutative probability theory; we refer the reader to the book of Hora and Obata \cite{hora2007quantum} for a unified exposition. Of particular interest for the present work are a sequence of results \cite{obata2004quantum, accardi2004monotone, accardi2007decompositions} which establish a correspondence between different combinatorial graph products and different notions of stochastic independence in non-commutative probability (see Example \ref{excartesianprod} below).  We summarize this correspondence in the  dictionary presented in the table below. As mentioned before, part of the motivation of this project is to extend this dictionary to include the notion of freeness with amalgamation\footnote{Graph products related to the notion of amalgamation in group theory, such as the one appearing in \cite{mohar2006tree}, have appeared in the past. However, these unrelated products were introduced with the purpose of studying symmetries in graphs and it is not clear if they relate to spectral theory.   }. 
\begin{table}[h] 
\centering
\begin{tabular}{ ll } 
\toprule
 Graph product & Notion of independence \\
\midrule
 Cartesian & Classical (tensor) \\ 
 Free & Free \\ 
 Comb & Monotone \\ 
 Star & Boolean \\ 
\bottomrule
\end{tabular}
\caption[Table 1]{On the left, graph products are mentioned (see \cite{accardi2004monotone} for  definitions). On the right, the corresponding notions of stochastic independence are given (see \cite{speicher1993boolean, muraki2001monotonic} for definitions of Boolean and Monotone independence, respectively.)  }
\label{table}
\end{table}

\begin{example}[Cartesian product]
\label{excartesianprod}
Given two graphs $\mathcal{G}_1$ and $\mathcal{G}_2$, one may form their \emph{Cartesian product} $\mathcal{G}_1\, \Box\, \mathcal{G}_2$, with vertex set and edge set as follows:
\[ V( \G_1 \, \Box \, \G_2) = V(\G_1) \times V(\G_2) \]
\[ E( \G_1 \, \Box \, \G_2) = \{\{(v, w_1), (v, w_2)\} : \{w_1, w_2\} \in E(\G_2)\} \cup \{\{(v_1, w), (v_2, w)\} : \{v_1, v_2\} \in E(\G_1)\}.\]
In other words, we have the relation of adjacency matrices $A_{\G_1 \, \Box \, \G_2} = A_{\G_1} \otimes I + I \otimes A_{\G_2}$.  Using this representation, one sees that the spectrum of $\mathcal{G}_1\, \Box\, \mathcal{G}_2$ is the Minkowski sum $\{\lambda_1 + \lambda_2 : \lambda_1 \in \Spec(\G_1), \lambda_2 \in \Spec(\G_2)\}$. In the language of spectral measures, this says that the density of states $\mu$ of $A_{\G_1\,\Box\,\G_2}$ is the (classical) convolution of the density of states for the constituent graphs; that is, $\mu$ is the law of the sum of two independent random variables distributed according to the density of states of $A_{\G_1}$ and of $A_{\G_2}$, respectively. 
\end{example}
In this work,  the free graph product  is of particular interest. The connections of this product to free probability were developed by Accardi, Lenczewski and Sa\l{}apata \cite{accardi2007decompositions}. The authors pointed out that Voiculescu's notion of free independence introduced in \cite{voiculescu1985symmetries} could be used when analysing the spectrum of free products of graphs. As an example provided in their paper, one can recover the spectral measure of the $d$-regular infinite tree, as the \emph{free convolution} of $d$ signed Bernoulli distributions (also known as Rademacher distributions).  

However, the roots of the theory of free graph products go back to Kesten \cite{kesten1959symmetric}, who studied the spectra of random walks on free groups.  We survey this area in the next subsection.

Finally, we draw attention to the \emph{additive graph product} recently defined by Mohanty and O'Donnell \cite{mohanty2019x}, who showed that $X$-\emph{Ramanujan} graphs (a generalized notion of Ramanujan graph) can always be obtained by taking certain quotients of any graph constructed via their product. In the aforementioned work the features of the spectrum of the resulting infinite graphs are left as implicit quantities. Hence, a natural complementary line of research would then be that of understanding the spectrum of the infinite graphs arising from the additive graph product.      

\subsection{Random Walks on Groups and Free Probability}

The '80s and '90s saw a flurry of activity on the spectra of random walks on Cayley graphs of free products of groups; see \cite{woess2000random} for a detailed exposition.  The analytic formula relating the spectral measure of the product graph to the spectral measures of its factors  (essentially, Voiculescu's $R$-transform) was discovered independently by McLaughlin \cite{mclaughlin1988random}, Soardi \cite{soardi1986resolvent}, and Woess \cite{woess1986nearest},  but it was Voiculescu's work \cite{voiculescu1985symmetries} that put it into the more general context of non-commutative probability.

Interestingly,  also in \cite{voiculescu1985symmetries}, Voiculescu  laid out a more general theory of \emph{freeness with amalgamation}, which extends the scalars $\mathbb{C}$ to an arbitrary unital algebra $\mathcal{B}$.  On the other hand, in a parallel way, some progress was also made by the graph theory community in the study of random walks on amalgams \cite{picardello1985random}. In particular Cartwright and Soardi \cite{cartwright1986random} developed the combinatorial tools to obtain the Green function of the Cayley graph of an amalgamated free product of finite groups, in the particular case in which the subgroup over which amalgamation is performed is a normal subgroup of the groups in the product. In Section \ref{subsecfreenesswithamalg} we explain why the tools of free probability allow one to approach this problem even if the normality assumption is dropped.

\section{Preliminaries on Free Probability}
\label{secpreliminaries}

In this section we describe the tools from the theory of free probability that will be used throughout this preprint. A basic background on $C^*$-algebras is recommended, but not necessary for all of the following discussion. We refer the reader to \cite{davidson1996c} for an introduction to $C^*$-algebras.

\subsection{Free Probability}
\label{subsecfreeprobability}
 Free probability was introduced by Dan-Virgil Voiculescu in his seminal papers \cite{voiculescu1985symmetries, voiculescu1986addition}. We refer the reader to \cite{voiculescu1992free, nica2006lectures, mingo2017free} for a detailed introduction. This theory is developed in the context of non-commutative probability, in which  random variables are viewed as elements of a non-commutative algebra and the notion of expectation from classical probability is substituted by a linear functional on the algebra. 
 
 \begin{definition}[Non-commutative probability space]
A non-commutative probability space is a pair $(\mathcal{A}, \func)$ where $\mathcal{A}$ is a unital $\mathbb{C}$-algebra and $\func: \mathcal{A} \to \mathbb{C}$ is a unital linear map. 
\end{definition}

In this work the following examples of non-commutative probability spaces are of primary importance. 

\begin{example}
 The pair $\big(M_n(\bC), \frac{1}{n} \Tr(\cdot) \big)$ is a non-commutative probability space.  
\end{example}

\begin{example}
Let $G$ be a discrete group and denote the reduced $C^*$-algebra of $G$ by $C_{\mathrm{red}}^*(G)$\marginpar{$C_{\mathrm{red}}^*(\cdot)$}.\footnote{In other words, $C_{\mathrm{red}}^*(G)$ is the norm closure in $B(\ell^2(G))$ of the left regular representation of $G$ on $\ell^2(G)$. See \cite[\S 7]{davidson1996c}.  } Then the $C^*$-algebra $C_{\mathrm{red}}^*(G)$ has a canonical state given by 
\begin{equation}
\label{eqcanonicaltracegroup}
\func(x) \myeq \langle  \delta_e, x \delta_e \rangle, \quad \forall x \in C_{\mathrm{red}}^*(G),
\end{equation}
where $\delta_e \in \ell^2(G)$ denotes the indicator of the identity element $e\in G$. 
\end{example}

As Voiculescu showed, in non-commutative probability there is not a unique notion of stochastic independence. 

\begin{definition}[Free independence] \label{def:freeindependence}
Let $(\mathcal{A},\func )$ be a non-commutative probability space, and let $\{\mathcal{A}_i\}_{i\in I}$ be a family of unital subalgebras of $\mathcal{A}$.  We say that the algebras $\mathcal{A}_i$ are  \emph{freely independent} (or just \emph{free}) if $\func(a_1 \cdots a_m) =0$ whenever $a_i \in \mathcal{A}_{j_i}$, $j_1\neq j_2\neq \cdots \neq j_m$\footnote{Here and throughout, we use this shorthand to mean $j_i \ne j_{i+1}$ for all $1 \le i < m$.} and $\func(a_i)=0$. Sets of random variables are said to be freely independent if the algebras they generate are free. 
\end{definition}

In the proof of Theorem \ref{thm:mass} we will use the following. 

\begin{proposition}[Voiculescu]
\label{propfreegenerators}
Let $g_1, \dots, g_m $ be the $m$ canonical generators of $\mathbb{F}_m$\marginpar{$\mathbb{F}_m$}\footnote{Here, $\mathbb{F}_m$ denotes the free group on $m$ generators. } and $\lambda$ be the left regular representation of $\mathbb{F}_m$ on $\ell^2(\mathbb{F}_m)$. Then, the random variables $\lambda(g_1), \dots, \lambda(g_m)$ are free in the non-commutative probability space $(C_{\mathrm{red}}^*(\mathbb{F}_m), \func)$, where $\func$ is as in (\ref{eqcanonicaltracegroup}). 
\end{proposition}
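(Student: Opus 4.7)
The plan is to exploit the explicit description of the regular representation in the group basis $\{\delta_g\}_{g \in \mathbb{F}_m}$ and to show that the freeness condition reduces to the fact that a word in the free group consisting of alternating nonzero powers of distinct generators is automatically a reduced, hence nontrivial, word.

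First, I would pin down the $*$-algebra $\mathcal{A}_i$ generated by the unitary $\lambda(g_i)$. Since $\lambda(g_i)^{*} = \lambda(g_i^{-1})$, this algebra is the linear span of $\{\lambda(g_i^k) : k \in \mathbb{Z}\}$. Moreover, by definition of $\tau_e$,
\[
  \tau_e(\lambda(g_i^k)) \;=\; \langle \delta_e, \delta_{g_i^k} \rangle \;=\; \delta_{k,0},
\]
so an element $a = \sum_k c_k \lambda(g_i^k) \in \mathcal{A}_i$ is centered (i.e.\ $\tau_e(a) = 0$) precisely when its ``constant term'' $c_0$ vanishes. In particular, any centered element of $\mathcal{A}_i$ is a linear combination of the monomials $\lambda(g_i^k)$ with $k \neq 0$.

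Second, by multilinearity of the product and of $\tau_e$, verifying Definition \ref{def:freeindependence} for the family $(\mathcal{A}_{j})_{j=1}^{m}$ reduces to the case where each factor is a single monomial $a_k = \lambda(g_{j_k}^{e_k})$ with $e_k \neq 0$ and $j_1 \neq j_2 \neq \cdots \neq j_n$. Because $\lambda$ is a group homomorphism into $B(\ell^2(\mathbb{F}_m))$, the product telescopes:
\[
  a_1 a_2 \cdots a_n \;=\; \lambda\!\left(g_{j_1}^{e_1} g_{j_2}^{e_2} \cdots g_{j_n}^{e_n}\right).
\]

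Third, the key observation: the word $w = g_{j_1}^{e_1} g_{j_2}^{e_2} \cdots g_{j_n}^{e_n}$ is already in reduced form, since consecutive letters involve distinct generators and every exponent is nonzero. Thus $w \neq e$ in $\mathbb{F}_m$, and therefore
\[
  \tau_e(a_1 \cdots a_n) \;=\; \langle \delta_e, \delta_{w} \rangle \;=\; 0,
\]
as required.

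There is no real obstacle here beyond performing the reduction cleanly; the content of the statement is essentially that the very definition of free independence was engineered to encode the combinatorics of reduced words in the free group. The only mild subtlety is making sure one works with the $*$-algebras generated by the $\lambda(g_i)$ (so that inverses are available) rather than the unital algebras generated without involution, but this is automatic once the generators are recognized as unitaries and the reduction to monomials above is carried out.
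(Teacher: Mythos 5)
Your proof is correct, and it is the standard (essentially Voiculescu's original) argument: reduce by multilinearity to monomials $\lambda(g_{j_1}^{e_1})\cdots\lambda(g_{j_n}^{e_n})$ with nonzero exponents and alternating generators, observe the product is $\lambda(w)$ for a reduced, hence nontrivial, word $w$, and conclude $\tau_e(\lambda(w))=0$. The paper states this proposition as a known result of Voiculescu and gives no proof, so there is nothing to compare against; your write-up would serve as a complete proof of the algebraic statement as the paper uses it.
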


Note that in Proposition \ref{propfreegenerators} the random variables $\lambda(g_i)$ are unitaries and satisfy that 
\begin{equation}
\label{eqhaarunitary}
\func(\lambda(g_i)^k) =0 \quad \forall k \in \mathbb{Z}\setminus \{0\}. 
\end{equation}
In non-commutative probability, random variables satisfying (\ref{eqhaarunitary}) are called \emph{Haar unitaries. }

\subsection{Operator-valued Probability Spaces}

We will require the following generalization of the notion of non-commutative probability space.

\begin{definition}[Operator-valued probability space] A triple $(\mathcal{A}, E, \mathcal{B})$ is called an operator-valued probability space if $\mathcal{A}$ is a unital algebra, $\mathcal{B} \subset \mathcal{A}$ is a subalgebra of $\mathcal{A}$ with $1_\mathcal{A}\in \mathcal{B}$,  and $E: \mathcal{A} \to \mathcal{B}$ is a conditional expectation, i.e. $E$ is a linear map satisfying 
\begin{enumerate}[i)]
    \item $E \restriction_{\mathcal{B}} = \mathrm{Id}_\mathcal{B}$ , where $E \restriction_{\mathcal{B}}$ denotes the restriction of $E$ to the subdomain $\mathcal{B} \subset \mathcal{A}$. 
    \item $E[b_1a b_2] = b_1 E[a] b_2$ for every $a \in \mathcal{A}$ and $b_1, b_2 \in \mathcal{B}$.
\end{enumerate}
Often, to emphasize the role of $\mathcal{B}$ we will say that $(\mathcal{A}, E, \mathcal{B})$ is a $\mathcal{B}$-valued probability space. 
\end{definition}

In the present we are interested in the following  situations. 

\begin{example}[Matrices with entries in the algebra]
\label{exmatrixopval}
Let $(\mathcal{A}, \func)$ be a non-commutative probability space. Then, the algebra $M_n(\mathbb{C})\otimes \mathcal{A}$ (i.e. $n\times n$ matrices with entries in $\mathcal{A}$) has a canonical copy of $M_n(\mathbb{C})$ as a subalgebra, namely $M_n(\mathbb{C})\otimes 1_\mathcal{A}$. Hence, $M_n(\mathbb{C})\otimes \mathcal{A}$  can be viewed as an  $M_n(\mathbb{C})$-valued probability space with the conditional expectation $ \mathrm{Id}_{M_n(\mathbb{C})} \otimes \func $. In other words, if $X \in M_n(\mathbb{C})\otimes \mathcal{A}$ is given by $X= (x_{ij})_{i, j=1}^n$ then $E$ defined by
\begin{equation}
\label{eqconditionalexp}
E(X) \myeq (\func(x_{ij}))_{i,j=1}^n \in M_n(\mathbb{C}), 
\end{equation}
is a $M_n(\mathbb{C})$-valued conditional expectation. 

\begin{example}[Group $C^*$-algebras]
\label{exreducedopvalprobaspace}
 Let $H$ be a subgroup of $G$, and denote by $\mathcal{B}$ the canonical copy of $C_{\mathrm{red}}^*(H)$ inside $C_{\mathrm{red}}^*(G)$. Then, we can view $C_{\mathrm{red}}^*(G)$ as a $\mathcal{B}$-valued probability space, by considering the canonical conditional expectation  $E: C_{\mathrm{red}}^*(G) \to \mathcal{B}$, namely  the projection of $C_{\mathrm{red}}^*(G)$ onto $\mathcal{B}$ that is orthogonal with respect to the GNS inner product induced by the canonical trace of $C^*_{\mathrm{red}}(G)$.   
\end{example}

\end{example}

\subsection{Freeness with Amalgamation}
\label{subsecfreenesswithamalg}

One of the main technical ingredients of this paper is Voiculescu's notion of freeness with amalagamation \cite[\S 5]{voiculescu1985symmetries}. See  \cite[\S 3.8]{voiculescu1992free} and \cite[\S 9]{mingo2017free} for an introduction to the subject, and  \cite{jekel2018operator} for a detailed development.

\begin{definition}[Freeness with amalgamation] Consider an operator-valued probability space $(\mathcal{A}, E, \mathcal{B})$. Let $\{\mathcal{A}_i\}_{i\in I}$ be a family of subalgebras of $\mathcal{A}$ such that $\mathcal{B}\subset \mathcal{A}_i$ for every $i\in I$. We say that the algebras $\mathcal{A}_i$ are free with amalgamation over $\mathcal{B}$ if $E(a_1 \cdots a_m) =0$ whenever $a_i \in \mathcal{A}_{j_i}$, $j_1\neq j_2\neq \cdots \neq j_m$ and $E(a_i)=0$. 

Random variables in $\calA$ are said to be free with amalgamation over $\calB$ if the  subalgebras generated by them are so. 
\end{definition}

Observe that in the particular case of $\mathcal{B} = \mathbb{C} 1_\calA$, we recover the usual definition of free independence. In this work we exploit the following relation between freeness and freeness with amalgamation. 

\begin{observation}
\label{obs:matrixfreeness}
Let $(\mathcal{A}, \func)$ be a non-commutative probability space and  $\mathcal{A}_1, \mathcal{A}_2 \subset \mathcal{A}$ be  freely independent subalgebras. Let $X, Y \in M_n(\mathbb{C})\otimes \mathcal{A}$ with $X=(x_{ij})_{i,j=1}^n$ and $Y=(y_{ij})_{i,j=1}^n$. If $x_{ij}\in \mathcal{A}_1$ and $y_{ij}\in \mathcal{A}_2$ for every $i, j=1, \dots, n$, then $X$ and $Y$ are free with amalgamation over $M_n(\mathbb{C})$ with respect to the conditional expectation defined in (\ref{eqconditionalexp}). 
\end{observation}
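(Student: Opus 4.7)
The plan is to verify the defining property of freeness with amalgamation directly from the scalar-level freeness of $\mathcal{A}_1$ and $\mathcal{A}_2$. First I would replace $X$ and $Y$ by the larger subalgebras $M_n(\mathbb{C}) \otimes \mathcal{A}_1$ and $M_n(\mathbb{C}) \otimes \mathcal{A}_2$ of $M_n(\mathbb{C}) \otimes \mathcal{A}$. Both contain the unital copy $M_n(\mathbb{C}) \otimes 1_{\mathcal{A}}$, so it suffices to show that these two algebras are free with amalgamation over $M_n(\mathbb{C})$; the conclusion for the smaller algebras generated by $X$ and $Y$ together with $M_n(\mathbb{C})$ follows immediately.

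Next I would take an alternating centered product. Pick $Z_1, \dots, Z_k$ with $Z_l \in M_n(\mathbb{C}) \otimes \mathcal{A}_{j_l}$, indices $j_1 \neq j_2 \neq \cdots \neq j_k$, and $E(Z_l) = 0$. Writing $Z_l = (z^{(l)}_{ij})_{i,j=1}^n$ with $z^{(l)}_{ij} \in \mathcal{A}_{j_l}$, the hypothesis $E(Z_l) = 0$ translates, via the definition $E = \mathrm{Id}_{M_n(\mathbb{C})} \otimes \tau$ from (\ref{eqconditionalexp}), to the entrywise condition $\tau(z^{(l)}_{ij}) = 0$ for all $i,j$. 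Expanding the matrix product entrywise gives
\[
[E(Z_1 Z_2 \cdots Z_k)]_{i_0, i_k} = \sum_{i_1, \dots, i_{k-1}} \tau\bigl(z^{(1)}_{i_0 i_1}\, z^{(2)}_{i_1 i_2} \cdots z^{(k)}_{i_{k-1} i_k}\bigr).
\]
In each summand the factors alternate between $\mathcal{A}_1$ and $\mathcal{A}_2$ according to the pattern $j_1, j_2, \dots, j_k$ and every factor is centered with respect to $\tau$. Hence the freeness of $\mathcal{A}_1, \mathcal{A}_2$ in $(\mathcal{A}, \tau)$, as recalled in Definition \ref{def:freeindependence}, forces each term to vanish, and therefore $E(Z_1 \cdots Z_k) = 0$.

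There is no real obstacle here; the only point to be careful about is the correct translation between matrix-level centering with respect to $E$ and entrywise centering with respect to $\tau$, and the observation that the alternation pattern on $Z_l$ is inherited by the scalar factors $z^{(l)}_{i_{l-1} i_l}$ regardless of which index path $(i_0, \dots, i_k)$ one chooses. Once these two bookkeeping points are noted, scalar-level free independence does all the combinatorial work.
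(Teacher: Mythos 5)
Your proof is correct. The paper states this as an Observation without proof, and your argument — expanding $E(Z_1\cdots Z_k)$ entrywise, noting that $E(Z_l)=0$ is equivalent to entrywise $\tau$-centering, and observing that each scalar summand $\tau\bigl(z^{(1)}_{i_0 i_1}\cdots z^{(k)}_{i_{k-1}i_k}\bigr)$ is an alternating product of centered elements killed by Definition \ref{def:freeindependence} — is precisely the routine verification the authors are implicitly relying on, including the reduction to the full subalgebras $M_n(\mathbb{C})\otimes\mathcal{A}_1$ and $M_n(\mathbb{C})\otimes\mathcal{A}_2$, both of which contain $M_n(\mathbb{C})\otimes 1_{\mathcal{A}}$ as required.
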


The connection with the amalgamated free product of groups is the following. 

\begin{proposition}[Voiculescu]
\label{propCalgebrawithamalgamation}
Let $G_1, G_2$ be discrete groups with a common subgroup $H$. Let $G$ be the group free product with amalgamation of $G_1, G_2$ over $H$, i.e. $ G \myeq G_1 \ast_H G_2$. Consider the inclusions $\rho_0 : C_{\mathrm{red}}^*(H) \to C_{\mathrm{red}}^*(G)$ and $\rho_i : C_{\mathrm{red}}^*(G_i)\to C_{\mathrm{red}}^*(G)$ for $i=1, 2$. Now,  as in Example \ref{exreducedopvalprobaspace}  put $\mathcal{B} \myeq \rho_0( C_{\mathrm{red}}^*(H))$ and view $C_{red}^*(G)$ as a $\mathcal{B}$-valued probability space. Then $\rho_1( C_{\mathrm{red}}^*(G_1))$ and $\rho_2( C_{\mathrm{red}}^*(G_2))$ are free with amalgamation over $\mathcal{B}$. 
\end{proposition}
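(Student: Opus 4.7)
The plan is to leverage the explicit description of the conditional expectation $E$ via orthogonal projection on $\ell^2(G)$ and combine it with the normal form theorem for amalgamated free products of groups. First, I would describe $E$ concretely: viewing $C_{\mathrm{red}}^*(G) \subset B(\ell^2(G))$, the subalgebra $\mathcal{B} = \rho_0(C_{\mathrm{red}}^*(H))$ leaves the subspace $\mathcal{K} = \overline{\mathrm{span}}\{\delta_h : h \in H\} \subset \ell^2(G)$ invariant, and $E$ is realized as ``compress to $\mathcal{K}$.'' In particular, on the canonical generators we have $E(\lambda(g)) = \lambda(g)$ if $g \in H$ and $E(\lambda(g)) = 0$ otherwise, since $\lambda(g) \delta_e = \delta_g$ and $\delta_g \in \mathcal{K}$ iff $g \in H$.

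Next, I would verify that the restriction of $E$ to $\rho_i(C_{\mathrm{red}}^*(G_i))$ coincides with the canonical conditional expectation $E_i : C_{\mathrm{red}}^*(G_i) \to C_{\mathrm{red}}^*(H)$. This follows immediately from the formula above: both maps agree on the basis $\{\lambda(g)\}_{g \in G_i}$ by sending $\lambda(g)$ to $\lambda(g)$ or $0$ according to whether $g \in H$, and both are bounded linear. Consequently, if $a \in \rho_i(C_{\mathrm{red}}^*(G_i))$ satisfies $E(a) = 0$, then $a$ lies in the norm-closure of $\mathrm{span}\{\lambda(g) : g \in G_i \setminus H\}$, because the kernel of $E_i$ is exactly the closure of this span (the image of the projection on $\ell^2(G_i)$ onto $\mathcal{K}^\perp \cap \ell^2(G_i)$ carries precisely the coefficients supported off $H$).

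With these preparations, I take a product $a_1 a_2 \cdots a_n$ with $a_k \in \rho_{i_k}(C_{\mathrm{red}}^*(G_{i_k}))$, $i_1 \neq i_2 \neq \cdots \neq i_n$, and $E(a_k) = 0$. By density and continuity of $E$, it suffices to check the case where each $a_k$ is a finite linear combination of $\lambda(g)$ with $g \in G_{i_k} \setminus H$. Expanding, $a_1 \cdots a_n$ is then a finite combination of terms $\lambda(g_1 g_2 \cdots g_n)$ with $g_k \in G_{i_k} \setminus H$ and the indices $i_k$ alternating. The crucial point, which I would then invoke from the theory of amalgamated free products, is the normal form theorem: any such alternating product $g_1 g_2 \cdots g_n$ with each $g_k \notin H$ is itself not an element of $H$. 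Therefore every summand satisfies $E(\lambda(g_1 \cdots g_n)) = 0$, and we conclude $E(a_1 \cdots a_n) = 0$.

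The main obstacle I expect is the clean identification of $E|_{\rho_i(C_{\mathrm{red}}^*(G_i))}$ with $E_i$; one has to be a bit careful because the global state on $C_{\mathrm{red}}^*(G)$ restricts correctly to the canonical trace on $C_{\mathrm{red}}^*(G_i)$ (this uses that $\lambda(g)\delta_e$ for $g \in G_i$ lands in the copy of $\ell^2(G_i)$ inside $\ell^2(G)$, which follows from the fact that the inclusion $G_i \hookrightarrow G$ induces an isometric embedding of reduced $C^*$-algebras—itself a consequence of the normal form). Once this faithfulness/embedding step is in hand, the rest reduces to the combinatorial normal-form statement, which is the standard non-triviality theorem for reduced words in $G_1 \ast_H G_2$.
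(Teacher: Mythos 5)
The paper does not actually prove Proposition \ref{propCalgebrawithamalgamation}: it is stated as a known theorem of Voiculescu (originating in \cite{voiculescu1985symmetries}) and used as a black box, so there is no internal proof to compare against. Judged on its own, your argument is the standard proof of this fact and is essentially correct. The three ingredients are all in place: (1) the concrete description of the canonical conditional expectation as the map that kills the Fourier coefficients $\lambda(g)$ with $g \notin H$ (equivalently, compression to $\ell^2(H)$, which agrees with the GNS-orthogonal projection of Example \ref{exreducedopvalprobaspace} because the $\lambda(g)$ are orthonormal for the trace inner product); (2) the identification $E \restriction_{\rho_i(C_{\mathrm{red}}^*(G_i))} = E_i$, together with the observation that $\ker E_i$ is the norm closure of $\mathrm{span}\{\lambda(g) : g \in G_i \setminus H\}$ (your implicit approximation $a \approx b - E_i(b)$ is the right one, using contractivity of $E_i$); and (3) the reduction by multilinearity and norm-continuity of multiplication to alternating products of group elements $g_k \in G_{i_k}\setminus H$, where the normal form theorem (Theorem \ref{thm:structure} in the paper, or Britton's lemma) gives $g_1 \cdots g_n \notin H$ and hence $E(\lambda(g_1\cdots g_n)) = 0$. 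Two small points worth making explicit in a written version: the embedding $\rho_i : C_{\mathrm{red}}^*(G_i) \to C_{\mathrm{red}}^*(G)$ is isometric because $\ell^2(G)$ restricted to $G_i$ is a multiple of the left regular representation of $G_i$ (a coset decomposition, not needing the normal form per se), and the simultaneous approximation of all $n$ factors requires a telescoping estimate, which is routine. Neither is a gap.
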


We are now ready to explain in precise terms the way in which free probability appears in the study of the spectra of Cayley graphs. Consider a finitely generated group $G$ and let $S\subset G$ be a finite generating set. Denote by $\Gamma(G,S)$ the left Cayley graph of $G$ with respect to $S$. When it is clear from the context, we will simply write $\Gamma$.  Let $\lambda$ be the left regular representation of $G$, and note that the operator \marginpar{$T_\Gamma$}
$$T_{\Gamma} \myeq \sum_{s \in S} \lambda(s) \in C_{\mathrm{red}}^*(G)$$
is the adjacency operator of $\Gamma$. Indeed, if $G$ is identified with the vertices of $\Gamma$, then for every $g\in G$ 
$$T_{\Gamma}(\delta_g) = \sum_{ s \in S} \delta_{sg}. $$
Moreover, if $S$ is symmetric, meaning $S = S^{-1}$, then clearly $\Gamma$ is undirected and $T_\Gamma$ is self-adjoint. 

\begin{observation}
\label{obs:cayleydecomp}
Let $G_1, G_2$ be two finitely generated groups with a common subgroup $H$. Let $S_1 \subset G_1$ and $S_2 \subset G_2$ be respective finite generating sets. For $j=1, 2$, let $\iota_j : G_j \to G_1 \ast_H G_2$ be the canonical inclusion and let $S = \iota_1(S_1)\cup \iota_2(S_2)$. Let $\lambda$ be the left regular representation of $G_1 \ast_H G_2$ on $\ell^2(G_1\ast_H G_2)$. Let $\Gamma \myeq \Gamma (G_1\ast_H G_2, S ), \Gamma_1 \myeq \Gamma(G_1, S_1)$ and $\Gamma_2 \myeq \Gamma (G_2, S_2)$. Then we have the following decomposition
$$T_\Gamma  = \sum_{s \in S} \lambda(s) = \sum_{s\in \iota_1(S_1)} \lambda(s) + \sum_{r \in \iota_2(S_2)} \lambda(r) = \widetilde{T_{\Gamma_1}}+ \widetilde{T_{\Gamma_2}},$$
where $\widetilde{T_{\Gamma_j}}$ denotes the natural inclusion of $T_{\Gamma_j}\in C_{\mathrm{red}}^*(G_j)$ into $C_{\mathrm{red}}^*(\ell^2(G_1\ast_H G_2))$. 
From Proposition \ref{propCalgebrawithamalgamation} we know that $\widetilde{T_{\Gamma_1}}$ and $\widetilde{T_{\Gamma_2}}$ are free with amalgamation over $C_{red}^*(H)$ with respect to the conditional expectation $E: C_{\mathrm{red}}^*(G_1\ast_H G_2) \to C_{\mathrm{red}}^*(H)$ defined as in Example \ref{exreducedopvalprobaspace}. 
\end{observation}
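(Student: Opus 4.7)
The plan is to verify the two claims in sequence: the additive decomposition of $T_\Gamma$ follows by bookkeeping from the definition of the Cayley graph and the construction of the lifts, and the freeness with amalgamation is then immediate from Proposition \ref{propCalgebrawithamalgamation}. Since the statement is labeled an observation, I expect essentially no technical obstacle; the only care needed is in being precise about the lifts $\widetilde{T_{\Gamma_j}}$.

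First I would unpack the lifts. By the universal property of the amalgamated free product, each inclusion $\iota_j : G_j \hookrightarrow G_1 \ast_H G_2$ induces a canonical injective $\ast$-homomorphism $\rho_j : C_{\mathrm{red}}^*(G_j) \to C_{\mathrm{red}}^*(G_1 \ast_H G_2)$ intertwining the left regular representations, i.e.\ $\rho_j \circ \lambda_{G_j} = \lambda \circ \iota_j$. By definition, $\widetilde{T_{\Gamma_j}} = \rho_j(T_{\Gamma_j})$, so
\[
\widetilde{T_{\Gamma_j}} \;=\; \rho_j\!\left(\sum_{s \in S_j} \lambda_{G_j}(s)\right) \;=\; \sum_{s \in S_j} \lambda(\iota_j(s)) \;=\; \sum_{t \in \iota_j(S_j)} \lambda(t).
\]
Since $S = \iota_1(S_1) \cup \iota_2(S_2)$ as a disjoint union (the generators of the factors are distinct elements of $G_1 \ast_H G_2$, and we may assume $S_j \cap H = \emptyset$ as is standard), splitting the defining sum gives
\[
T_\Gamma \;=\; \sum_{s \in S} \lambda(s) \;=\; \sum_{t \in \iota_1(S_1)} \lambda(t) + \sum_{t \in \iota_2(S_2)} \lambda(t) \;=\; \widetilde{T_{\Gamma_1}} + \widetilde{T_{\Gamma_2}},
\]
which is the claimed decomposition.

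For the second claim, note that $\widetilde{T_{\Gamma_j}} \in \rho_j(C_{\mathrm{red}}^*(G_j))$ for $j=1,2$. Proposition \ref{propCalgebrawithamalgamation} asserts precisely that the subalgebras $\rho_1(C_{\mathrm{red}}^*(G_1))$ and $\rho_2(C_{\mathrm{red}}^*(G_2))$ of $C_{\mathrm{red}}^*(G_1 \ast_H G_2)$ are free with amalgamation over $\mathcal{B} = \rho_0(C_{\mathrm{red}}^*(H))$ with respect to the canonical conditional expectation from Example \ref{exreducedopvalprobaspace}. Since freeness with amalgamation of two subalgebras is inherited by any pair of elements drawn from them, we conclude that $\widetilde{T_{\Gamma_1}}$ and $\widetilde{T_{\Gamma_2}}$ are free with amalgamation over $C_{\mathrm{red}}^*(H)$, which completes the observation. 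The main thing to be careful about is not a genuine obstacle but a bookkeeping point: making sure the lifts $\widetilde{T_{\Gamma_j}}$ are identified correctly with the images of the generating set sums under $\rho_j$, so that the appeal to Proposition \ref{propCalgebrawithamalgamation} is valid.
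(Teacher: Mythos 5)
Your proposal is correct and follows essentially the same route as the paper: the paper presents the decomposition as the displayed chain of equalities (implicitly using that $\widetilde{T_{\Gamma_j}}$ is the image of $T_{\Gamma_j}$ under the induced inclusion, so the sum over $S$ splits) and then cites Proposition \ref{propCalgebrawithamalgamation} for the freeness with amalgamation. Your extra care about the disjointness of $\iota_1(S_1)$ and $\iota_2(S_2)$ is a reasonable bookkeeping point that the paper leaves implicit.
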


\subsection{The Amalgamated Free Product of Hilbert $\mathcal{B}$-modules}
\label{subsecHilbertProd}

 In this section we will assume that $\mathcal{B}$ is a unital $C^*$-algebra and we will focus on  $\mathcal{B}$-valued probability spaces given by operators on Hilbert modules. The theory of Hilbert modules is delicate and we refer the reader to \cite{jekel2018operator} for a comprehensive treatment. Below, we content ourselves with  providing a quick summary of the objects considered here, referring the reader to specific places in \cite{jekel2018operator} for details. 

A right Hilbert $\calB$-module, say $\calH$, is a right $\calB$-module with a $\calB$-valued inner product 
$$\langle\cdot , \cdot  \rangle: \calH \times \calH \to \calB,$$ 
for which $\calH$ is complete with respect to a  norm determined by $\langle \cdot , \cdot \rangle$ in a suitable way (see  \cite[\S 1.2]{jekel2018operator} for definitions and basic properties of these objects). Then, a linear map $T : \calH \to \calH$ (when viewing $\calH$ as a vector space) is said to be right-$\calB$-linear if $T(\eta b)=T(\eta)b$ for all $b\in \calB$ and $\eta\in \calH$. The $\calB$-valued inner product on $\calH$ allows one to define the adjoint for right-$\calB$-linear operators (which may not always exist), and the norm induced by the inner product leads to a natural definition of bounded right-$\calB$-linear operators (see Section 1.2, Definitions 1.2.8 and 1.2.9 in \cite{jekel2018operator}).  

In what follows, for $\calH$ a right Hilbert $\calB$-module, we will denote the $\ast$-algebra of bounded, adjointable (i.e. operators for which the adjoint exists), right $\mathcal{B}$-linear operators on $\mathcal{H}$ by $\widetilde{B}(\mathcal{H})$ \marginpar{$\widetilde{B}(\mathcal{H})$}. Then, a Hilbert $\calB$-bimodule $\calH$ is defined as a right Hilbert $\calB$-module together with a $\ast$-homomorphism $\pi: \calB \to \widetilde{B}(\calH)$. Intuitively $\pi$ provides a left action of $\calB$ on $\calH$ (complementing the existing right action provided by the right $\calB$-module structure), so for $b\in \calB$ and $\eta\in \calH$ we will use $b \eta$ as a shorthand notation for $\pi(b) \eta$.

In Section \ref{secgraphproduct} we will work with the following type of operator-valued probability space,  introduced by Voiculescu in \cite[\S 5.1]{voiculescu1985symmetries}.

\begin{example}[$\calB$-valued conditional expectation on $\widetilde{B}(\calH)$] 
\label{exhilbmods}
Assume that $\mathcal{B}$ is a unital $C^*$-algebra and that $\mathcal{H}$ is a  Hilbert $\mathcal{B}$-bimodule. Furthermore, assume that $\xi \in \mathcal{H}$ is a unit central vector, that is $\langle \xi, \xi \rangle_\mathcal{H} = 1_\mathcal{B}$ and $b\xi = \xi b$ for all $b\in \B$, and consider the decomposition $\mathcal{H} = \xi \mathcal{B} \oplus \overset{\circ}{\mathcal{H}}$, where $\overset{\circ}{\mathcal{H}}$ denotes the orthogonal complement of $\xi \mathcal{B}$ with respect to the $\B$-valued inner product of $\mathcal{H}$ (this is well defined by Lemma 4.3.2 in \cite{jekel2018operator}). It is easy to see that because $\xi$ is a unit central vector, $\pi: \calB \to \widetilde{B}(\calH)$ induces a  $\calB$-bimodule isomorphism between $\calB$ and  $\xi \calB$, and hence $\calB$ can be viewed as a subalgebra of $\widetilde{B}(\calH)$ (see Lemma 4.3.2 in \cite{jekel2018operator}), moreover, the map $E: \widetilde{B}(\calH) \to \calB$ given by
$$E[X] \myeq \langle \xi, X \xi \rangle, \quad \forall X \in \widetilde{B}(\calH),$$
is a $\calB$-valued conditional expectation (see Lemma 1.5.4 in \cite{jekel2018operator}) and hence  $\big(\widetilde{B}(\calH), E, \calB\big)$ is an operator-valued probability space. 
\end{example}

Now consider a family of Hilbert  $\mathcal{B}$-bimodules $\{\mathcal{H}_i\}_{i\in I}$, and for every $i\in I$ suppose that $\xi_i \in \mathcal{H}_i$ is a unit central vector. Then, as in Example \ref{exhilbmods},  consider the decomposition $\mathcal{H}_i = \xi_i \mathcal{B} \oplus \overset{\circ}{\mathcal{H}_i}$ and define the $\calB$-valued conditional expectation $E_i(X) \myeq \langle \xi_i, X\xi_i \rangle_{\mathcal{H}_i}$, so that we can view each $\widetilde{B}(\mathcal{H}_i)$ as a $\mathcal{B}$-valued probability space. We will now explain how to construct a bigger $\mathcal{B}$-valued probability space inside which we can find copies of each of the $\widetilde{B}(\mathcal{H}_i)$ that are free with amalgamation over $\mathcal{B}$. We refer the reader to \cite[\S 5]{voiculescu1985symmetries} for the original source and \cite[\S 4]{jekel2018operator} for an extended exposition. 

Define the amalgamated free product over $\mathcal{B}$ of the $\mathcal{H}_i$ as 
 $$(\mathcal{H}, \xi) \myeq\underset{ i \in I}{\ast_{\mathcal{B}}} (\mathcal{H}_i, \xi_i) \myeq \xi\mathcal{B} \oplus \bigoplus_{i_1\neq i_2 \neq  \cdots \neq i_n} \overset{\circ}{\mathcal{H}_{i_1}} \otimes_\mathcal{B} \cdots \otimes_\mathcal{B} \overset{\circ}{\mathcal{H}_{i_n}} .$$ 
 Next, for every $i \in I$ we consider the subspace of $\mathcal{H}$ 
 $$\mathcal{H}(i) \myeq \xi \mathcal{B} \oplus \bigoplus_{i \neq i_1,  i_1\neq i_2 \neq  \cdots \neq i_n} \overset{\circ}{\mathcal{H}_{i_1}} \otimes_\mathcal{B} \cdots \otimes_\mathcal{B} \overset{\circ}{\mathcal{H}_{i_n}},$$ and define the identifications $V_i:  \mathcal{H}_i\otimes_\mathcal{B} \mathcal{H}(i) \to \mathcal{H} $  by  sending $\xi_i \otimes \xi$ to $\xi$, and in the obvious way identifying $\overset{\circ}{\mathcal{H}_i} \otimes \xi$ with $\overset{\circ}{\mathcal{H}_i}$, each $\xi_i \otimes (\overset{\circ}{\mathcal{H}_{i_1}}\otimes_\mathcal{B} \cdots \otimes_\mathcal{B} \overset{\circ}{\mathcal{H}_{i_n}})$ with $\overset{\circ}{\mathcal{H}_{i_1}}\otimes_\mathcal{B} \cdots \otimes_\mathcal{B} \overset{\circ}{\mathcal{H}_{i_n}}$ and each $\overset{\circ}{\mathcal{H}_i} \otimes_\mathcal{B} (\overset{\circ}{\mathcal{H}_{i_1}}\otimes_\mathcal{B} \cdots \otimes_\mathcal{B} \overset{\circ}{\mathcal{H}_{i_n}})$ with $\overset{\circ}{\mathcal{H}_i} \otimes_\mathcal{B} \overset{\circ}{\mathcal{H}_{i_1}}\otimes_\mathcal{B} \cdots \otimes_\mathcal{B} \overset{\circ}{\mathcal{H}_{i_n}}$. Then, for every $i \in I$, define the left actions of the elements in $\widetilde{B}(\mathcal{H}_i)$ on $\calH$ by the inclusion  $\lambda_i : \widetilde{B}(\mathcal{H}_i) \to \widetilde{B}(\mathcal{H})$  given by
 $$\lambda_i(X) \myeq V_i (X \otimes \mathrm{Id}_{\mathcal{H}(i)})V_i^{-1}.$$
Finally,  note that also  $\xi \in \mathcal{H}$ defines a $\mathcal{B}$-valued conditional expectation on $\widetilde{B}(\mathcal{H})$ given by $$E(X) \myeq\langle \xi , X \xi \rangle.$$ 
\begin{theorem}[Voiculescu]
\label{propamalgfreeprod}
The above construction satisfies the following : 
\begin{enumerate}[i)]
    \item For every $i \in I$, $\lambda_i : \widetilde{B}(\mathcal{H}_i) \to \widetilde{B}(\mathcal{H})$ is an injective $\ast$-algebra homomorphism. 
    \item For every $i \in I$ and every $X \in \widetilde{B}(\mathcal{H}_i)$ it holds that $E(\lambda_i(X))=E_i(X)$. 
    \item The family of subalgebras $\left\{\lambda_i\big(\widetilde{B}(\mathcal{H}_i)\big)\right\}_{i\in I}$ of $\widetilde{B}(\mathcal{H})$  are free with amalgamation over $\mathcal{B}$ with respect to $E$. 
\end{enumerate}
\end{theorem}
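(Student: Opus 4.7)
The plan is to verify the three statements in turn, with (iii) being the substantive content; the strategy for (iii) is to show by induction that for centered, alternating strings the vector $\rho_{i_1}(X_1)\cdots \rho_{i_n}(X_n)\xi$ lands in the orthogonal summand $\overset{\circ}{\mathcal{H}_{i_1}}\otimes_{\mathcal{B}}\cdots\otimes_{\mathcal{B}}\overset{\circ}{\mathcal{H}_{i_n}}$ of $\mathcal{H}$, which is orthogonal to $\xi\mathcal{B}$.

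For part (i), observe that each $V_i:\mathcal{H}_i\otimes_{\mathcal{B}}\mathcal{H}(i)\to\mathcal{H}$ is an isometric, $\mathcal{B}$-bimodule isomorphism by construction (the direct summands of the two sides are matched by the stated identifications). Since $\rho_i(X)=V_i(X\otimes\mathrm{Id}_{\mathcal{H}(i)})V_i^{-1}$, it is the conjugation of a $*$-algebra homomorphism $X\mapsto X\otimes\mathrm{Id}_{\mathcal{H}(i)}$ by a unitary, hence is itself a $*$-algebra homomorphism. Injectivity follows because $\rho_i(X)=0$ forces $X\otimes\mathrm{Id}_{\mathcal{H}(i)}=0$; applying this to $h\otimes\xi$ for arbitrary $h\in\mathcal{H}_i$ yields $Xh\otimes\xi=0$, and since $\langle\xi,\xi\rangle_{\mathcal{H}(i)}=1_{\mathcal{B}}$, this gives $Xh=0$ for every $h$, so $X=0$.

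For part (ii), trace $\xi$ through the definitions: $V_i^{-1}\xi=\xi_i\otimes\xi$, so
\[
\rho_i(X)\xi = V_i\bigl((X\xi_i)\otimes\xi\bigr).
\]
Decompose $X\xi_i=\xi_iE_i(X)+\eta$ with $\eta\in\overset{\circ}{\mathcal{H}_i}$. Under $V_i$, the first summand $\xi_iE_i(X)\otimes\xi$ maps to $\xi\, E_i(X)\in\xi\mathcal{B}$, while $\eta\otimes\xi$ maps to $\eta\in\overset{\circ}{\mathcal{H}_i}$, which is orthogonal to $\xi\mathcal{B}$. Taking the $\mathcal{B}$-valued inner product against $\xi$ kills the orthogonal part and leaves $E(\rho_i(X))=E_i(X)$.

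Part (iii) is the main point and will be proved by induction on $n$, by establishing the stronger statement: if $i_1\neq i_2\neq\cdots\neq i_n$ and $X_k\in\widetilde{B}(\mathcal{H}_{i_k})$ with $E_{i_k}(X_k)=0$, then
\[
\rho_{i_1}(X_1)\cdots\rho_{i_n}(X_n)\,\xi \in \overset{\circ}{\mathcal{H}_{i_1}}\otimes_{\mathcal{B}}\overset{\circ}{\mathcal{H}_{i_2}}\otimes_{\mathcal{B}}\cdots\otimes_{\mathcal{B}}\overset{\circ}{\mathcal{H}_{i_n}}\subset\mathcal{H}.
\]
For the base case $n=1$, the computation in (ii) shows $\rho_{i_1}(X_1)\xi=V_{i_1}((X_1\xi_{i_1})\otimes\xi)=X_1\xi_{i_1}\in\overset{\circ}{\mathcal{H}_{i_1}}$ since $E_{i_1}(X_1)=0$. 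For the induction step, let $v:=\rho_{i_2}(X_2)\cdots\rho_{i_n}(X_n)\xi\in\overset{\circ}{\mathcal{H}_{i_2}}\otimes_{\mathcal{B}}\cdots\otimes_{\mathcal{B}}\overset{\circ}{\mathcal{H}_{i_n}}$. Since $i_1\neq i_2$, this tensor product is a summand of $\mathcal{H}(i_1)$, so $V_{i_1}^{-1}v=\xi_{i_1}\otimes v$. Applying $X_1\otimes\mathrm{Id}$ and decomposing $X_1\xi_{i_1}=\xi_{i_1}E_{i_1}(X_1)+\eta_1=\eta_1$ with $\eta_1\in\overset{\circ}{\mathcal{H}_{i_1}}$ yields $(X_1\otimes\mathrm{Id})(\xi_{i_1}\otimes v)=\eta_1\otimes v$, which $V_{i_1}$ sends into $\overset{\circ}{\mathcal{H}_{i_1}}\otimes_{\mathcal{B}}\overset{\circ}{\mathcal{H}_{i_2}}\otimes_{\mathcal{B}}\cdots\otimes_{\mathcal{B}}\overset{\circ}{\mathcal{H}_{i_n}}$ under the defining identification. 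Since this subspace is orthogonal to $\xi\mathcal{B}$ with respect to the $\mathcal{B}$-valued inner product, $E(\rho_{i_1}(X_1)\cdots\rho_{i_n}(X_n))=\langle\xi,\rho_{i_1}(X_1)\cdots\rho_{i_n}(X_n)\xi\rangle=0$, which is exactly freeness with amalgamation.

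The main obstacle is bookkeeping in the inductive step of (iii): one must verify that the inductive vector $v$ genuinely lies in $\mathcal{H}(i_1)$ (using $i_1\neq i_2$) so that $V_{i_1}^{-1}$ acts as $\xi_{i_1}\otimes(\cdot)$, and that the subsequent application of the identification $V_{i_1}$ correctly assembles the tensor $\eta_1\otimes v$ into the alternating summand. Everything else reduces to routine manipulations with the orthogonal decomposition $\mathcal{H}_i=\xi_i\mathcal{B}\oplus\overset{\circ}{\mathcal{H}_i}$ and the definition of $E_i$.
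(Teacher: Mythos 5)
The paper does not actually prove this theorem: it is stated as a result of Voiculescu and the reader is referred to \cite[\S 5]{voiculescu1985symmetries} and \cite[\S 4]{jekel2018operator} for proofs, so there is no in-paper argument to compare against. Your proof is correct and is essentially the canonical argument from those sources: (i) and (ii) follow by tracing $\xi$ through the identifications $V_i$, and (iii) follows from the stronger inductive claim that an alternating product of centered elements sends $\xi$ into the summand $\overset{\circ}{\mathcal{H}_{i_1}} \otimes_\mathcal{B} \cdots \otimes_\mathcal{B} \overset{\circ}{\mathcal{H}_{i_n}}$, which is orthogonal to $\xi\mathcal{B}$. The steps that deserve the most care are handled: you use $\langle \xi,\xi\rangle = 1_\mathcal{B}$ to upgrade $Xh \otimes \xi = 0$ to $Xh = 0$ in the balanced tensor product (injectivity in (i)), and you use $i_1 \ne i_2$ to place the inductive vector $v$ inside $\mathcal{H}(i_1)$ so that $V_{i_1}^{-1}v = \xi_{i_1} \otimes v$. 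Two small points you could make explicit: in (ii), the identification $V_i(\xi_i E_i(X) \otimes \xi) = \xi E_i(X)$ uses that $\xi_i E_i(X) \otimes \xi = \xi_i \otimes E_i(X)\xi$ in the $\mathcal{B}$-balanced tensor product together with $\chi(b)\xi = \xi b$; and for (iii) to yield freeness with amalgamation as defined in the paper one should also record that each $\rho_i(\widetilde{B}(\mathcal{H}_i))$ contains the copy of $\mathcal{B}$, which follows since $\rho_i \circ \chi_i$ agrees with the distinguished embedding of $\mathcal{B}$ into $\widetilde{B}(\mathcal{H})$. Neither affects the validity of the argument.
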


\subsection{The Operator-Valued $R$-Transform}
\label{subsecRtrans}

In what follows we consider an operator-valued space $(\mathcal{A}, E, \mathcal{B})$ with $\mathcal{A}$ and $\mathcal{B}$ being unital $C^*$-algebras. Moreover, we assume that $E$ is positive, meaning that $E[XX^*]\geq 0$ for all $X\in \mathcal{A}$. 

In the scalar case, namely when $\mathcal{B} \cong \mathbb{C}$,  Voiculescu's $R$-transform is used to compute the distribution of sums of free random variables. See \cite[\S 3]{voiculescu1992free} for an introductory reference. The same machinery extends without many modifications to the operator-valued context. A good introductory reference for this subject is \cite[\S 9]{mingo2017free}. 

The first step towards defining the operator-valued $R$-transform is to define the operator-valued version of the Cauchy transform, which essentially is the conditional expectation applied to the resolvent. 

\begin{definition}[Operator-valued Cauchy transform]
Let $\mathbb{H}^+ \myeq \{ b\in \mathcal{B} : \mathrm{Im}(b) > \varepsilon 1_\mathcal{A}, \text{ for some } \varepsilon > 0 \}$ and $\mathbb{H}^- \myeq -\mathbb{H}^+$, where $\im(b) \myeq \frac{b-b^*}{2i}$. Then, for any self-adjoint  $X\in \mathcal{A}$, the operator-valued Cauchy transform of $X$ is the function $G_X: \mathbb{H}^+ \to \mathbb{H}^-$ given by \marginpar{$G_X(\cdot)$}
\begin{equation}
\label{eqopvalcauchy}
G_X(b) \myeq E[(b1_\mathcal{A}-X)^{-1}].
\end{equation}

\end{definition}

\begin{observation}
\label{obsCauchy}
We can recover the scalar Cauchy transform from the operator-valued Cauchy transform. More specifically, consider a non-commutative probability space $(\mathcal{A}, \func)$ and a unital subalgebra $\mathcal{B}\subset \mathcal{A}$ together with a conditional expectation $E: \mathcal{A} \to \mathcal{B}$. Furthermore, assume that $\func$ and $E$ are compatible in the sense that $\func(X) = \func(E(X))$ for every  $X\in \mathcal{A}$. 

Let $X\in \mathcal{A}$ be self-adjoint and let $\mu$ be the probability measure on $\mathbb{R}$ given by the distribution of $X$ with respect to $\func$, i.e., $\mu$ is the unique probability measure whose $k$-th moment equals $\func(X^k)$.  Then, for $z\in \mathbb{C}$ with $\mathrm{Im}(z) > 0$ we have that
$$\func(G_X(z1_\mathcal{B})) = \int_\mathbb{R} \frac{1}{z-t} d \mu(t), $$
where $G_X$ is as in (\ref{eqopvalcauchy}). 
\end{observation}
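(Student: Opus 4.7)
The plan is to unpack both sides of the claimed identity and reduce it to a standard computation about scalar Cauchy transforms. Since $\mathcal{B}\subset\mathcal{A}$ is unital we have $1_{\mathcal{B}}=1_{\mathcal{A}}$, so $\mathrm{Im}(z\cdot 1_{\mathcal{B}})=\mathrm{Im}(z)\,1_{\mathcal{A}}$ lies in $\mathbb{H}^+$ whenever $\mathrm{Im}(z)>0$, and the expression $G_X(z1_{\mathcal{B}})=E\bigl[(z1_{\mathcal{A}}-X)^{-1}\bigr]$ is well-defined. Applying $\tau$ and using the compatibility hypothesis $\tau\circ E=\tau$, the desired identity reduces to the scalar statement
$$\tau\bigl((z-X)^{-1}\bigr)=\int_{\mathbb{R}}\frac{1}{z-t}\,d\mu(t),\qquad \mathrm{Im}(z)>0.$$

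First I would verify this identity in the regime $|z|>\|X\|$ by a Neumann expansion. There the series $(z-X)^{-1}=\sum_{k\geq 0}z^{-(k+1)}X^{k}$ converges in norm, so linearity and norm-continuity of $\tau$ yield
$$\tau\bigl((z-X)^{-1}\bigr)=\sum_{k\geq 0}\frac{\tau(X^{k})}{z^{k+1}}=\sum_{k\geq 0}\frac{1}{z^{k+1}}\int_{\mathbb{R}}t^{k}\,d\mu(t),$$
using the defining property of $\mu$ as the distribution of the self-adjoint element $X$. Since $\mathrm{supp}(\mu)\subset[-\|X\|,\|X\|]$ and $|z|>\|X\|$, the geometric expansion $(z-t)^{-1}=\sum z^{-(k+1)}t^{k}$ converges uniformly on $\mathrm{supp}(\mu)$, so Fubini (equivalently, dominated convergence) permits the interchange of sum and integral needed to identify the right-hand side with $\int(z-t)^{-1}\,d\mu(t)$.

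Second, I would extend the equality to the full upper half-plane by analytic continuation. Because $X$ is self-adjoint, $\sigma(X)\subset\mathbb{R}$, so $z\mapsto(z-X)^{-1}$ is holomorphic from $\mathbb{C}\setminus\mathbb{R}$ into $\mathcal{A}$; composing with the bounded linear functional $\tau$ gives a holomorphic function on the upper half-plane. Similarly $z\mapsto\int_{\mathbb{R}}(z-t)^{-1}\,d\mu(t)$ is holomorphic on $\mathbb{C}\setminus\mathrm{supp}(\mu)$. These two holomorphic functions agree on the nonempty open set $\{z:\mathrm{Im}(z)>0,\,|z|>\|X\|\}$ inside the connected domain $\{\mathrm{Im}(z)>0\}$, so by the identity theorem they coincide throughout, which is the claim.

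There is essentially no substantive obstacle here: the only care required is in justifying the interchange of $\tau$ with the Neumann series and of sum with integral in the moment step, and in confirming that $z1_{\mathcal{B}}$ belongs to the domain $\mathbb{H}^+$ of $G_X$ for every $z$ in the upper half-plane, both of which are routine. An alternative route of comparable length would invoke the continuous functional calculus for self-adjoint elements of a $C^{*}$-algebra to write $\tau\bigl((z-X)^{-1}\bigr)=\int(z-t)^{-1}\,d(\tau\circ E_{X})(t)$ and observe that $\tau\circ E_{X}=\mu$ directly from the moments; I would mention this but prefer the Neumann argument as it uses only the algebraic structure already fixed in the excerpt.
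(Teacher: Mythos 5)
Your proof is correct. The paper states this Observation without any proof, so there is nothing to compare against; your argument (reduce to $\tau((z-X)^{-1})$ via $\tau\circ E=\tau$, verify the identity by Neumann series and moment matching for $|z|>\|X\|$, then extend by analytic continuation) is the standard verification and is consistent with how the paper itself handles such expansions elsewhere, e.g.\ the power-series expansion of the operator-valued Cauchy transform used in the proof of Lemma \ref{lemmadiagonalcauchy}.
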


Without going into much detail we recall that the operator-valued Cauchy transform has a left  inverse when restricted to a proper neighborhood of infinity, so the following definition makes sense. 

\begin{definition}[Operator-valued $R$-transform]
\label{defRtrans}
Let $X \in \mathcal{A}$ be self-adjoint. In a suitable neighborhood of $0_\mathcal{B} \in \mathcal{B}$, the operator-valued $R$-transform of $X$, denoted $R_X$, is well defined by the following equation \marginpar{$R_X(\cdot)$}
\begin{equation} \label{eqn:defrtrans}
    bG_X(b) = 1+R_X(G_X(b)) G_X(b),
\end{equation}
where $G_X(b)$ is as in (\ref{eqopvalcauchy}). 
\end{definition}

Many things are known about the $R$-transform (both the scalar and operator-valued). Here we will only need the following fact.

\begin{theorem}[Voiculescu]
Let $X, Y \in \mathcal{A}$ be self-adjoint random variables that are free with amalgamation over $\mathcal{B}$, then 
$$R_{X+Y} = R_X+R_Y, $$
where $R_X$ and $R_Y$ are as in Definition \ref{defRtrans}. 
\end{theorem}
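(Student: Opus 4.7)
The plan is to prove the theorem via Speicher's theory of operator-valued free cumulants, which linearize additive free convolution with amalgamation. First, introduce the $\mathcal{B}$-valued free cumulants $\kappa_n^{\mathcal{B}} : \mathcal{A}^n \to \mathcal{B}$ as the $\mathcal{B}$-multilinear maps uniquely determined by the operator-valued moment-cumulant formula
\begin{equation*}
E[a_1 \cdots a_n] = \sum_{\pi \in NC(n)} \kappa_\pi^{\mathcal{B}}[a_1, \ldots, a_n],
\end{equation*}
where $NC(n)$ is the lattice of non-crossing partitions of $[n]$ and $\kappa_\pi^{\mathcal{B}}$ denotes the nested evaluation of cumulants along the planar tree structure determined by $\pi$ (this nested structure is essential because $\mathcal{B}$ is not assumed commutative). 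Next, verify that the operator-valued $R$-transform of $X$ coincides with the cumulant generating series
\begin{equation*}
R_X(b) = \sum_{n \geq 0} \kappa_{n+1}^{\mathcal{B}}(Xb, Xb, \ldots, Xb, X),
\end{equation*}
by plugging this series into the functional equation of Definition \ref{defRtrans} and checking consistency with the moment-cumulant expansion of $G_X$.

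The central step is Speicher's characterization of freeness with amalgamation in terms of cumulants: the $\mathcal{B}$-subalgebras generated by $X$ and $Y$ are free with amalgamation over $\mathcal{B}$ if and only if every mixed cumulant vanishes, i.e.\ $\kappa_n^{\mathcal{B}}(c_1, \ldots, c_n) = 0$ whenever each $c_i$ lies in one of the two subalgebras and the indices are not all from the same side. One direction follows by expanding alternating centered products via the moment-cumulant formula and observing that after centering only non-crossing partitions with blocks entirely on one side can contribute; the other direction is a Möbius-type inversion on $NC(n)$.

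Once mixed-cumulant vanishing is established, $\mathcal{B}$-multilinearity of $\kappa_n^{\mathcal{B}}$ immediately gives
\begin{equation*}
\kappa_n^{\mathcal{B}}(X+Y, \ldots, X+Y) = \kappa_n^{\mathcal{B}}(X, \ldots, X) + \kappa_n^{\mathcal{B}}(Y, \ldots, Y)
\end{equation*}
for every $n \geq 1$, because every mixed term in the expansion vanishes. Feeding this back into the series description of the $R$-transform yields $R_{X+Y}(b) = R_X(b) + R_Y(b)$ on a common neighborhood of $0_{\mathcal{B}}$ where all three series converge.

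The main obstacle is the non-commutativity of $\mathcal{B}$: the nested evaluation $\kappa_\pi^{\mathcal{B}}$ must be defined so that each outer block of a non-crossing partition acts as a $\mathcal{B}$-bimodule operator on the cumulants attached to its inner blocks, and one must verify that this is the unique prescription compatible with the moment-cumulant formula; this is the technical heart of the proof. A secondary technical point is justifying convergence of all series in a neighborhood of $0_{\mathcal{B}}$, which follows from norm bounds of the form $\|\kappa_n^{\mathcal{B}}(X, \ldots, X)\| \leq C^n \|X\|^n$ obtained by inverting the moment-cumulant formula and using that $E$ is contractive.
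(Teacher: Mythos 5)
Your proposal is correct in outline, but note that the paper does not prove this statement at all: it is quoted as a classical theorem of Voiculescu and used as a black box, with the reader pointed to \cite[\S 9]{mingo2017free} and \cite{voiculescu1995operations} for the surrounding machinery. What you have written is precisely the standard Speicher-style combinatorial proof found in those references: define operator-valued free cumulants by the non-crossing moment--cumulant formula with nested (bimodule) evaluation, identify $R_X(b)$ with the cumulant generating series $\sum_{n\ge 0}\kappa_{n+1}^{\mathcal{B}}(Xb,\dots,Xb,X)$, invoke the vanishing-of-mixed-cumulants characterization of freeness with amalgamation, and conclude by multilinearity. This is genuinely different from Voiculescu's original argument, which was analytic and representation-theoretic (built on free products of Hilbert bimodules and canonical-form computations rather than cumulants); the combinatorial route is more elementary given Speicher's framework and makes the linearization transparent, at the cost of front-loading the technical work into the definition of $\kappa_\pi^{\mathcal{B}}$ and the mixed-cumulant theorem. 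One small point worth tightening: in the final step you state additivity for $\kappa_n^{\mathcal{B}}(X+Y,\dots,X+Y)$, but the $R$-transform series actually requires $\kappa_{n+1}^{\mathcal{B}}\bigl((X+Y)b,\dots,(X+Y)b,\,X+Y\bigr)$; the interspersed elements $b\in\mathcal{B}$ are absorbed using $\mathcal{B}$-multilinearity into arguments that still lie in the $\mathcal{B}$-subalgebras generated by $X$ and by $Y$ (which contain $\mathcal{B}$), so the mixed terms still vanish, but this absorption should be said explicitly since it is exactly where the hypothesis that the subalgebras contain $\mathcal{B}$ is used.
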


\section{Band Structure of Universal Covers}
\label{secnumberofbands}

 In this section $\calG$ will be a graph with $n$ vertices, edge weights $a$ and vertex potential $b$. And as above, $\calT$ will denote its universal cover (with the inherited periodic weights and potential)  and $A_\calT$ will be the corresponding periodic Jacobi matrix on $\calT$. 
 
 \subsection{Sunada's Theorem via Aymptotic Freeness}
 \label{sec:sunadastheorem}

Here we give an alternate route to a theorem of Sunada \cite{sunada1992group, avni2020periodic} (i.e. Theorem \ref{thm:mass}) upper-bounding the number of connected components of the spectrum of the universal cover, and below we point out a slight generalization to the case where the base graph is allowed to contain half-loops. But for now let us assume that every loop in $\calG$ is a whole-loop. 

For every $d$ let 
$$A_{d, \calG} =\sum_{u\in V(\calG)} b_u \Delta_{uu} \otimes I_d+ \sum_{\diredge\in E(\calG)} a_\diredge \Delta_{\sigma(\diredge)\tau(\diredge)} \otimes U_\diredge^{(d)}$$
  be the Jacobi matrix of a random $d$-lift, as specified in Definition \ref{def:randlift}.  Now, if $\calG$ has $m$ undirected edges, say $\{\diredge_1, \che_1\}, \dots, \{\diredge_m, \che_m\}$, for every $i\in [m]$ we can use $U_i^{(d)}$ as a shorthand notation of $U_{\diredge_i}^{(d)}$. Then, by construction  $U_1, \dots, U_m$ are independent random uniform permutation matrices and $\{U_\diredge^{(d)}\}_{\diredge\in E(\calG)}= \{U_i^{(d)}\}_{i\in [m]} \cup \{(U_i^{(d)})^*\}_{i\in [m]}$. It easy to see that each $U_i$ converges in distribution to a Haar unitary random variable (see (\ref{eqhaarunitary}) for a definition). Moreover, the asymptotic joint distribution is given by the following classical result of Nica. 
  
  \begin{theorem}[Nica \cite{nica1993asymptotically}]
  \label{thm:nica}
Let $\{U_1^{(d)}, \dots, U_m^{(d)}\}$ be a family of independent random uniform $d\times d$ permutation matrices. Then, with respect to the state $\frac{1}{d} \mathbb{E} \circ \mathrm{Tr}(\cdot)$, as $d$ goes to infinity, the family $\{U_1^{(d)}, \dots, U_m^{(d)}\}$ converges in distribution to a family  $\{u_1, \dots, u_m\}$ of free Haar unitaries. 
\end{theorem}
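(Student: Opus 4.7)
The plan is the method of moments. Since each $U_j^{(d)}$ is unitary, hence norm-bounded by $1$, convergence in $*$-distribution of $\{U_1^{(d)},\dots,U_m^{(d)}\}$ to $\{u_1,\dots,u_m\}$ is equivalent to convergence of every mixed $*$-moment $\frac{1}{d}\mathbb{E}[\mathrm{Tr}(W)]$ for $W$ a monomial in the $U_j^{(d)}$'s and their adjoints. Because each limiting $u_i$ is a Haar unitary, $\tau(u_i^k)=\delta_{k,0}$, so every nontrivial power $u_i^\epsilon$ with $\epsilon\ne 0$ is already $\tau$-centered; hence the freeness of $\{u_1,\dots,u_m\}$ reduces to the single claim
\begin{equation*}
\lim_{d\to\infty}\frac{1}{d}\mathbb{E}\bigl[\mathrm{Tr}\bigl((U_{i_1}^{(d)})^{\epsilon_1}\cdots (U_{i_k}^{(d)})^{\epsilon_k}\bigr)\bigr]=0
\end{equation*}
for every $k\ge 1$, every consecutively-distinct index sequence $i_1\ne i_2\ne\cdots\ne i_k$, and every $(\epsilon_1,\dots,\epsilon_k)\in(\mathbb{Z}\setminus\{0\})^k$. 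The case $k=1$ is immediate: for $\sigma$ uniform on the symmetric group on $[d]$ and $|\epsilon|\le d$, the expected number of fixed points of $\sigma^{\epsilon}$ equals the number of divisors of $|\epsilon|$, an $O(1)$ quantity, so the normalized trace is $O(1/d)$. In particular each marginal is asymptotically Haar.

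For $k\ge 2$ I would expand the trace as a fixed-point count,
\begin{equation*}
\mathrm{Tr}\bigl((U_{i_1}^{(d)})^{\epsilon_1}\cdots (U_{i_k}^{(d)})^{\epsilon_k}\bigr)=\#\bigl\{(j_1,\dots,j_k)\in [d]^k : \sigma_{i_s}^{\epsilon_s}(j_{s+1})=j_s\ \text{for all } s,\ \text{with } j_{k+1}:=j_1\bigr\},
\end{equation*}
take expectations, and group the resulting functional constraints according to which permutation $\sigma_i$ they involve. Independence of the $\sigma_i$'s factors the expectation across $i\in\{1,\dots,m\}$, and for each $i$ the probability that $\sigma_i$ satisfies a consistent system of $p_i$ distinct functional constraints equals $(d-p_i)!/d!$. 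Organizing the sum over $(j_1,\dots,j_k)$ by the set-partition $\pi$ of $\{1,\dots,k\}$ recording which coordinates coincide, each class contributes $O\bigl(d^{|\pi|-\sum_i p_i(\pi)}\bigr)$ to the expected trace, so the task reduces to establishing the combinatorial inequality $|\pi|\le\sum_i p_i(\pi)$ for every $\pi$ whose constraint system is consistent.

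The main obstacle is exactly this inequality, and the alternating hypothesis is what forces it. I would prove it by associating to each pair (word, $\pi$) a labeled multigraph $H_\pi$ on the blocks of $\pi$, with one edge for each constraint $\sigma_{i_s}^{\epsilon_s}(j_{s+1})=j_s$, colored by $i_s$; then $|\pi|$ is the vertex count, while $\sum_i p_i(\pi)$ is the total number of edges left after collapsing redundancies within each color class, and the alternating hypothesis $i_s\ne i_{s+1}$ prevents two consecutive constraints from merging. A short Euler-characteristic argument on each monochromatic subgraph then yields $|\pi|\le\sum_i p_i(\pi)$, with strict inequality unless the word is trivially cancellable. The subtlest cases are words with non-consecutive repeats such as $\sigma_1^{\epsilon_1}\sigma_2^{\epsilon_2}\sigma_1^{\epsilon_3}$ where the same permutation reappears across the word; here the intervening letter, combined with the cyclic wrap-around from position $k$ back to position $1$, must be used to rule out accidental collisions that would violate the inequality.
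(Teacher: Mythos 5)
The paper does not prove this statement: it is imported verbatim from Nica's 1993 paper and used as a black box (the only thing the surrounding text does with it is combine it with Lemma \ref{lem:graphconvergence} to get Lemma \ref{lemmaunivcover}). So the comparison can only be against the standard proof, whose skeleton your outline correctly reproduces: the reduction of freeness to the vanishing of normalized expected traces of alternating words with nonzero exponents is right, and your $k=1$ divisor computation is correct.

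The two steps you defer are, however, the entire technical content, and one of them is stated in a form that is false. First, for $|\epsilon_s|\ge 2$ the event $\sigma_i^{\epsilon_s}(a)=b$ is not a functional constraint on $\sigma_i$, so the formula $(d-p_i)!/d!$ does not apply: for instance $\mathbb{P}(\sigma^2(a)=a)=2/d$ rather than $1/d$, and constraints on different powers of the same $\sigma_i$ (say $\sigma_i^2(a)=b$ and $\sigma_i^3(a)=c$) are correlated through the cycle of $a$. Your own $k=1$ answer --- a divisor count rather than $1$ --- already shows that powers cannot be fed into that formula. One must either introduce intermediate summation variables for each power (which changes both $|\pi|$ and the constraint count and reopens the collision analysis) or argue via cycle types, as Nica does. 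Second, the inequality $|\pi|\le\sum_i p_i(\pi)$ does not follow from an Euler-characteristic bound, and certainly not from one applied to each monochromatic subgraph separately: the full constraint multigraph is connected (it carries a closed walk through all blocks), so Euler's formula only gives $|\pi|\le\sum_i p_i(\pi)+1$, and a partition achieving equality there would contribute $O(d)$ to the unnormalized trace, i.e., would not vanish after dividing by $d$. The whole point of the lemma is to exclude this tree case: one must show that a closed walk on a tree has an immediate backtrack, that a backtrack produces two cyclically consecutive constraints supported on the same pair of blocks, and that these cannot collapse to a single edge --- using $i_s\ne i_{s+1}$ for interior positions and an injectivity/consistency argument for the wrap-around position where $i_k=i_1$ is permitted. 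You name this difficulty but do not resolve it, and the monochromatic decomposition you propose cannot see the obstruction, which lives in the union of the color classes. Filling these two gaps amounts to reproving Nica's theorem.
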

  
 On the other hand, recall that the result in Proposition \ref{propfreegenerators} gives a construction for a family of free Haar unitaries in $C_{\mathrm{red}}^*(\mathbb{F}_m)$, while  Lemma \ref{lem:graphconvergence}  says that random lifts converge in distribution to $A_\T$. These arguments put together yield the following lemma, which appeared implicitly in \cite{bordenave2019eigenvalues}.  
  
 \begin{lemma}
 \label{lem:repofATinfreegroup}
 Let $\{\diredge_1, \che_1\}, \dots, \{\diredge_m, \che_m\}$ be a numbering of the undirected edges of $\calG$, and $g_1, \dots, g_m$ be the canonical free generators of $\bF_m$, $\func$ the canonical trace of $\cred(\bF_m)$ and $\lambda$ the left regular representation of $\bF_m$ on $\ell^2(\bF_m)$. Then, the density of states of $A_{\calT}$ is the same as the spectral measure of 
 \begin{equation}
 \label{eq:repofATinfreegroup}
 \sum_{u\in V(\calG)} b_u\Delta_{uu} \otimes 1+ \sum_{i=1}^m a_{\diredge_i}\big( \Delta_{\sigma(\diredge_i)\tau(\diredge_i)}\otimes  \lambda(g_{i}) + \Delta_{\tau(\diredge_i)\sigma(\diredge_i)} \otimes \lambda(g_i^{-1}) \big)  
 \end{equation}
 in the non-commutative probability space $(M_n(\mathbb{C})\otimes C_{\mathrm{red}}^*(\mathbb{F}_m), \frac{1}{n}\mathrm{Tr}\otimes \func)$.
 \end{lemma} 
 
Now  the problem has been reduced to studying the spectral measure of a particular operator that lives in a well-studied $C^*$-algebra.\footnote{A similar approach is used by \cite{avni2020periodic} using a different $C^*$-algebra.  
Their approach does not use asymptotic freeness or any other free probability concept.} In order to study the band structure of the spectrum of this random variable  a standard trick in operator $K$-theory will be used. In the context of graph theory, this technique was first used by Aomoto  \cite{aomoto1988algebraic} and has been used in the study of different infinite graphs by different authors (e.g. \cite{sunada1992group, kollar2019line}).   

The main technical result that one needs from the theory of $C^*$-algebras is the following. 

\begin{proposition}
\label{propprojectionsKtheory}
Let $m$ and $n$ be positive integers and $\func$ be the canonical trace in $C_{\mathrm{red}}^*(\mathbb{F}_m)$. Then, if $P \in M_n (\mathbb{C})\otimes C_{\mathrm{red}}^*(\mathbb{F}_m)$ is a projection, $(\mathrm{Tr}\otimes \func )(P)$ is a non-negative integer. 
\end{proposition}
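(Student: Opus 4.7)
The plan is to invoke the Pimsner--Voiculescu computation of $K_0(C^*_{\mathrm{red}}(\mathbb{F}_m))$ together with the standard Morita-equivalence identification $K_0(M_n(\mathbb{C}) \otimes A) \cong K_0(A)$, and then read off the value of $(\mathrm{Tr} \otimes \tau)(P)$ through the induced map on $K_0$.

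First I would recall the relevant $K$-theoretic machinery. Any projection $P \in M_n(\mathbb{C}) \otimes C^*_{\mathrm{red}}(\mathbb{F}_m)$ defines a class $[P] \in K_0(M_n(\mathbb{C}) \otimes C^*_{\mathrm{red}}(\mathbb{F}_m))$, and by Morita equivalence this group is canonically isomorphic to $K_0(C^*_{\mathrm{red}}(\mathbb{F}_m))$. Moreover, any positive tracial state (or more generally, positive trace) $\sigma$ on a $C^*$-algebra $A$ induces a group homomorphism $\sigma_* : K_0(A) \to \mathbb{R}$ sending $[Q] \mapsto \sigma(Q)$ for any projection $Q$, and the image $\sigma_*(K_0(A))$ is the so-called range of the trace. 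For the matrix-valued situation, the trace $\mathrm{Tr} \otimes \tau$ on $M_n(\mathbb{C}) \otimes C^*_{\mathrm{red}}(\mathbb{F}_m)$ is compatible with $\tau$ under the Morita identification, so $(\mathrm{Tr} \otimes \tau)(P) = \tau_*([P])$.

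Next I would invoke the theorem of Pimsner and Voiculescu \cite{pimsner1982k}, which computes $K_0(C^*_{\mathrm{red}}(\mathbb{F}_m)) \cong \mathbb{Z}$, generated by the class $[1]$ of the unit. Since $\tau(1) = 1$, the induced map $\tau_*$ sends this generator to $1 \in \mathbb{R}$, so the range of $\tau_*$ is exactly $\mathbb{Z}$. Combining with the previous step, $(\mathrm{Tr} \otimes \tau)(P) = \tau_*([P]) \in \mathbb{Z}$.

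Finally, non-negativity is immediate: $P$ is a projection, hence positive, and both $\mathrm{Tr}$ and $\tau$ are positive, so $(\mathrm{Tr} \otimes \tau)(P) \geq 0$. The main content of the argument is really the Pimsner--Voiculescu theorem; the rest is formal manipulation of $K$-theory. The only subtlety to double-check is the compatibility statement $(\mathrm{Tr} \otimes \tau)(P) = \tau_*([P])$ under Morita equivalence, which is standard but worth verifying explicitly by stabilizing $P$ as an element of $K_0(C^*_{\mathrm{red}}(\mathbb{F}_m))$ under the inclusions $M_n(\mathbb{C}) \otimes C^*_{\mathrm{red}}(\mathbb{F}_m) \hookrightarrow M_{n+1}(\mathbb{C}) \otimes C^*_{\mathrm{red}}(\mathbb{F}_m) \hookrightarrow \cdots$.
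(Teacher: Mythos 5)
Your proposal is correct and follows essentially the same route as the paper: both arguments reduce the claim to the Pimsner--Voiculescu computation $K_0(C^*_{\mathrm{red}}(\mathbb{F}_m)) \cong \mathbb{Z}$ together with the fact that the trace descends to a group homomorphism on $K_0$ sending $[1]$ to $1$. The paper packages the latter step by verifying the hypotheses of the universal property of $K_0$ for the map $\nu = \mathrm{Tr}_n \otimes \tau$ on $\mathcal{P}_\infty$ (using that homotopic projections are unitarily equivalent and the trace is tracial), whereas you invoke the standard Morita identification and induced trace map $\tau_*$ --- a purely cosmetic difference.
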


Using a standard $K$-theory argument it can be seen that the above proposition follows from a deep and technical result of Pimsner and Voiculescu \cite{pimsner1982k}. A self-contained proof appears in \cite{avni2020periodic}. We can now show Sunada's theorem.

\begin{proof}[Proof of Theorem \ref{thm:mass}] 
Use $T_\G$ to denote the operator in (\ref{eq:repofATinfreegroup}), and let $I$ be a connected component of $\Spec(A_\mathcal{T}) = \Spec(T_\G)$. Now let $ \chi_I(x)$  be the indicator function of the set $I$, and note that since $I$ is a connected component of the spectrum, $\chi_I$ is a continuous function on $\Spec(T_{\mathcal{G}})$ and hence $\chi_I(T_\mathcal{G})\in M_n(\mathbb{C}) \otimes C_{\mathrm{red}}^*(\mathbb{F}_m)$. Moreover, since the range of $\chi_I$ is contained in $\mathbb{R}$ and $\chi_I^2 = \chi_I$, by the continuous functional calculus we have that $\chi_I(T_\G)$ is a projection. 

On the other hand, if $\dos$ is the density of states of $A_{\mathcal{T}}$, and $\mu_{T_\calG}$ is the spectral measure of $T_\calG$ with respect to the function $\frac{1}{n}\Tr\otimes \func$,  we have that
$$ \dos(I) = \mu_{T_\G}(I) = \int_{\Spec(T_\G)} \chi_I(x) d \mu_{T_\mathcal{G}}(x) = \left( \frac{1}{n} \mathrm{Tr} \otimes \func \right) (\chi_I(T_\G)).$$
So by  Proposition \ref{propprojectionsKtheory}, the quantity above is $k/n$ for some integer $k \ge 1$, as desired. 
\end{proof}

\begin{remark}[Spectral splitting]
The $C^*$-algebra representation of $A_\calT$ given in (\ref{eq:repofATinfreegroup}) can also be used to answer other questions about the number of bands in $\Spec(A_{\calT})$. We refer the reader to Appendix \ref{subsec: bandexamples} for a discussion of some problems about the number of bands in $\Spec(A_{\calT})$ that were left open in \cite{avni2020periodic}. 
\end{remark}

\subsection{Allowing Half-Loops}
\label{sec:halfloops}
When the base graph $\G$ has half-loops a modification of Sunada's result still holds. In this case, the random permutations in the lift associated to half-loops should be taken to be random matchings. To be precise, if $L(\calG)$ denotes the set of half-loops of $\calG$, the Jacobi matrix on a random $2d$-lift is given by 
$$A_{d, \calG} =\sum_{u\in V(\calG)} b_u \Delta_{uu} \otimes I_d+ \sum_{\diredge\in L(\calG)} \Delta_{\sigma(\diredge)\tau(\diredge)}\otimes P_\diredge^{(d)}+ \sum_{\diredge\in E(\calG)\setminus L(\calG)} a_\diredge \Delta_{\sigma(\diredge)\tau(\diredge)} \otimes U_\diredge^{(d)},$$
where the $P_\diredge^{(2d)}$ are independent random permutations distributed uniformly in the space of random  matchings of $[2d]$ (i.e.  matrices whose corresponding permutation $\alpha$ has no fixed points and satisfies  $\alpha^2 = \mathrm{Id}$). It is not hard to see that by the way we defined the universal cover a graph with half-loops (see Section \ref{sec:defandconv}) we again have Benjamini-Schramm convergence to $\calT$ when defining random lifts in this way, and hence Lemma \ref{lem:graphconvergence} still holds.  

Now note  that each $P_\diredge^{(2d)}$ is self-adjoint and distributes as a Rademacher random variable with respect to the state $\frac{1}{2d}\Tr(\cdot)$. On the other hand, by  the work of Nica \cite{nica1993asymptotically} we  know that asymptotic freeness also holds for independent random matchings and independent random uniform permutations. That is, Theorem \ref{thm:nica} admits the following extension. 

\begin{theorem}[Nica \cite{nica1993asymptotically}]
Let $\{P_1^{(2d)}, \dots, P_{m_1}^{(2d)}, U_1^{(2d)}, \dots, U_{m_2}^{(2d)}\}$ be a family of independent random $d\times d$ permutation matrices, with the $P_i^{(2d)}$ distribute as uniform matchings and the $U_i^{(2d)}$ as uniform permutation matrices. Then, as $d$ goes to infinity, this family converges in distribution (with respect to the state $\frac{1}{2d}\Tr(\cdot)$) to the free family $\{p_1, \dots, p_{m_1}, u_1, \dots, u_{m_2}\}$, where the $p_i$ are Rademacher random variables and the $u_i$ are  Haar unitaries. 
\end{theorem}

On the other hand, a copy in distribution of the free family $\{p_1, \dots, p_{m_1}, u_1, \dots, u_{m_2}\}$ mentioned above is given by the variables $\{\lambda(h_1), \cdots \lambda(h_{m_1}), \lambda(g_1), \dots, \lambda(g_{m_2})\}\subset \cred(\Gamma_{m_1}\ast \bF_{m_2})$, where $h_1, \dots, h_{m_1}, \dots , g_1, \dots, g_{m_2}$ are the canonical generators of the group $\Gamma_{m_1}\ast \bF_{m_2}$ (recall that $\Gamma_{m_1}$ denotes the free product of $m_1$ copies of $\mathbb{Z}_2$), and $\lambda $ denotes the left regular representation. The $K$-theory of this $C^*$-algebra is also well understood (see for example \cite{sunada1992group}), and hence one can get the corresponding analog of Proposition \ref{propprojectionsKtheory}. In this case, one obtains the following result.

\begin{proposition}[Analog of Sunada's theorem]
\label{prop:generalizedsunada}
If $\calG$ contains half-loops, then the density of states of $A_{\calT}$ assigns a positive integer multiple of $1/2n$ to any connected component of $\Spec(A_{\calT})$. Hence $\Spec(A_{\calT})$ has at most $2n$ components. 
\end{proposition}

The bound of $2n$  is again tight since $\G$ can be a finite tree with a half-loop added to some vertex, in which case $\T$ will be a finite tree with $2n$ vertices, and it is then easy to construct examples where the (in this case finite) Jacobi matrix  $A_{\calT}$ has $2n$ distinct eigenvalues.  Moreover, if one allows the coefficients $b_v$ to be nonzero, then one can obtain many more examples with $2n$ bands. In particular we have the following example, which was pointed out to us by Barry Simon.  Roughly speaking, one may take $n$ disjoint copies $\G_1, \dots, \G_n$ of a fixed finite graph $\G$ (and its graph Jacobi matrix), add different large multiples of the identity to the Jacobi matrix on each $\G_i$, and then connect the $\G_i$ by some edges with very small weights $a_\diredge$.  This produces a connected graph $\G'$ whose universal cover has $n$ times as many spectral bands as $A_\G$.  If $\G$ is a single vertex with a whole-loop and a half-loop with coefficients chosen so that the spectrum of $A_{\T(\G)}$ has $2$ bands by Theorem \ref{thm:fts}, then $A_{\T(\G')}$ has $2n$ bands in its spectrum.

\begin{remark}[Avoiding asymptotic freeness]
Alternatively, using the  amalgamated free product of graphs,  in Section \ref{sec:rademacherrepresentation} we will show that $A_{\calT}$ can be represented as an element in $M_n(\bC)\otimes \cred(\Gamma_m)$, from where  Proposition \ref{prop:generalizedsunada} can be easily deduced from $K$-theory results as explained above. 
\end{remark}

\section{The  Amalgamated Free Product for Graphs}
\label{secgraphproduct}
In Section \ref{subsecproductcombdesc} we give a combinatorial description of our graph product and show that it extends both the free product of graphs given in \cite{quenell1994combinatorics} and the notion of universal cover of a graph. At the end of Section \ref{subsecproductcombdesc} we state Theorem \ref{thmrefmain},  the main result of this section, which explains the connection with freeness with amalgamation and the use of having such a product.   

In Section \ref{secoperatorproduct} we delve into the technicalities of the product, showing the connection between the combinatorial description given in Section \ref{subsecproductcombdesc} and the construction of the amalgamated free product of Hilbert modules described in Section \ref{subsecHilbertProd}. 

To lighten notation, in the remaining of this section for a graph $\mathcal{G}$ we will write $\mathcal{G} = (\mathcal{V}, \mathcal{E}, e)$ to denote that  $\mathcal{V}(\mathcal{G})=\mathcal{V}$,   $E(\mathcal{G})=\mathcal{E}$, and $e$ is the root of $\mathcal{G}$. \marginpar{$(\mathcal{V}, \mathcal{E}, e)$}

\subsection{Combinatorial Description}
\label{subsecproductcombdesc}


  First, following the presentation in  \cite{accardi2007decompositions} we  recall  Quenell's free product of rooted graphs. Let $\mathcal{G}_1=(\mathcal{V}_1, \mathcal{E}_1, e_1),\, \dots,\, \mathcal{G}_n=(\mathcal{V}_n, \mathcal{E}_n, e_n)$ be rooted graphs and for every $i\in [n]$ denote $\overset{\circ}{\mathcal{V}_i} \myeq \mathcal{V}_i \setminus \{e_i\}$\marginpar{$\overset{\circ}{\mathcal{V}_i}$}. The \emph{free product of rooted graphs} will be denoted by $\ast \{\mathcal{G}_i\}_{i=1}^n$. 

The vertex set of $\ast \{\mathcal{G}_i\}_{i=1}^n$  will be the following set of words: \marginpar{$\ast \{\mathcal{V}_i\}_{i=1}^n$}
\begin{equation}
\label{eqfreeprodvert}
    \ast \{\mathcal{V}_i\}_{i=1}^n \myeq \{e \} \cup \{v_m\cdots v_1 : v_k \in \overset{\circ}{\mathcal{V}_{i_k}} \text{and } i_k \neq i_{k+1} \text{ for } 1 \le k \le m-1, m\geq 1 \}. 
\end{equation}
As in \cite{accardi2007decompositions} we think of $e$ as the empty word and  we allow the roots $e_i$ to appear in words also playing the role of an empty word, that is, if $w \in \ast \{\mathcal{V}_i\}_{i=1}^n$ then we have the convention $we_i = e_i w = w$. The set of edges  in the free product is defined as follows \marginpar{$\ast \{\mathcal{E}_i\}_{i=1}^n $}
\begin{equation}
\label{eqfreeprodedge}
\ast \{ \mathcal{E}_i\}_{i=1}^n = \left\{(vw, v'w) : (v, v') \in \cup_{i\in [n]} \mathcal{E}_i \text{ and } w, vw, v'w \in \ast \{ \calV_i\}_{i=1}^n \right\}. 
\end{equation}
 In summary  $\ast \{\mathcal{G}_{i}\}_{i=1}^n = (\ast \{\mathcal{V}_i\}_{i=1}^n, \ast \{\mathcal{E}_i\}_{i=1}^n , e)$\marginpar{$\ast \{\mathcal{G}_i\}_{i=1}^n$}. See Figure \ref{fig:freeproduct}  for an example.

 \begin{figure}
  \centering
     \includegraphics[scale=.22]{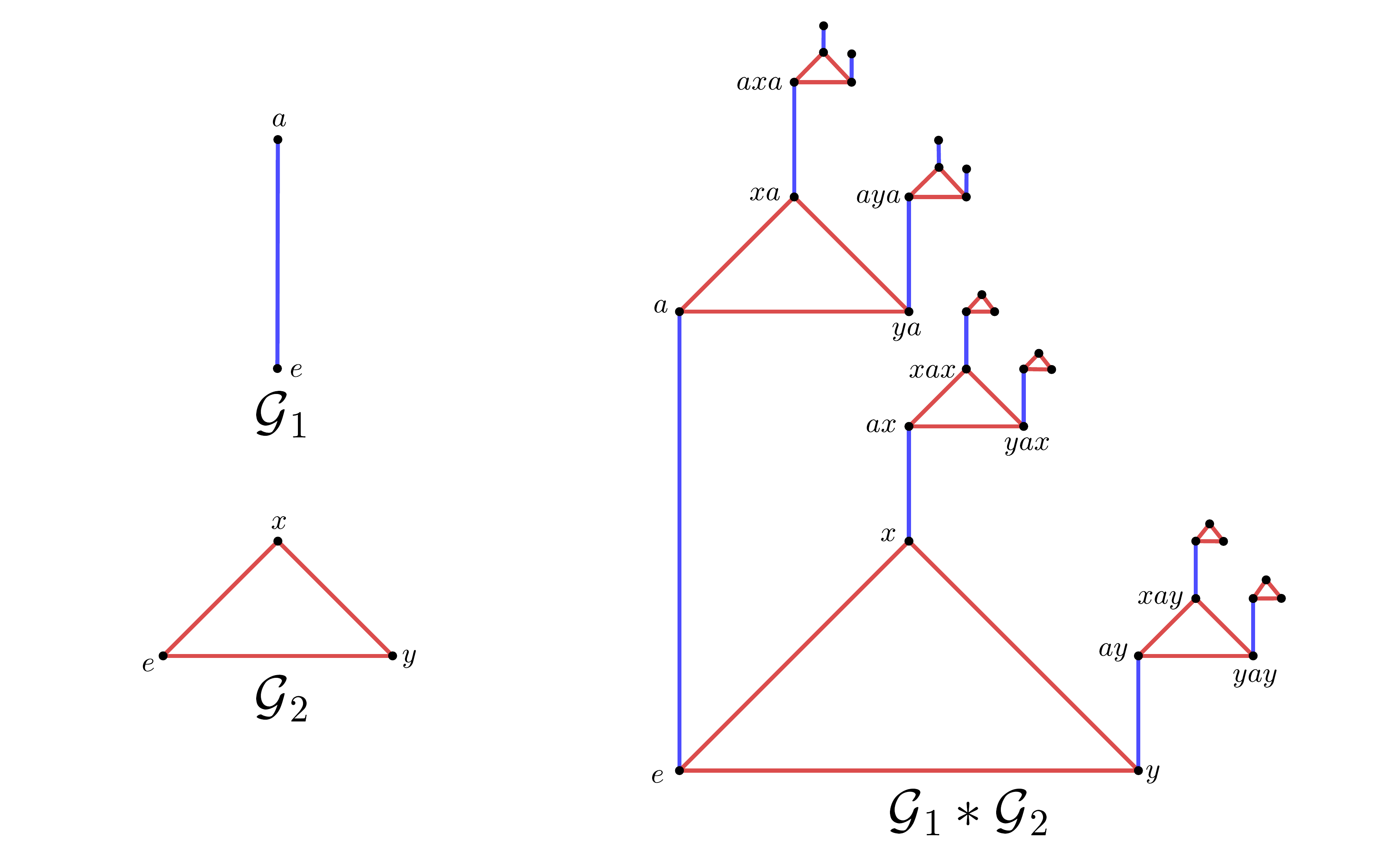}
 \caption{The free product of two graphs $\mathcal{G}_1$ and $\mathcal{G}_2$.  }
 \label{fig:freeproduct}
 \end{figure}

 We now expand the notion of free product of graphs by introducing the concept of \emph{relator graph}. 

 \begin{definition}[Relator graph]
Let $\mathcal{C}$ be a finite set. A relator graph with colors in $\mathcal{C}$ is a finite  graph $\mathcal{G}= (\mathcal{V}, \mathcal{E})$   together with an edge coloring $c: \mathcal{E} \to \mathcal{C}$. 
\end{definition}

Our \emph{amalgamated free product of rooted graphs}  $\mathcal{G}_1=(\mathcal{V}_1, \mathcal{E}_1, e_1), \dots, \mathcal{G}_n=(\mathcal{V}_n, \mathcal{E}_n, e_n)$ will be defined relative to a relator graph $(\mathcal{G}, c)$ and a coloring of the $\mathcal{G}_i$ compatible with the coloring of the relator graph. The colorings together with the graph  structure of the  relator graph  indicate how the $\mathcal{G}_i$ will be combined to form an infinite (possibly disconnected) multi-rooted graph. Thus, one may obtain different results even when the $\mathcal{G}_i$ are fixed, if the relator $(\mathcal{G}, c)$ is modified. 

\begin{definition}[Amalgamated free product for graphs]
\label{defmain}
First we describe the input data for the product: 
\begin{itemize}
    \item (Colored rooted graphs) Let $\mathcal{G}_1 = (\mathcal{V}_1, \mathcal{E}_1, e_1),  \dots , \mathcal{G}_n = (\mathcal{V}_n, \mathcal{E}_n, e_n)$ be finite rooted  graphs. Assume that each $\mathcal{G}_i$ comes equipped with an edge coloring $c_i : \mathcal{E}_i\to \mathcal{C}_i$ such that $\mathcal{C}_i \cap \mathcal{C}_j = \emptyset$ for every $i\neq j$.
    \item (Relator graph) Let $\mathcal{C} \myeq \bigcup_{i=1}^n \mathcal{C}_i$ and let $(\mathcal{G}, c)$ be a relator graph with colors in $\mathcal{C}$ and $k$ vertices. Denote $\mathcal{G} = (\mathcal{V}, \mathcal{E})$ and without loss of generality set $\mathcal{V} = [k]$.  
\end{itemize}
Then, the free product of the $(\mathcal{G}_i, c_i)$ with amalgamation over $(\mathcal{G}, c)$, which we denote by $*_{(\mathcal{G}, c)} \left\{(\mathcal{G}_i, c_i)\right\}_{i=1}^n$, is the $k$-rooted  graph  defined as follows: 
\begin{itemize}
    \item (Vertex set) For the vertex set we take $k$ copies of the free product of the $\calV_i$, that is 
    $$\ast_{(\mathcal{G}, c)} \{\mathcal{V}_i\}_{i=1}^n \myeq  [k] \times \ast \{\mathcal{V}_i\}_{i=1}^n.$$ 
    \item (Edge set) The colorings $c_i$ and $c$ are used to know which edges should be added:
    $$\ast_{(\mathcal{G}, c)} \{(\mathcal{E}_i, c_i)\}_{i=1}^n \myeq \left\{ \big( (l, vw), (l', v'w)\big) : (v, v')\in \mathcal{E}_j \text{ for  some } j \in [n], (l, l') \in \mathcal{E} \text{ and } c(l, l') = c_j(v, v')  \right\}, $$
    where of course we are assuming that $w, vw, v'w\in \ast \{\calV_i\}_{i=1}^n$. 
    \item (Roots) The root set of $\calG$ is $\{(l, e)\}_{l=1}^k$. 
\end{itemize}
\end{definition}

To lighten notation sometimes we will use $\ast_{(\calG, c)}\{\mathcal{G}_i\}_{i=1}^n$ and  $\ast_{(\calG, c)}\{\mathcal{E}_i\}_{i=1}^n$, as shorthand notations for $*_{(\mathcal{G}, c)} \left\{(\mathcal{G}_i, c_i)\right\}_{i=1}^n$ and $*_{(\mathcal{G}, c)} \left\{(\mathcal{E}_i, c_i)\right\}_{i=1}^n$, respectively. 

We first observe that this product generalizes different relevant constructions.

\begin{example}[Universal cover of a graph]
\label{exunivcover}
Let $\mathcal{G}=(\mathcal{V}, \mathcal{E})$ be a finite graph with $k$ vertices. 

First we consider the case where $\G$ has no loops. Let $m$ be the number of undirected edges in $\calG$ and let $c: \mathcal{E} \to [m]$ be a coloring satisfying $c(\diredge)=c(\che)$ and $c(\diredge_1)\neq c(\diredge_2)$ whenever $\diredge_1 \neq \diredge_2$ and $\che_1\neq \diredge_2$. Then choose $(\mathcal{G}, c)$ as the relator graph, and for every $i\in [m]$  let  $\mathcal{G}_i$  be a rooted graph consisting of two vertices (one of them being the root $e_i$) connected by a single  undirected edge with color $i$.   

Now, if $\G$ has loops one should replace every whole-loop for two half-loops. Then $m$ is defined as the number of undirected non-loop edges in $\calG$ plus the number of half-loops. Once again one considers a coloring $c: \mathcal{E} \to [m]$ with $c(\diredge_1)=c(\diredge_2)$ if and only if $\diredge_1=\diredge_2$ or $\che_1=\diredge_2$.  The rest is done as in the no-loop case.

In any case, with this construction, note that $\calTf:= \ast_{(\calG, c)} \{\calG_i\}_{i=1}^m$ has $k$ roots $(1, e), \dots, (k, e)$ and that the connected component of $\calTf$ containing $(i, e)$, say $\calT_i$, is isomorphic to $\calT(\calG)$. Moreover, $\calT_i$ and $\calT_j$ are disjoint whenever $i\neq j$ and the unique root $(i, e)$ in $\calT_i$ is in the fiber of  $i\in V(\calG)$. See Figure \ref{fig:universalcover} for an example. 
\end{example}

 \begin{figure}[h]
  \centering
\includegraphics[scale=.29]{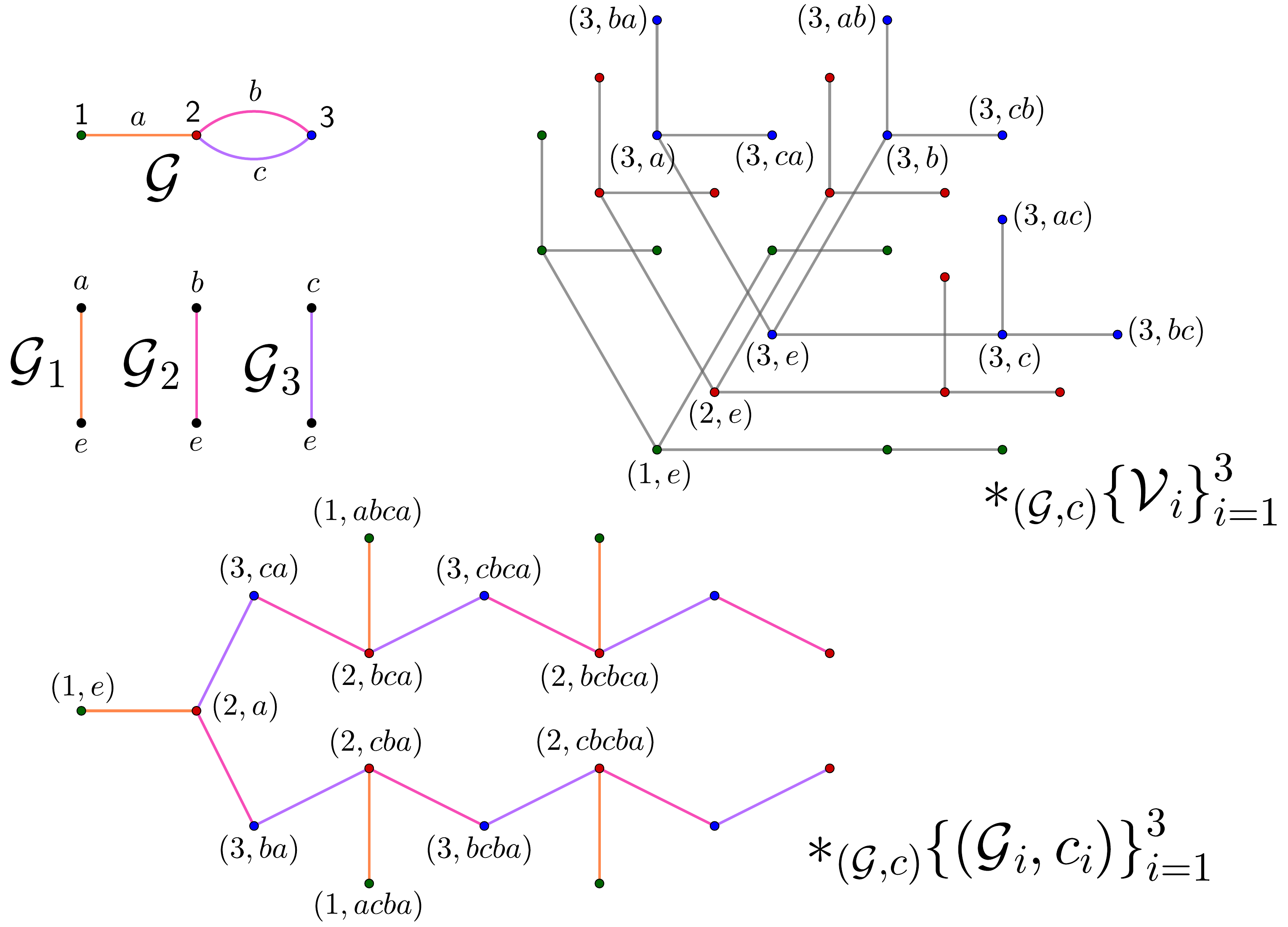}
\caption{Here we illustrate the construction in Example \ref{exunivcover} in the particular case of a graph $\calG$ with three vertices and three edges. On the top-left we have the graph $\calG$ and the graphs $\calG_1, \calG_2, \calG_3$ defined in the construction. On the top-right a part of the vertex set $\ast_{(\calG, c)}\{\calV_i\}_{i=1}^n$ is shown, which in general is the union of the vertex sets of $k$ disjoint $m$-regular trees, where $m$ is the number of undirected edges in $\calG$. The vertices of these $m$-regular trees correspond to sequences of edges in $\calG$ containing all non-backtracking walks, and the connected components of the amalgamated graph product $\ast_{(\calG, c)}\{\calG_i\}_{i=1}^n$ (which are copies of $\calT$) move across the different $k$ levels in the vertex set, as it can be seen in the lower part of the image, which is part of the connected component containing the root $(1, e)$.   }
 \label{fig:universalcover}
\end{figure}

\begin{example}[Free products of graphs]
Given $n$ finite rooted graphs $\mathcal{G}_1 = (\mathcal{V}_1, \mathcal{E}_1, e_1),  \dots , \mathcal{G}_n = (\mathcal{V}_n, \mathcal{E}_n, e_n)$, one can construct its free product by taking $\calG=(\calV, \calE)$ to be the graph that has only one vertex and $n$ half-loops around it. Then, for any $i\in [n]$, let $\calC_i$ be the singleton $\{i\}$, that is $c_i$ will color all edges in $\calG_i$ with color $i$. And, $c: \calC \to [n]$ will color each half-loop in $\calG$ with a different color.  

With this setup it is easy to see that 
\[\ast_{(\mathcal{G}, c)} \{(\mathcal{G}_i, c_i)\}_{i=1}^n =  \ast \{\mathcal{G}_i\}_{i=1}^n.\] 
In this case, because the relator graph has only one vertex the amalgamated product has only one root.  Hence, this construction generalizes Quenell's free product of graphs. 
\end{example}

\begin{example}[Cayley graphs of amalgamated free products of groups]
 In Section \ref{sec:cayley-amalgamated} we will show that if $G_1, \dots, G_n$ are finite groups with symmetric generating sets $S_1, \dots, S_n$ and a common subgroup $H$, then, the left Cayley graph $\Gamma(\ast_H \{G_i\}_{i=1}^n, S)$ (where $S=\bigcup_{i=1}^nS_i$) can be constructed  explicitly as an amalgamated free product of graphs. In our construction, $H$ is taken to be the vertex set of the relator graph, and for every $i\in [n]$ a set of representatives for the right cosets of $H$ in $G_i$ is taken as the vertex set of the  graph $\calG_i$. Then, the edges in $\calG$ and $\calG_i$,  and the colorings $c$ and $c_i$ encode the way in which the generators interact with the elements in $H$ and the cosets of $H$ in $G_i$. 
 \end{example}

So far all the discussion in this section has been at the level of combinatorics; however, our main result here is about Jacobi operators. It is then necessary to clarify how  edge weights and vertex potentials can be incorporated in our framework.

\begin{definition}[Lifting coefficients]
\label{def:liftweights}
If the relator graph $\calG=(\calV, c)$ has edge weights $a$ and vertex potential $b$, then the graph $\ast_{(\calG, c)} \{\calG_i\}_{i=1}^n$ can be endowed with ``periodic" edge weights $\tilde{a}$ and vertex potential $\tilde{b}$ in a natural way as follows: 
$$\tilde{a}_{\big( (l, vw), (l', v'w)\big)} := a_{(l, l')} \quad \text{and} \quad \tilde{b}_{(l, w)}:= b_l,$$
for all $\big( (l, vw), (l', v'w)\big)\in \ast_{(\calG, c)} \{\calE_i\}_{i=1}^n$ and $(l, w)\in \ast_{(\calG, c)} \{\calV_i\}_{i=1}^n$. 
\end{definition}

When working with Jacobi operators on amalgamated free products of graphs it will be convenient to have the following notation. 

\begin{definition}[Notation]
\label{def:notation}
Fix $\mathcal{G}= (\mathcal{V}, \mathcal{E})$  a graph and $c: \mathcal{E} \to \mathcal{C}$  a coloring of the edges. For each $\alpha \in \mathcal{C}$ we denote by $\mathcal{G}[\alpha]$  \marginpar{$\G[\alpha]$} the graph with the same vertex set as $\mathcal{G}$ but that only includes those edges of color $\alpha$, i.e. $\mathcal{G} = (\mathcal{V}, c^{-1}(\alpha))$. If $\calG$ has edge weights and vertex potential, then $\calG[\alpha]$ inherits these in the obvious way. 
\end{definition}

We are now ready to state the main theorem. 

\begin{theorem}
\label{thmrefmain}
 Use the notation in Definition \ref{defmain} and Definition \ref{def:notation}, and assume that the relator graph $\calG$ is equipped with edge weights $a$ and  vertex potential $b$. Let $J$ denote the Jacobi operator on $\ast_{(\calG, c)} \{(\calG_i, c_i)\}_{i=1}^n$ where the edge weights and vertex potential are defined as in Definition \ref{def:liftweights}.

 For every $i\in[n]$   and every $\alpha \in \mathcal{C}_i$ let $X_\alpha$ and $A_\alpha$ be the Jacobi matrix of  $\mathcal{G}[\alpha]$ and the adjacency matrix of $\mathcal{G}_i[\alpha]$ respectively. Then, there exists an operator-valued probability space $(\mathcal{A}, E, M_k(\C))$, and random variables $D, T_1, \dots, T_n \in \mathcal{A}$ with the following properties: 
\begin{enumerate}[i)]
    \item \label{mainthm:pathcounting} (Path counting) If $T:=D+T_1+\cdots +T_n$, then for every natural number $p$ and $i, j \in [k]$, we have that 
    $$E[T^p](i, j) = J^p\big((e, i), (e, j)\big).$$
    \item \label{mainthm:freenesswithamalgam} (Freeness with amalgamation) The $T_i$ are free with amalgamation over $M_k(\C)$ with respect to $E$. Moreover $D$ is free with amalgamation over $M_k(\C)$ from any other element in $\calA$. 
    \item \label{mainthm:equalindist} (Equal in distribution) Fix $i \in [n]$, let $m_i \myeq |\mathcal{V}_i| $ and note that $X_\alpha \otimes A_\alpha \in M_k(\mathbb{C})\otimes M_{m_i}(\mathbb{C})$ for every $\alpha \in \mathcal{C}_i$. Then, if we view $ M_{m_i}(\mathbb{C})$ as a non-commutative probability space with the functional induced by the root of $\mathcal{G}_i$, and from there construct the $M_k(\mathbb{C})$-valued probability space $(M_k(\mathbb{C})\otimes M_{m_i}(\mathbb{C}), E_i, M_k(\mathbb{C}) )$ as in Example \ref{exmatrixopval}, we have that 
    $$\sum_{\alpha\in \mathcal{C}_i} X_\alpha \otimes A_\alpha$$
    distributes with respect to $E_i$ as $T_i$ with respect to $E$. Moreover, $D$ distributes with respect to $E$ as the matrix $\mathrm{diag}(b_1, \dots, b_k)$ distributes in the space $(M_k(\bC), \mathrm{Id}, M_k(\bC))$. 
\end{enumerate}
\end{theorem}

\begin{remark}[Scalar-valued distributions]
Note that item (\ref{mainthm:pathcounting}) in the above theorem tells us that the $k\times k$ matrices obtained as the operator-valued moments of $T$ encode all of the information provided by walks between any two roots of the amalgamated free graph product. Hence, for every $i\in [k]$, if one composes the conditional expectation $E$ with the functional $\func_i: M_k(\bC)\to \bC$ given by $\func_i(X) :=X(i, i)$, then one can understand the spectral measure of the Jacobi matrix $J$ on $\ast_{(\calG, c)}\{(\calG_i, c_i)\}$ associated to the root $(e, i)$.
\end{remark}
 To see Theorem \ref{thmrefmain} in action we refer the reader to Section \ref{sec:rademacherrepresentation}, where we combine Example \ref{exunivcover} and Theorem \ref{thmrefmain} to obtain a $C^*$-representation of periodic Jacobi operators on universal covers. 

\subsection{Proof of Theorem \ref{thmrefmain}}
\label{secoperatorproduct}

Since we care about the explicit action of the operators in question we think of the  $X_\alpha$ as elements in $B(\ell^2(\mathcal{V}))$ and of the $A_\alpha$ as  elements of $B(\ell^2(\mathcal{V}_i))$ if $\alpha \in \mathcal{C}_i$. To lighten notation let $$\mathcal{B} \myeq B(\ell^2(\mathcal{V})) \cong M_k(\C). $$
and because of the last equivalence let $I_k$ denote the unit of $\calB$. In what follows the notion of tensor will be used in different ways, we will use $\otimes_\mathcal{B}, \otimes_{\mathbb{C}}$ and $\otimes$  to denote the tensor of $\mathcal{B}$-modules, $\mathbb{C}$-algebras and $\mathbb{C}$-vector spaces respectively. Although, to avoid overloading, the subscript in $\otimes_\mathcal{B}$ will be dropped when using it to denote pure tensor elements in a $\mathcal{B}$-module tensor product.

For clarity of exposition we divide the proof in several steps. The first one consists in defining the building blocks $\mathcal{H}_i$ to which we apply the construction of the amalgamated free product for Hilbert modules described in Section \ref{subsecHilbertProd}. 
\bigskip

\noindent \emph{Step 1: Construction of the operator-valued probability spaces.}  For every $i\in [n]$ let $$\mathcal{H}_i \myeq \mathcal{B} \otimes \ell^2(\mathcal{V}_i),$$ and note that this is a  $\calB$-bimodule with the right and left actions of $\mathcal{B}$ on pure tensors given by
\begin{equation}
\label{eqdefrightaction}
(X\otimes \eta) Y \myeq XY \otimes \eta \quad \text{and} \quad Y(X\otimes \eta) = YX \otimes \eta \quad \forall X, Y \in \mathcal{B}, \, \forall \eta \in \ell^2(\mathcal{V}_i),  
\end{equation}
and then extended by linearity. Moreover, we can turn each $\mathcal{H}_i$ into a right   Hilbert $\mathcal{B}$-module by using the $\mathcal{B}$-valued inner product determined by
\begin{equation}
\label{eqdeftensor}
\langle X\otimes \eta, Y\otimes \zeta \rangle_{\mathcal{H}_i} \myeq \langle \eta, \zeta \rangle_{\ell^2(\mathcal{V}_i)} X^* Y, \quad \forall X, Y \in \mathcal{B}, \, \forall \eta, \zeta \in \ell^2(\mathcal{V}_i).
\end{equation}
In fact, because we are working with finite-dimensional spaces, operators are automatically bounded, so this turns $\calH_i$ into a Hilbert $\calB$-bimodule where, as expected, the representation $\pi_i : \calB \to \widetilde{B}(\calH_i)$ is given by the left action of $\calB$ on $\calH_i$. 

Now recall that for every $i\in [n]$,  $e_i \in \mathcal{V}_i$ denotes the root of $\mathcal{G}_i$, so $\delta_{e_i} $ is the distinguished vector in the Hilbert space $\ell^2(\mathcal{V}_i)$. Following the construction outlined in Section \ref{subsecHilbertProd},  we set $\xi_i \myeq I_k \otimes \delta_{e_i}$ to be the distinguished vector of $\mathcal{H}_i$. It is easy to see that $\xi_i$ is a unit central vector,  and moreover, by (\ref{eqdefrightaction}),  $\xi_i \mathcal{B} = \mathcal{B}\otimes \delta_{e_i}$. Hence, the decomposition $\mathcal{H}_i= \xi_i \mathcal{B} \oplus \overset{\circ}{\mathcal{H}_i}$ satisfies $$\overset{\circ}{\mathcal{H}_i} = \mathcal{B} \otimes \ell^2(\overset{\circ}{\mathcal{V}_i} ),$$ where as before $\overset{\circ}{\mathcal{V}_i} \myeq \mathcal{V}_i\setminus \{e_i\}$. 

As mentioned in Example \ref{exhilbmods}, the $\ast$-algebra of bounded, adjointable, right $\mathcal{B}$-linear operators $\widetilde{B}(\mathcal{H}_i)$ is a $\mathcal{B}$-valued probability space with the conditional expectation defined by $E_i(S) \myeq \langle \xi_i, S\xi_i\rangle_{\mathcal{H}_i}$ for every $S\in \widetilde{B}(\mathcal{H}_i)$. But since $\mathcal{B}$ and $B(\ell^2(\mathcal{V}_i))$ are finite-dimensional, we can use the identification $$\widetilde{B}(\mathcal{H}_i) \cong \mathcal{B} \otimes_\mathbb{C} B(\ell^2(\mathcal{V}_i)),$$ by letting each $X \otimes A \in \mathcal{B}\otimes_{\mathbb{C}} B(\ell^2(\mathcal{V}_i))$ act on $\mathcal{H}_i$ by 
\begin{equation}
\label{eqdefactionoftensor}
X\otimes A (X' \otimes \eta) = XX' \otimes A \eta, \quad \forall X' \in \mathcal{B}, \forall \eta \in \ell^2(\mathcal{V}_i), 
\end{equation}
and extending this definition by linearity to all elements in $\mathcal{B}\otimes_{\mathbb{C}} B(\ell^2(\mathcal{V}_i))$. Now note that under this identification, the $E_i$ we just defined also coincide with the construction given in Example \ref{exmatrixopval}, since for a pure tensor $X \otimes A \in \mathcal{B} \otimes_{\mathbb{C}} B(\ell^2(\mathcal{V}_i)) $ we have, by (\ref{eqdeftensor}) and (\ref{eqdefactionoftensor}), that 
$$E_i( X \otimes A) = \langle \xi_i , X \otimes A (\xi_i) \rangle_{\mathcal{H}_i} = \langle I_k \otimes \delta_{e_i}, X \otimes A (I_k\otimes \delta_{e_i}) \rangle_{\mathcal{H}_i} = \langle I_k\otimes \delta_{e_i} , X \otimes A \delta_{e_i} \rangle_{\mathcal{H}_i} = \langle \delta_{e_i}, A \delta_{e_i} \rangle_{\ell^2(\mathcal{V}_i)} X.  $$
\bigskip

\noindent \emph{Step 2: Defining $\A$, $\func$, and the $T_i$.} Note that $\B$ and the Hilbert  $\B$-bimodules $\mathcal{H}_i$ satisfy all the conditions required in Section \ref{subsecHilbertProd}, so we can go ahead with the construction of the amalgamated free product.  Define
\begin{align*}
\mathcal{H} &\myeq \xi\mathcal{B} \oplus \bigoplus_{i_1\neq i_2 \neq  \cdots \neq i_m} \overset{\circ}{\mathcal{H}_{i_1}} \otimes_\mathcal{B} \cdots \otimes_\mathcal{B} \overset{\circ}{\mathcal{H}_{i_m}},
\end{align*}
where $\xi = I_k \otimes \delta_e$ and $\delta_e$ is the vector resulting from identifying the $\delta_{e_i}$, and let $\calA \myeq  \widetilde{B}(\calH)$.  Then, for every $i \in [n]$,  set $\lambda_i : \widetilde{B}(\mathcal{H}_i) \to \calA$ to be the inclusions described in Section \ref{subsecHilbertProd} and define
$$T_i \myeq \sum_{\alpha \in \mathcal{C}_i} \lambda_i(X_\alpha \otimes A_\alpha).$$


With these definitions, from Theorem \ref{propamalgfreeprod} it is clear that the $T_i$ are free with amalgamation over $\B$ with respect to the conditional expectation $E$, and that each $T_i$ has the same distribution as
$$\sum_{\alpha\in \mathcal{C}_i} X_\alpha \otimes A_\alpha.$$
This proves the claims about the $T_i$ made in items \ref{mainthm:freenesswithamalgam}) and \ref{mainthm:equalindist}) in Theorem \ref{thmrefmain}. Before completing the proof of the theorem  we will understand better the structure of $\calH$. 
\bigskip

 \noindent \emph{Step 3: Rewriting $\calH$ and defining $D$.}  First recall that
\begin{align*}
\mathcal{H} &= \xi\mathcal{B} \oplus \bigoplus_{i_1\neq i_2 \neq  \cdots \neq i_m} \overset{\circ}{\mathcal{H}_{i_1}} \otimes_\mathcal{B} \cdots \otimes_\mathcal{B} \overset{\circ}{\mathcal{H}_{i_m}} 
\\ &= \mathcal{B}\otimes \delta_{e} \oplus \bigoplus_{i_1\neq i_2 \neq \cdots \neq i_m} (\mathcal{B}\otimes \ell^2(\overset{\circ}{\mathcal{V}}_{i_1})) \otimes_\mathcal{B} \cdots \otimes_\mathcal{B} (\mathcal{B}\otimes \ell^2(\overset{\circ}{\mathcal{V}}_{i_m})) 
\nonumber    
\end{align*}
and note that as $\B$-bimodules we have the identification
$$(\mathcal{B}\otimes \ell^2(\overset{\circ}{\mathcal{V}}_{i_1})) \otimes_\mathcal{B} \cdots \otimes_\mathcal{B} (\mathcal{B}\otimes \ell^2(\overset{\circ}{\mathcal{V}}_{i_m}))  \cong \mathcal{B} \otimes \ell^2(\overset{\circ}{\mathcal{V}}_{i_1})\otimes \cdots \otimes \ell^2(\overset{\circ}{\mathcal{V}}_{i_m}),$$
for every $i_1\neq i_2 \neq \cdots \neq i_m$. So we can view $\calH$ as follows 
\begin{align*}
    \mathcal{H} & \cong \mathcal{B} \otimes \delta_e \oplus \bigoplus_{i_1\neq i_2 \neq  \cdots \neq i_m} \mathcal{B} \otimes \ell^2(\overset{\circ}{\mathcal{V}}_{i_1})\otimes \cdots \otimes \ell^2(\overset{\circ}{\mathcal{V}}_{i_m})
\\ & \cong \calB \otimes \calK 
\end{align*}
where $\calK := \ast_{\C} \{\ell^2(\calV_i)\}_{i=1}^n$, and the last equivalence (which is clearly true at the level of $\calB$-bimodules for the algebraic direct sum) holds at the level of Hilbert $\calB$-bimodules because $\calB$ is finite-dimensional. We can then use this identification to define 
$$D:= \text{diag}(b_1, \dots, b_k)\otimes \text{Id}_{\calK}.$$
It is clear that $D\in \calA$ and that it is free with amalgamation over $\calB$ from any other element in $\calA$, as stated in \ref{mainthm:freenesswithamalgam}).  
\bigskip

\noindent \emph{Step 4: Interpreting $T$ as the Jacobi operator.} Define $T:=D+T_1+\cdots +T_n$, and for every $i, j\in [k]$ let $\edge_{ij}$ denote the operator in $\calB$ corresponding to the matrix in $M_k(\mathbb{C})$ with a 1 in the entry $(i, j)$ and 0 everywhere else. Observe that
\begin{equation}
\label{eq:matrixmult}
\edge_{ij} \edge_{lm} = \begin{cases} \edge_{im} & \text{if }j =l, \\ 0 & \text{otherwise}, \end{cases} 
\end{equation}
and hence the $l$-th column space $C_l := \spa \{\Delta_{i l} : i\in [k]\} $ is a left ideal of $\calB$ for every $l \in [k]$ (where the span is taken at the level of $\C$-vector spaces). 

 The main idea in this step is to find a $\C$-vector subspace $W$ of $\mathcal{H}$  that is $T$-invariant and such that a $\C$-basis of $W$ can be bijected with $\ast_{(\G, c)} \{\calV_i\}_{i=1}^n$ so that it is clear that $T$ acts on $W$ in the same way that the Jacobi operator acts on $\ell^2(\ast_{(\G, c)} \{\calV_i\}_{i=1}^n)$.  With this end, recall that $\calK := \ast_{\C} \{\ell^2(\calV_i)\}_{i=1}^n$ and for every $l\in [k]$ define 
 $$W_l:= C_l \otimes \calK,$$
 and consider the $\C$-basis for $W_l$ given by 
$$\Theta_l \myeq \{ \edge_{il} \otimes \delta_e  : i \in [k]\} \cup \{ \edge_{il} \otimes \delta_{v_m}\otimes \cdots \otimes \delta_{v_1} : i \in [k], v_1\in \overset{\circ}{\mathcal{V}}_{i_1}, \dots, v_m\in \overset{\circ}{\mathcal{V}}_{i_m},    i_1 \neq i_{2} \neq \cdots \neq i_m\}.  $$
Now, because $C_l$ is a left ideal of $\calB$ it is clear that $W_l$ is left-invariant under the action of each $T_j$ and $D$, and hence it is invariant under the action of $T$. Then, we can decompose $$I_k \otimes \delta_e = \sum_{i=1}^k  \Delta_{ii} \otimes \delta_e, $$
and it will follow that $ T^p(\Delta_{ll}\otimes \delta_e)\in W_l$ for every $l$ and every $p$. On the other hand
$$E[T^p] = \langle I_k\otimes \delta_e , T^p (I_k\otimes \delta_e) \rangle_{\calH} = \sum_{l=1}^k \sum_{i=1}^k \langle \Delta_{ii}\otimes \delta_e, T^p(\Delta_{ll}\otimes \delta_e) \rangle_{\calH},$$
and because of the construction of $\langle \cdot , \cdot \rangle$, and since $C_l$ is a left ideal, it is easy to see that $$\sum_{i=1}^k \langle \Delta_{ii}\otimes \delta_e, T^p(\Delta_{ll}\otimes \delta_e) \rangle_{\calH}\in C_l, \quad \forall l\in [k].$$  Similarly we can define the row space $R_i= \spa\{\Delta_{ij} : j\in [k]\}$ which is a right ideal, and use the same reasoning to conclude that $\langle \Delta_{ii}\otimes \delta_e, T^p(\Delta_{ll}\otimes \delta_e) \rangle_{\calH} \in R_i$ for every $i$. Hence, if we view $E[T^p]$ as a $k\times k$ matrix we get that
\begin{equation}
\label{eq:entryil}
E[T^p](i, l) = \langle \Delta_{ii} \otimes \delta_e, T^p(\Delta_{ll}\otimes \delta_e) \rangle_\calH.
\end{equation}
To analyze the above expression, for any word $w=v_m \cdots v_1\in \ast\{\calV_i\}$ denote $\delta_w := \delta_{v_m} \otimes \cdots \otimes \delta_{v_1}$ and note that
$$\Delta_{sl} \otimes \delta_w \mapsto (s, w)$$
is a bijection between $\Theta_l$ and the vertex set $\ast_{(\calG, c)} \{\calV_i\}$. Moreover, for every $j\in [n]$, any  $w\in \ast\{\calV_i\}$ and any  $v\in \calV_j$ it is clear that
\begin{align*}
T_j(\Delta_{sl}\otimes \delta_{vw}) & = \sum_{\alpha\in \calC_j}\, \sum_{\diredge_1\in \sigma_{\calE}(s),\, \diredge_2\in \sigma_{\calE_j}(v)} X_{\alpha}\big(\sigma(\diredge_1), \tau(\diredge_1)\big) A_{\alpha} \big( \sigma(\diredge_2), \tau(\diredge_2) \big)\cdot \Delta_{\tau(\diredge_1)l}\otimes \delta_{\tau(\diredge_2)w} 
  \\  & =  \sum_{\substack{s'\in \calV, \, v'\in \calV_j \\ (s, s')\in \calE,\, (v, v')\in \calE_j,\, c(s, v')=c_j(v, v')}} a_{(s, s')} \cdot \Delta_{s'l}\otimes \delta_{v'w} 
\end{align*}
where in the first line we have used the notation $\sigma_\calE$ and $\sigma_{\calE_j}$, to emphasize that $\sigma_{\calE}(s):=\sigma(s)\subset \calE$ and $\sigma_{\calE_j}(v):=\sigma(v)\subset\calE_j$. It is also clear that
$$D (\Delta_{sl}\otimes \delta_w) = b_s \Delta_{sl}\otimes \delta_w. $$
From the above it follows that $T$ acts on $\Theta_l$ in the same way in which $J$ acts on $\ast_{(\calG, c)} \{\calV_i\}_{i=1}^n$ as we wanted to show. 

 Therefore 
$$T^p(\Delta_{ll}\otimes \delta_e) = \sum c_{i, v_1, \dots, v_m} \Delta_{i, l} \otimes \delta_{v_m}\otimes \cdots \delta_{v_1} $$
where the sum ranges over all tuples $(i, v_m, \dots, v_1)$ for which there is a (possibly lazy) walk of length $p$ in $\ast_{(\calG, c)}\{\calG_i\}_{i=1}^n$ between the vertices $(l, e)$ and $(i, v_m \cdots v_1)$, where the coefficients $c_{i, v_1, \dots, v_m}$ are determined by the edge weights and vertex potential. So, recalling (\ref{eq:entryil}) and the definition of the inner product $\langle \cdot , \cdot \rangle_{\calH}$, we can conclude that $E[T^p](i, l)$ is the sum of weighted walks of length $p$ between $(l, e)$ and $(i, e)$, as we wanted to show.    

\subsection{Amalgamated Free Product of Groups as Amalgamated Free Product of Graphs}
\label{sec:cayley-amalgamated}

Here we show that Cayley graphs of amalgamated free products of finite groups can be realized as an amalgamated free product of finite graphs. Note that this is not an obvious fact and requires a proof. In the setup of Observation \ref{obs:cayleydecomp}, freeness with amalgamation appears because we are working in a group $C^*$-algebra of an amalgamated free group product. In contrast, in the case of the amalgamated free graph product, the freeness with amalgamation comes from the fact that we are working in a tensor algebra. 


In this section we will consider finite groups $G_1, \dots, G_n$\marginpar{$G_i, H$}   with a common subgroup $H$. And we will work with the \emph{right} cosets of $H$ in each of this groups. For every $i$ we will fix $R_i$ with $e_{G_i}\in R_i$ to be a set of representatives of the right cosets of $H$ in $G_i$, and heavily exploit the following structural theorem for amalgamated graph products (see \cite[Section 1.2, Theorem 1]{serre1980sl}). 

\begin{theorem}[Structure of amalgamated free products of groups]
\label{thm:normalform}
Let $H, G_i$ and $R_i$ be as above. Then, every element $g$ of the amalgamated free product $G:= \ast_H \{G_i\}_{i=1}^n$ has a unique representation of the form
$$g=hr_m \cdots r_1$$
where $h\in H$, $r_j\in R_{i_j}\setminus \{e_{G_{i_j}}\}$ and $i_{j+1}\neq i_{j}$. This representation is called the \emph{normal form} of $g$. 
\end{theorem}

Although  we have chosen to consider \emph{right} cosets we will be interested in constructing \emph{left} Cayley graphs (the reason for this discrepancy will become apparent below). As in Section \ref{subsecfreenesswithamalg}, given a group $G$ and a generating set $S$, we denote the \emph{left} Cayley graph of $G$ with respect to $S$ by $\Gamma(G, S)$.   Now, for every $i$ fix $S_i$ a symmetric generating set of $G_i$, that is assume that for every $s\in S_i$ one also has $s^{-1}\in S_i$. The symmetry of the symmetric sets will ensure that the constructed graphs are undirected. 

Below we will show how to construct the Cayley graph $\Gamma(\ast_H \{G_i\}_{i=1}^n, S)$ for $S=\bigcup_{i=1}^n S_i$ as an amalgamated free  product of  graphs $\{\calG_i\}_{i=1}^n$ over some relator graph $(\calG, c)$. But, before describing the construction in detail we provide some intuition on why such construction should exist. First, note that for every $i$, we can view $G_i$ as the amalgamated free product of the singleton $\{G_i\}$ over $H$, and apply Theorem \ref{thm:normalform} with $n=1$ to obtain that every $g\in G_i$ can be uniquely represented as $g=hr$ with $h\in H$ and $r\in R_i$. This induces a bijection between $G_i$ and $H\times R_i$, which in turn induces an isomorphism between the vector spaces $\ell^2(G)$ and $\ell^2(H)\otimes \ell^2(R_i)$. On the other hand, the Jacobi operator on $\Gamma(G, S)$, is constructed via the operators $\lambda_s : \ell^2(G_i)\to \ell^2(G_i)$ given by the left regular representation of $G_i$. The idea is that, because $\ell^2(G_i)\cong \ell^2(H)\otimes \ell^2(R_i)$, we can view $\lambda_s$ as an operator over the latter, and decompose it as a sum of pure tensors in $B(\ell^2(H))\otimes B(\ell^2(R_i))$. Once this is done, one is precisely in the setup of the amalgamated graph product. Below, we exemplify this procedure in the case of two small  Abelian groups, and we later present the general construction. 

\begin{example} \label{ex:sl2z} 
  Here we provide an explicit construction  for the Jacobi operator on the Cayley graph of $SL(2, \mathbb{Z})$ with respect to some canonical generators. It is well-known that this group can be viewed as the free product of $\mathbb{Z}_4$ and $\mathbb{Z}_6$ with amalgamation over $\mathbb{Z}_2$, and has the finite presentation 
\[SL(2, \mathbb{Z}) \cong \langle x, y \mid x^4 = y^6 = 1, x^2 = y^3\rangle. \]  
Let $\lambda_{\mathbb{Z}_4}, \lambda_{\mathbb{Z}_6}$ denote the left regular representations of $\mathbb{Z}_4$ and $\mathbb{Z}_6$, respectively, and note that the cosets of (the inclusion of) $\bZ_2$ in $\bZ_4$ and $\bZ_6$  are $\{\{0, 2\}, \{1, 3\}\}$ and $\{\{0, 3\}, \{1, 4\}, \{2, 5\}\}$ respectively.  Then fix the generating sets $S_1=\{1, -1\}\subset \bZ_4$ and $S_2=\{-1, 1\}\subset \bZ_6$. 

Now consider the algebra inclusions (constructed as above) $\chi_1: \bC[\bZ_4]\to B(\ell^2(\bZ_2))\otimes B(\ell^2(\{0, 2\}, \{1, 3\}))$ and $\chi_2: \bC[\bZ_6] \to  B(\ell^2(\mathbb{Z}_2)) \otimes B(\ell^2(\{0, 3\}, \{1, 4\}, \{2, 5\}))$. We start by evaluating  
$\chi_1 $ at the element $\lambda_{\mathbb{Z}_4}(1)+ \lambda_{\mathbb{Z}_4}(-1)$ (which is the Jacobi operator of $\Gamma(\bZ_4, S_1)$ )  where we use the coset representatives $R = \{0, 1\}$. It is easily seen that
$$
\chi_1(\lambda_{\mathbb{Z}_4}(1)+ \lambda_{\mathbb{Z}_4}(-1)) = \left(\begin{array}{cc} 1 & 0 \\ 0 & 1 \end{array}\right) \otimes \left(\begin{array}{cc} 0 & 1 \\ 1 & 0 \end{array}\right) + \left(\begin{array}{cc} 0 & 1 \\ 1 & 0 \end{array}\right) \otimes \left(\begin{array}{cc} 0 & 1 \\ 1 & 0 \end{array}\right) .
$$

Similarly, we evaluate $\chi_2$ on $\lambda_{\mathbb{Z}_6}(1)+ \lambda_{\mathbb{Z}_6}(-1)$ where the set of coset representatives $R = \{0, 1, 2\}$ is used, to obtain
$$\chi_2(\lambda_{\mathbb{Z}_6}(1)+ \lambda_{\mathbb{Z}_6}(-1)) = \left(\begin{array}{cc} 1 & 0 \\ 0 & 1 \end{array}\right) \otimes \left(\begin{array}{ccc} 0 & 1 & 0\\ 1 & 0 & 1 \\ 0 & 1 & 0  \end{array}\right) + \left(\begin{array}{cc} 0 & 1 \\ 1 & 0 \end{array}\right) \otimes \left(\begin{array}{ccc} 0 & 0 & 1 \\ 0 & 0 & 0 \\ 1 & 0 & 0 \end{array}\right) .$$
With the above, one can  decompose  the Jacobi operator on the Cayley graph of $SL(2, \bZ)$  as a sum of two random variables that are free with amalgamation over $B(\ell^2(\bZ_2))$ by  applying the construction in Section \ref{subsecHilbertProd} to the operators obtained above. Equivalently, this gives a way to construct the Cayley graph of $SL(2, \bZ)$ as an amalgamated free product of finite graphs. 

Note that the above decompositions into pure tensors are very simple, and hence a construction of the Cayley graph via our graph product can be obtained using few colors.  This simplicity is due to the Abelian nature of the groups considered here. In general, the situation is more complicated and as we will see below more colors are needed.  
\end{example}

\paragraph{General construction.} We  produce a family of colored graphs $\{(\mathcal{G}_i, c_i)\}_{i=1}^n$ and a relator graph $(\mathcal{G}, c)$ as follows: 
\begin{enumerate}[i)]
    \item \emph{Vertices:} Set $V(\calG)=H$ and for every $i$ put $V(\calG_i)=R_i$. 
    \item \emph{Roots:} For every $i$ root $\calG_i$ at $e_{G_i}$. 
    \item \label{item:colorsets} \emph{Color sets:} For every $i$ we will choose $$\calC_i =S_i\times H \times R_i/\sim$$ 
    where $(s, h, r)\sim (s', h', r')$ if $(s, h, r) =(s', h', r')$ or if $s'=s^{-1}$ and $shr= h'r'$. 
    \item \label{item:edges} \emph{Edges:} For every $i$ and every $(h, h')\in H^2$ and $(r, r')\in R_i^2$, put a \emph{directed} edge in $\calG$ from $h$ to $h'$, and in $\calG_i$ from $r$ and $r'$, both of color $[(s, h, r)]$, if $shr = h'r'$.  
\end{enumerate}
\bigskip

\begin{proposition}
The graphs $\calG$ and $\calG_1, \dots, \calG_n$ are undirected, the colorings of the edges are well defined, and the graphs $\Gamma(\ast_{H} \{G_i\}_{i=1}^n, S)$ and $\ast_{(\calG, c)} \{\calG_i\}_{i=1}^n$ are isomorphic. 
\end{proposition}

\begin{proof}
We begin by arguing that the color sets are well defined,  and that all of the edges defined are in fact \emph{undirected} (i.e. for any directed edge that appears in the construction, its reverse edge was also added and both have the same color). To do this first we point out that the relation defined in \ref{item:colorsets}) induces a pairing of the elements of $S_i\times H \times R_i$. Indeed, by Theorem \ref{thm:normalform}, for any $(s, h, r)$ there are unique $h'\in H$ and $r'\in R_i$ such that $shr =h'r'$, and hence each triple is related to exactly one other triple. Moreover, if $(s, h, r)\sim (s', h', r')$ for $(s', h', r')\neq (s, h, r)$, then by definition we have that $shr =h'r'$ and $s'=s^{-1}$, so we also have $hr = s'h'r'$ and hence $(s', h', r')\sim (s, h, r)$, that is, $\sim$ is a symmetric relation. Because $\sim$ is symmetric and each triple is only related to one triple (other than itself), we can conclude that $\sim$ is an equivalence relation, and hence $\calC_i$ is well defined. Moreover, because $\sim$ induces a pairing in $S_i\times H\times R_i$, we are also guaranteed that for any directed edge of the form $(h, h')\in H^2$ (in $\calG$) or of the form $(r, r')\in R_i^2$ (in $\calG_i$), its inverse was also added, and both received the same color, so we can think of both of them as an undirected edge. 

Now we prove the isomorphism claim. First note that by the definition of $\ast_{(\calG, c)} V(\calG_i)$ and Theorem \ref{thm:normalform} there is a natural bijection $\phi: \ast_{(\calG, c)} V(\calG_i) \to \ast_{H} \{G_i\}_{i=1}^n$, given by
$$\phi(h, r_m \cdots r_1) = h r_m \cdots r_1. $$
It then just remains to show that $\phi$ preserves edge incidences. For this, suppose that $(h, r r_m \cdots r_1)$ and $(h', r'r_m \cdots r_1)$ are incident in the free amalgamated product of graphs (where $r$ and $r'$ are allowed to be the empty word and $m$ is allowed to be 0). Then, by definition there is some $i$ such that $ r, r' \in V(\calG_i)=R_i$, and there is an edge between $r$ and $r'$ in $\calG_i$ and between $h$ and $h'$ in $\calG$. Moreover, the aforementioned edges have the same color, which in this case means that there exists some $s\in S_i$ such that $shr = h'r'$, and therefore 
$$s\cdot \phi(h, rr_m \cdots r_1) =shrr_m\cdots r_1=h'r'r_m \cdots r_1= \phi(h', r' r_m \cdots r_1). $$
That is, any pair of adjacent vertices in $\ast_{(\calG, c)} \{\calG_i\}_{i=1}^n$ gets sent to a pair of adjacent vertices in $\Gamma(\ast_H\{G_i\}_{i=1}^n, S)$. The same argument can then be used to show that any pair of adjacent vertices in  $\Gamma(\ast_H\{G_i\}_{i=1}^n, S)$ comes from a pair of adjacent vertices in $\ast_{(\calG, c)} \{\calG_i\}_{i=1}^n$, so the proof is concluded.  
\end{proof}

\subsection{Implications for $A_{\mathcal{T}(\mathcal{G})}$}
\label{sec:rademacherrepresentation}

Here we revisit Example \ref{exunivcover} in the context of Theorem \ref{thmrefmain}. As in Example \ref{exunivcover},  we assume without loss of generality that every loop in $\calG$ is a half-loop and furthermore that $V(\calG)= [n]$. Let $a$ and $b$ be the edge weights and vertex potential of $\calG$, and let $\calT$ be its universal cover. 

Label the half-loops of $\calG$ by $\diredge_1, \dots, \diredge_l$ and its non-loop undirected edges by $\{\diredge_{l+1}, \che_{l+1}\}, \dots, \{\diredge_m, \che_m\}$.  Then define the discrete group $\Gamma_m \myeq \mathbb{Z}_2 \ast \cdots \ast \mathbb{Z}_2$ and denote its canonical generators by $g_1, \dots, g_m$. As usual $\lambda$  will denote the left regular representation of $\Gamma_m$ on $\ell^2(\Gamma_m)$. We will now see that the following result follows easily from Theorem \ref{thmrefmain}. 

\begin{proposition}
\label{prop:rademacherepresentation}
Define $X_\calG\in M_n(\bC)\otimes \cred(\Gamma_m)$ by \marginpar{$X_\mathcal{G}$}
$$X_\calG := \sum_{i=1}^n b_i \Delta_{ii} \otimes 1_{\cred(\Gamma_m)}+ \sum_{i=1}^l a_{\diredge_i} \Delta_{\sigma(\diredge_i)\tau(\diredge_i)}\otimes \lambda(g_i)+ \sum_{i=l+1}^m (a_{\diredge_i} \Delta_{\sigma(\diredge_i)\tau(\diredge_i)}+ a_{\che_i}\Delta_{\sigma(\che_i)\tau(\che_i)})\otimes \lambda(g_i) ,$$
 and let $E: M_n(\bC)\otimes \cred(\Gamma_m) \to M_n(\bC)$ be the canonical conditional expectation (see (\ref{eqconditionalexp})). Then:
\begin{enumerate}[i)]
    \item \label{item:diagonality} $E[X_{\calG}^p]$ is a diagonal matrix for all $p\in \mathbb{Z}_{\geq 0}$.  
    \item \label{item:spectralmeasures} For any $r\in [n]$, let $\func_r: M_n(\bC)\to \bC$ be defined by $\func_r(B):=B_{rr}$ for all $B\in M_n(\bC)$. Then, the spectral distribution of $X_\calG$ with respect to $\func_r\circ E$ is equal to $\mu_r$, the spectral measure of $A_{\calT}$ associated to $r$. 
    \item \label{item:DOS} The spectral distribution of $X_{\calG}$ with respect to $\frac{1}{n} \Tr \circ E$ is equal to the density of states of $A_{\mathcal{T}}$. 
\end{enumerate}
\end{proposition}

\begin{proof}
Consider the construction in Example \ref{exunivcover} and recall that for such a construction $\calT_{\text{full}}= \ast_{(\calG, c)} \{\calG_i\}_{i=1}^m$ has $n$ roots $(1, e), \dots, (n, e)$ and that the connected components $\calT_1, \dots, \calT_n$ of each are disjoint copies of $\calT$.  Lift the edge weights and vertex potential of $\calG$ to $\calTf$ as in Definition \ref{def:liftweights}. Then  apply Theorem \ref{thmrefmain} to obtain operators $D, T_1, \dots, T_m$ in an operator-valued probability space $(\calA, E, M_n(\bC))$, so that: $T_i$ distributes with respect to $E$ as  
$$a_{\diredge_i}\Delta_{\sigma(\diredge_i)\sigma(\diredge_i)} \otimes \left( \begin{array}{cc} 0 & 1 \\ 1 & 0 \end{array} \right)\in (M_n(\bC)\otimes M_2(\bC), E_i, M_n(\bC))$$
for $i, \dots l$ (i.e. when $f_i$ is a half-loop), and it distributes as
$$a_{f_i} (\Delta_{\sigma(\diredge_i)\tau(\diredge_i)}+\Delta_{\tau(\diredge_i)\sigma(\diredge_i)} ) \otimes \left( \begin{array}{cc} 0 & 1 \\ 1 & 0 \end{array} \right)\in (M_n(\bC)\otimes M_2(\bC), E_i, M_n(\bC))$$
for $i=l+1, \dots, m$ (i.e. when $\{\diredge_i, \che_i\}$ is an undirected non-loop edge), and $D$ distributes with respect to $E$ as 
$\text{diag}(b_1, \dots, b_n)\in (M_n(\bC), \mathrm{Id}, M_n(\bC))$. Moreover we know that $D, T_1, \dots, T_m$ are free with amalgamation with respect to $E$. 

Now decompose  $X_\calG$ by defining $X_\calG^D := \sum_{i=1}^n b_i\Delta_{ii}\otimes 1_{\cred(\Gamma_m)}$ and $X_\calG^{(i)} := a_{\diredge_i} \Delta_{\sigma(\diredge_i)\sigma(\diredge_i)}\otimes \lambda(g_i)$ for $i=1, \dots, l$, and $X_\calG^{(i)} := a_{\diredge_i}(\Delta_{\sigma(\diredge_i)\tau(\diredge_i)}+ \Delta_{\tau(\diredge_i)\sigma(\diredge_i)} ) \otimes \lambda(g_i)$ when $i=l+1, \dots, m$. Now, from Observation \ref{obs:matrixfreeness} we know that the family  $\{X_\calG^D, X_\calG^{(1)}, \dots, X_\calG^{(m)} \}$ is free with amalgamation over $M_n(\bC)$. Moreover, it is clear that $D$ is equal in distribution (over $M_n(\bC)$) to $X_{\calG}^D$, and similarly each $T_i$ is equal in distribution to $X_{\calG}^{(i)}$. 

Hence, $X_\calG$ is equal in distribution over $M_n(\bC)$ to $T=D+T_1+\cdots T_m$. Then \ref{item:spectralmeasures}) follows directly from Theorem \ref{thmrefmain} \ref{mainthm:pathcounting}). On the other hand, because $\frac{1}{n}\Tr = \frac{1}{n} \sum_{r\in [n]} \func_r$, we get \ref{item:DOS}) from \ref{item:spectralmeasures}).  Finally, to show \ref{item:diagonality}) take $i\neq j$ and use that $E[T^p](i, j) = E[X_\calG^p](i, j)$ combined with Theorem \ref{thmrefmain} \ref{mainthm:pathcounting}), to conclude that $E[X_\calG^p](i, j)$ is equal to the sum of the weighted paths of length $p$ in $\calTf$ from $(i, e)$ to $(j, e)$, but since these vertices are in distinct connected components we can conclude that $E[X_\calG^p](i, j)=0$ as we wanted to show. 
\end{proof}

\begin{remark}[Distinct $C^*$-algebra representations]
Note that we have so far presented two essentially different methods to view $A_\calG$ as an element of a $C^*$-algebra. First, in Section \ref{secnumberofbands} we used asymptotic freeness of random matrices to argue that $A_\calT$ could be viewed as an element in $M_n(\bC)\otimes \cred(\mathbb{F}_m)$. Here, we have used the machinery from amalgamated graph products to show that $A_{\calT}$ can be viewed as an element in $M_n(\bC)\otimes \cred(\Gamma_m)$. Although each representation and each  proof are insightful in its own way, note that having access to distinct $C^*$-algebras is also relevant in applications. The representation in $M_n(\bC)\otimes \cred(\mathbb{F}_m)$ was exploited in Section \ref{sec:sunadastheorem} to prove Sunada's theorem, where the $K$-theory of $\cred(\mathbb{F}_m)$ (which is different from that of $\cred(\Gamma_m)$) played a crucial role. On the other hand in Section \ref{secalgebraicuniversal} we will use the representation in $M_n(\bC)\otimes \cred(\Gamma_m)$, which will allow us to apply verbatim some results of Lehner \cite{lehner1999computing} about the norm and Cauchy transform of elements in this $C^*$-algebra. 
\end{remark}

\section{Universal Covers: Algebraic Description of the Spectrum}
\label{secalgebraicuniversal}

In this section we give two applications of Proposition \ref{prop:rademacherepresentation}, where it was shown that $A_\calT$ can be represented as an element in $M_n(\bC)\otimes \cred(\Gamma_m)$. An important ingredient for our proofs will be the work of Lehner \cite{lehner1999computing}, which has formulas for the operator-valued $R$-transform and norm of certain elements in   $M_n(\bC)\otimes \cred(\Gamma_m)$. In what follows we maintain the same setup and notation defined at the beginning of Section \ref{sec:rademacherrepresentation}.

\subsection{Aomoto's Equations via the $R$-transform}

Here we will recover Aomoto's system of equations (i.e. Theorem $\ref{thm:system}$) using free probability. We begin by noting that if we apply \cite[Proposition 3.1]{lehner1999computing}, and specialize from positive definite matrices to diagonal positive definite matrices, we get the following result: 
\begin{proposition}[Lehner] \label{prop:lehner}
Let $W\in M_n(\mathbb{C})$ be a diagonal  matrix with positive entries and let $w_i \myeq W_{ii}$. Then the $M_n(\mathbb{C})$-valued $R$-transform of $X_{\mathcal{G}}$ at $W$, i.e. $R_{X_{\mathcal{G}}}(W)$, is also diagonal, and the diagonal entries are given by 
$$R_{X_{\mathcal{G}}} (W) (i, i) = b_i + \frac{1}{2w_i} \bigg( \sum_{\diredge\in \sigma(i) } \big(1+4a_\diredge^2 w_{\sigma(\diredge)}w_{\tau(\diredge)}\big)^{1/2} -1 \bigg), \quad \forall i \in [n].   $$
\end{proposition}

In order to use the above proposition we will need the following observation, which is a corollary of Proposition \ref{prop:rademacherepresentation}. 

\begin{observation}
\label{obs:diagonalcauchy}
Let $G_{X_\mathcal{G}}$ denote the $M_n(\mathbb{C})$-valued Cauchy transform of $X_\mathcal{G}$. Then for  $z \in \mathbb{C}$ in a neighborhood of infinity, $G_{X_\mathcal{G}}(zI_n)$ is diagonal. 
\end{observation}

\begin{proof}
We will use the following power series expansion (in a neighborhood of infinity) for the Cauchy transform 
$$G_{X_\mathcal{G}}(B) = \sum_{p\geq 0} E[B^{-1}(X_{\mathcal{G}}B^{-1})^p].$$
For $B= zI_n$, the terms in the right-hand side of the above equation will be of the form $z^{-(p+1)} E\big[X_\mathcal{G}^p\big]$. Proposition \ref{prop:rademacherepresentation} \ref{item:diagonality}) then implies that each term of the series expansion of $G_{X_\calG}$ is a diagonal matrix, so the claim follows.  
\end{proof}

We can now show Theorem \ref{thm:system}.

\begin{proof}[Proof of Theorem \ref{thm:system}]
As in the statement, for every $i\in [n]$ let $w_i(z)$ be the Cauchy transform of $\mu_i$, the spectral measure of $A_\mathcal{T}$ corresponding to $i$. Let $W(z) \myeq G_{X_\G}(zI_n)$, and note that from Observation \ref{obs:diagonalcauchy} we know that $W(z)$ is diagonal. Moreover, from Observation \ref{obsCauchy} we have that 
$$w_i(z) = W(z)(i, i).$$
Moreover, because they are Cauchy transforms, for any $i$ we have that $w_i(z) > 0$ for all sufficiently large real $z$. Using the definition of the $R$-transform in (\ref{eqn:defrtrans}) we obtain 
\[ z w_i(z) = 1 + R_{X_{\mathcal{G}}} (W) (i, i) w_i(z)  \]
for all $1 \le i \le n$.  After substituting in the expression for $R_{X_{\mathcal{G}}}$ from Proposition \ref{prop:lehner} and simplifying, we get that the system of equations in the theorem statement holds for sufficiently large positive real $z$.  Since $w_u(z)$ is holomorphic and $w_u(z) \to 0$ as $|z| \to \infty$, both sides of the equations are holomorphic in a neighborhood of infinity, so by analytic continuation the system of equations holds in a neighborhood of infinity.  Since $w_i(z) w_j(z) > 0$ for all $i,j$ when $z$ is real and outside the convex hull of the spectrum $\Spec(A_\T)$, the system of equations holds for these $z$ as well, as the singularity of the square root is always avoided.
\end{proof}

\begin{remark}[Half-loops are allowed]
Although (for exposition purposes) we stated Theorem \ref{thm:system} for graphs without half-loops note that the above proof does allow half-loops, and the statement is left unchanged.  
\end{remark}

\begin{remark}[Understanding the density of states]
 In \cite{aomoto1991point} Aomoto used this system of equations to prove results about the point spectrum of $A_\calT$ (equivalently the atoms in the density of states). In Appendix \ref{app:densityattheedge} we show that this system of equations can also be used to understand the behavior of the density of states  at the edge of $\Spec(A_\calT)$. Specifically, we prove Theorem \ref{thm:vanishingdensity} stated in the introduction. 
\end{remark}

\subsection{Formula for the Spectral Radius}
\label{sec:spectralradius}

Let $\spr(A_\mathcal{T})$ denote the spectral radius of $A_\mathcal{T}$ and $\rho_r$ denote the right edge of $\Spec(A_\calT)$. This subsection will be devoted to  proving Theorem \ref{thm:radius}. We will build on  \cite[Theorem 1.1]{lehner1999computing} and, as above, we will use $g_1, \dots, g_m$  to denote the canonical generators in $\Gamma_m$.

\begin{theorem}[Lehner]
\label{thm:lehnernorm}
Assume that $m \geq 2$, and let $A_0, \dots, A_m$ be $n\times n$ Hermitian matrices, with $A_0$ positive semidefinite. Then 
\begin{equation}
\label{eq:Lehnerradius}
\bigg\| A_0\otimes 1_{C_{\mathrm{red}}^*(\Gamma_m)}+ \sum_{i=1}^m A_i \otimes \lambda(g_i) \bigg\| = \inf_{Z>0} \bigg\| {2Z}+A_0+ \sum_{i=1}^m Z^{\frac{1}{2}}\left((I_n+(Z^{-\frac{1}{2}}A_i Z^{-\frac{1}{2}})^2)^{\frac{1}{2}}-I_n \right) Z^{\frac{1}{2}}\bigg\|
\end{equation}
where the infimum is taken over all positive definite invertible $n\times n$ matrices $Z$. Moreover, the infimum can be restricted to those $Z$ for which the expression inside the norm sign equals a positive scalar multiple of the identity matrix $I_n$. 
\end{theorem}

We are now ready to prove Theorem \ref{thm:radius}. In short, the proof 
uses Aomoto's equations (Theorem \ref{thm:system}) to reduce the expression (\ref{eq:Lehnerradius}).



\begin{proof} For $i = 1, \dots, n$ define $g_i(y_1, \dots, y_n)$ to be the expression inside the $\max$ symbol in the theorem statement, and let $w_i(z)$ denote the Cauchy transform of $\mu_i$ (the spectral measure of $A_\mathcal{T}$ associated to $i$). Fix $t> \rho_r(T)$, and observe that $\infty >w_i(t)> 0$  for every $i\in [n]$. On the other hand, from Theorem \ref{thm:system} we have $t= g_i(w_1(t), \dots, w_n(t))$ for every $i$. Together, this implies that
$$t \geq \inf_{y_1, \dots, y_n > 0} \max_{i\in[n]} g_i(y_1, \dots, y_n) . $$
Since the above inequality holds for any $t > \rho_r(A_\mathcal{T})$, it holds for $t = \rho_r(A_\T)$. It remains to show the opposite inequality.

From Proposition \ref{prop:rademacherepresentation} we have that $\spr(A_\mathcal{T}) = ||X_{\mathcal{G}}||$. We would like to apply Theorem \ref{thm:lehnernorm} on $X_\G$, but the theorem requires $A_0$ to be positive semidefinite.  To remedy this, take $\lambda \ge 0$ large enough so that $\lambda + b_i \ge 0$ for all $i$; we may now apply Theorem \ref{thm:lehnernorm} on $X_\mathcal{G}+\lambda$ to obtain \begin{equation}
\label{eq:lehnerforthiscase}
\spr(A_\mathcal{T} + \lambda) = \|X_\mathcal{G} + \lambda I_n\| = \inf_{Z>0} \bigg\| 2Z + D + \lambda I_n + \sum_{i=1}^m Z^{\frac{1}{2}}
\Big(\big(I_n + a_{\diredge_i}^2 (Z^{-\frac{1}{2}} A_i  Z^{-\frac{1}{2}})^2\big)^{\frac{1}{2}}-I_n\Big)Z^{\frac{1}{2}}  \bigg\| 
\end{equation}
where $D = \operatorname{diag}(b_1, \dots, b_n)$, $A_i= \Delta_{\sigma(\diredge_i)\sigma(\diredge_i)}$ for $i=1, \dots, l$, and $A_i = \Delta_{\sigma(\diredge_i)\tau(\diredge_i)}+ \Delta_{\tau(\diredge_i)\sigma(\diredge_i)} $ for $i=l+1, \dots, m$.  We will now see that in this case the  infimum is achieved by diagonal matrices.  Let $y_1, \dots, y_n >0$ and take $Y \myeq \mathrm{diag}(1/2y_1, \dots, 1/2y_n)$. Simple computations yield that upon setting $Z = Y$ in (\ref{eq:lehnerforthiscase}), the quantity inside the norm on the right hand side becomes
\begin{equation}
\label{eq:diagforg's}
\lambda I_n + \mathrm{diag}(g_1(y_1, \dots, y_n), \dots, g_n(y_1, \dots, y_n)).
\end{equation}
Since  $\lambda + g_i(y_1, \dots, y_n)\geq 0$ for all $i$, we have  \[|| \lambda I_n + \mathrm{diag}[g_1(y_1, \dots, y_n), \dots, g_n(y_1, \dots, y_n)] || = \lambda + \max_{i\in [n]} g_i(y_1, \dots, y_m).\] Then, (\ref{eq:lehnerforthiscase}) and (\ref{eq:diagforg's}) yield
\[ \spr(A_\T + \lambda) \le \lambda + \inf_{y_1, \dots, y_n > 0} \max_{i\in[n]} g_i(y_1, \dots, y_n). \]
Since $\spr_r(A_\T) + \lambda = \rho_r(A_\T + \lambda) \le \spr(A_\T + \lambda)$, we have \[ \rho_r(A_\T) \le \inf_{y_1, \dots, y_n > 0} \max_{i\in[n]} g_i(y_1, \dots, y_n) \] as desired.



\end{proof}

Applying Lagrange multipliers to the optimization problem (\ref{eq:rho}), using the constraint that all $n$ expressions inside the $\max$ symbol are equal, we have the following corollary:

\begin{corollary} \label{cor:lagrange}
With the above setup and notation,  $t=\rho_r$ is the only real number such that, under the constraint $y_1, \dots, y_n >0$,  the following system of $2n+1$ equations in the variables $t, y_1, \dots, y_n, \lambda_1, \dots, \lambda_n \in \mathbb{R}$ has a solution:
$$\begin{cases}
    1 = \displaystyle\sum_{i=1}^n \lambda_i, \\
    \lambda_i (t - b_i) = \displaystyle\sum_{\diredge\in \sigma(i)} \displaystyle a_\diredge^2\, \frac{y_{\tau(\diredge)} \lambda_{\sigma(\diredge)} + y_{\sigma(\diredge)} \lambda_{\tau(\diredge)}}{\big(1+4 a_\diredge^2y_{\sigma(\diredge)} y_{\tau(\diredge)}\big)^{1/2}} & \forall i\in [n], \\
    t = b_i + \displaystyle\frac{1}{2y_i} \bigg( 2-\operatorname{deg}(i) +\displaystyle\sum_{\diredge\in \sigma(i)} \big(1+4a_\diredge^2y_{\sigma(\diredge)}y_{\tau(\diredge)}\big)^{1/2} \bigg)  & \forall i \in [n], 
\end{cases} $$
where half-loops count once towards the count in $\mathrm{deg}(i)$.
\end{corollary}


\section{Future Research}
\label{sec:future}

Since the first version of the present paper has been made public, some of the questions discussed in this section have been answered. Below we discuss  relevant recent work and highlight the questions that we still believe to be open and interesting. 

\paragraph{Amalgamated Free Product for Graphs.} In the first version of this paper we pointed out similarities and differences between our graph product and the \emph{additive graph product} introduced by Mohanty and O'Donnell in \cite{mohanty2019x}, and asked for a complete characterization of the graphs that could be constructed using each of these products. This has been answered in \cite{o2020explicit} by O'Donnell and Wu, where a new graph product was introduced (with the purpose of exploiting results in \cite{bordenave2019eigenvalues} to generate relative expanders), and was shown to generalize the amalgamated free product of graphs and the additive product. Moreover, the authors gave a full characterization of the graphs that could be obtained using their product. 

On a different direction, we would like to recall questions about algebraicity and absence of singular continuous spectrum for  general classes of graphs (such as the ones that can be obtained using the amalgamated graph product). For certain operators on different classes of infinite graphs, the respective Green functions (or other related functions)  have been shown to be algebraic \cite{woess1987context,  lalley2001random, nagnibeda2002random, keller2013spectral,  avni2020periodic}. On the other hand, algebraicity is relevant in the context of spectral theory since it provides means to show that the operators in question have no singular continuous spectrum \cite{  avni2020periodic} or in some cases that the spectrum is purely absolutely continuous \cite{keller2013spectral}. We believe that Jacobi operators on any graph defined via our product also have no singular continuous spectrum, and it is possible that the work of Anderson \cite{anderson2014preservation} might lead to showing algebraicity of their Green functions. Proving this would provide a generalization of Theorem 6.7 in \cite{avni2020periodic} and other results in the literature of spectral analysis of Cayley graphs. We state this as a conjecture:

\begin{conjecture}[Algebraicity and absence of singular continuous spectrum]
The Cauchy transforms of the spectral measures of any Jacobi operator on a graph constructed via the amalgamated free product of graphs are algebraic, and the Jacobi operator has no singular continuous spectrum.
\end{conjecture}

The above could also be relevant in relation to numerical computations. In particular, we point out that an operator-valued analog of \cite{rao2008polynomial} can lead to accurate numerical method for computing the spectral measures of any graph obtained via the amalgamated free product of graphs.

\paragraph{Universal Covering Graphs.} In \cite{avni2020periodic} and in previous versions of this paper some questions about the point spectrum of $A_\calT$ were asked. These questions were answered in \cite{banks2020point}, where a full characterization of the point spectrum of $A_{\calT}$ was provided by analyzing the combinatorial structure of its eigenvectors. Later in \cite{arizmendi2021universality}, using free probability tools, a general theory about atoms of spectral measures of polynomials in non-commutative random variables was developed, and some of the results in \cite{banks2020point} can be obtained directly from this general theory.  

On a different direction, we bring to the  attention of the reader that many of the questions about $m$-functions of $A_{\calT}$ raised in \cite{avni2020periodic} remain open (in particular see \cite[Section 10.1]{avni2020periodic}). On the other hand, in the same way in which we deduced Aomoto's systems of equations using the $R$-transform, we believe that results about the $m$-functions of $A_{\calT}$ can be obtained via the theory of subordination in free probability \cite{voiculescu1993analogues, biane1998processes, belinschi2007new, belinschi2017analytic}. Moreover, the subordination approach would yield numerical methods for  computing the density of states of $A_{\calT}$ via fixed point equations (see \cite{belinschi2017analytic, helton2018applications}). 

Finally, we highlight that, despite recent activity in the area, simple questions about the number of bands in the spectrum of $A_\calT$ seem to be out of reach of current tools. For example, we believe that the following toy problems are quite challenging. 

\begin{problem}
Find a characterization of  the base graphs $\G$ and coefficients $a_\diredge, b_v$ for which $A_{\T}$ has connected spectrum. 
\end{problem}

\begin{problem}
Find an infinite sequence of Jacobi matrices $A_{\G_n}$ on graphs $\G_n$  (without half-loops) with $|V(\G_n)| \to \infty$ as $n \to\infty$, and with $b_v^{(n)} = 0$ for every $v \in V(\G_n)$, such that for every $n$, $\G_n$ has at least two cycles and the spectrum of $A_{\T(\G_n)}$ has $|V(\G_n)|$ bands.\footnote{The simpler problem in which the condition $b_v^{(n)} = 0$   is removed has a simple solution \cite{avni2020periodic}; see Remark \ref{rem:tight} above. } 
\end{problem}

\section*{Acknowledgements}

We  thank Nikhil Srivastava and Dan-Virgil Voiculescu for many helpful discussions and suggestions of research directions. We also  thank  Peter Sarnak for a very helpful and encouraging conversation we had at the Simons Institute. We thank Barry Simon for his helpful feedback on a previous version of this paper, for pointing out key references and for helpful discussions.   We are also grateful to Nalini Anantharaman, Octavio Arizmendi, Benson Au, Hari Bercovici, Shai Evra and Sidhanth Mohanty for helpful discussions. 

 \bibliographystyle{alpha}
\bibliography{references}

\newcommand{\etalchar}[1]{$^{#1}$}
\begin{thebibliography}{BGVM20}

\bibitem[A{\etalchar{+}}88]{aomoto1988algebraic}
Kazuhiko Aomoto et~al.
\newblock Algebraic equations for green kernel on a tree.
\newblock {\em Proceedings of the Japan Academy, Series A, Mathematical
  Sciences}, 64(4):123--125, 1988.

\bibitem[ABGO04]{accardi2004monotone}
Luigi Accardi, Anis Ben~Ghorbal, and Nobuaki Obata.
\newblock Monotone independence, comb graphs and {B}ose--{E}instein
  condensation.
\newblock {\em Infinite Dimensional Analysis, Quantum Probability and Related
  Topics}, 7(03):419--435, 2004.

\bibitem[ABS20]{avni2020periodic}
Nir Avni, Jonathan Breuer, and Barry Simon.
\newblock Periodic {J}acobi matrices on trees.
\newblock {\em Advances in Mathematics}, 370:107241, 2020.

\bibitem[ACSY21]{arizmendi2021universality}
Octavio Arizmendi, Guillaume C{\'e}bron, Roland Speicher, and Sheng Yin.
\newblock Universality of free random variables: atoms for non-commutative
  rational functions.
\newblock {\em arXiv preprint arXiv:2107.11507}, 2021.

\bibitem[AFH15]{angel2015non}
Omer Angel, Joel Friedman, and Shlomo Hoory.
\newblock The non-backtracking spectrum of the universal cover of a graph.
\newblock {\em Transactions of the American Mathematical Society},
  367(6):4287--4318, 2015.

\bibitem[AK88]{aomoto1988green}
K.~Aomoto and Y.~Kato.
\newblock Green functions and spectra on free products of cyclic groups.
\newblock {\em Annales de l'institut Fourier}, 38(1):59--85, 1988.

\bibitem[ALS07]{accardi2007decompositions}
Luigi Accardi, Romuald Lenczewski, and Rafa{\l} Sa{\l}apata.
\newblock Decompositions of the free product of graphs.
\newblock {\em Infinite Dimensional Analysis, Quantum Probability and Related
  Topics}, 10(03):303--334, 2007.

\bibitem[And14]{anderson2014preservation}
Greg~W. Anderson.
\newblock Preservation of algebraicity in free probability.
\newblock {\em arXiv preprint arXiv:1406.6664}, 2014.

\bibitem[Aom91]{aomoto1991point}
K.~Aomoto.
\newblock Point spectrum on a quasihomogeneous tree.
\newblock {\em Pacific Journal of Mathematics}, 147(2):231--242, 1991.

\bibitem[AZ08]{anderson2008law}
Greg Anderson and Ofer Zeitouni.
\newblock A law of large numbers for finite-range dependent random matrices.
\newblock {\em Communications on Pure and Applied Mathematics: A Journal Issued
  by the Courant Institute of Mathematical Sciences}, 61(8):1118--1154, 2008.

\bibitem[BB07]{belinschi2007new}
Serban~T Belinschi and Hari Bercovici.
\newblock A new approach to subordination results in free probability.
\newblock {\em Journal d'Analyse Math{\'e}matique}, 101(1):357--365, 2007.

\bibitem[BBL19]{belinschi2019atoms}
Serban Belinschi, Hari Bercovici, and Weihua Liu.
\newblock The atoms of the free additive convolution of two operator-valued
  distributions.
\newblock {\em arXiv preprint arXiv:1903.09002}, 2019.

\bibitem[BC19]{bordenave2019eigenvalues}
Charles Bordenave and Beno{\^\i}t Collins.
\newblock Eigenvalues of random lifts and polynomials of random permutation
  matrices.
\newblock {\em Annals of Mathematics}, 190(3):811--875, 2019.

\bibitem[BGVM20]{banks2020point}
Jess Banks, Jorge Garza-Vargas, and Satyaki Mukherjee.
\newblock Point spectrum of periodic operators on universal covering trees.
\newblock {\em arXiv preprint arXiv:2008.03318}, 2020.

\bibitem[Bia98]{biane1998processes}
Philippe Biane.
\newblock Processes with free increments.
\newblock {\em Mathematische Zeitschrift}, 227(1):143--174, 1998.

\bibitem[BMS17]{belinschi2017analytic}
Serban~T Belinschi, Tobias Mai, and Roland Speicher.
\newblock Analytic subordination theory of operator-valued free additive
  convolution and the solution of a general random matrix problem.
\newblock {\em Journal f{\"u}r die reine und angewandte Mathematik (Crelles
  Journal)}, 2017(732):21--53, 2017.

\bibitem[BS11]{benjamini2011recurrence}
Itai Benjamini and Oded Schramm.
\newblock Recurrence of distributional limits of finite planar graphs.
\newblock In {\em Selected Works of Oded Schramm}, pages 533--545. Springer,
  2011.

\bibitem[CS86]{cartwright1986random}
Donald~I. Cartwright and Paolo~Maurizio Soardi.
\newblock Random walks on free products, quotients and amalgams.
\newblock {\em Nagoya Mathematical Journal}, 102:163--180, 1986.

\bibitem[Dav96]{davidson1996c}
Kenneth~R. Davidson.
\newblock {\em $C^*$-algebras by example}, volume~6.
\newblock American Mathematical Soc., 1996.

\bibitem[FK19]{friedman2019relativized}
Joel Friedman and David Kohler.
\newblock On the relativized {A}lon second eigenvalue conjecture {I}: Main
  theorems, examples, and outline of proof.
\newblock {\em arXiv preprint arXiv:1911.05688}, 2019.

\bibitem[Fri93]{friedman1993}
Joel Friedman.
\newblock Some geometric aspects of graphs and their eigenfunctions.
\newblock {\em Duke Math. J.}, 69(3):487--525, 03 1993.

\bibitem[FTS94]{figa1994harmonic}
Alessandro Fig{\`a}-Talamanca and Tim Steger.
\newblock {\em Harmonic analysis for anisotropic random walks on homogeneous
  trees}, volume 531.
\newblock American Mathematical Soc., 1994.

\bibitem[GL13]{gouezel2013random}
S{\'e}bastien Gou{\"e}zel and Steven~P. Lalley.
\newblock Random walks on co-compact fuchsian groups.
\newblock In {\em Annales scientifiques de l'{\'E}cole Normale Sup{\'e}rieure},
  volume~46, pages 131--175, 2013.

\bibitem[GM88]{godsil1988walk}
Chris~D. Godsil and Bojan Mohar.
\newblock Walk generating functions and spectral measures of infinite graphs.
\newblock {\em Linear Algebra and its Applications}, 107:191--206, 1988.

\bibitem[Gre95]{greenberg1995spectrum}
Yoseph Greenberg.
\newblock {\em On the spectrum of graphs and their universal covering}.
\newblock PhD thesis, Hebrew University, 1995.

\bibitem[HL00]{haagerup1999brown}
Uffe Haagerup and Flemming Larsen.
\newblock Brown's spectral distribution measure for {$R$}-diagonal elements in
  finite von {N}eumann algebras.
\newblock {\em Journal of Functional Analysis}, 176(2):331 -- 367, 2000.

\bibitem[HMS18]{helton2018applications}
J~William Helton, Tobias Mai, and Roland Speicher.
\newblock Applications of realizations (aka linearizations) to free
  probability.
\newblock {\em Journal of Functional Analysis}, 274(1):1--79, 2018.

\bibitem[HO07]{hora2007quantum}
Akihito Hora and Nobuaki Obata.
\newblock {\em Quantum probability and spectral analysis of graphs}.
\newblock Springer Science \& Business Media, 2007.

\bibitem[Hoo05]{hoory2005lower}
Shlomo Hoory.
\newblock A lower bound on the spectral radius of the universal cover of a
  graph.
\newblock {\em Journal of Combinatorial Theory, Series B}, 93(1):33--43, 2005.

\bibitem[HPS18]{hall2018ramanujan}
Chris Hall, Doron Puder, and William~F. Sawin.
\newblock Ramanujan coverings of graphs.
\newblock {\em Advances in Mathematics}, 323:367--410, 2018.

\bibitem[HR19]{huang2019local}
Brice Huang and Mustazee Rahman.
\newblock On the local geometry of graphs in terms of their spectra.
\newblock {\em European Journal of Combinatorics}, 81:378--393, 2019.

\bibitem[Jek18]{jekel2018operator}
David Jekel.
\newblock Operator-valued non-commutative probability.
\newblock {\em Preprint, available at https://www. math. ucla.
  edu/~davidjekel/projects. html}, 2018.

\bibitem[Kes59]{kesten1959symmetric}
Harry Kesten.
\newblock Symmetric random walks on groups.
\newblock {\em Transactions of the American Mathematical Society},
  92(2):336--354, 1959.

\bibitem[KFSH19]{kollar2019line}
Alicia~J Koll{\'a}r, Mattias Fitzpatrick, Peter Sarnak, and Andrew~A Houck.
\newblock Line-graph lattices: Euclidean and non-euclidean flat bands, and
  implementations in circuit quantum electrodynamics.
\newblock {\em Communications in Mathematical Physics}, pages 1--48, 2019.

\bibitem[KLW13]{keller2013spectral}
Matthias Keller, Daniel Lenz, and Simone Warzel.
\newblock On the spectral theory of trees with finite cone type.
\newblock {\em Israel Journal of Mathematics}, 194(1):107--135, 2013.

\bibitem[KLW15]{keller2014invitation}
Matthias Keller, Daniel Lenz, and Simone Warzel.
\newblock An invitation to trees of finite cone type: random and deterministic
  operators.
\newblock {\em Markov Processes and Related Fields}, 21:557--574, 2015.

\bibitem[Lal01]{lalley2001random}
Steven~P. Lalley.
\newblock Random walks on regular languages and algebraic systems of generating
  functions.
\newblock {\em CONTEMPORARY MATHEMATICS}, 287:201--230, 2001.

\bibitem[Leh99]{lehner1999computing}
Franz Lehner.
\newblock Computing norms of free operators with matrix coefficients.
\newblock {\em American Journal of Mathematics}, 121(3):453--486, 1999.

\bibitem[LPS88]{lubotzky1988}
Alexander Lubotzky, Ralph~S Phillips, and Peter Sarnak.
\newblock Ramanujan graphs.
\newblock {\em Combinatorica}, 8, 1988.

\bibitem[McK81]{mckay1981expected}
Brendan~D. McKay.
\newblock The expected eigenvalue distribution of a large regular graph.
\newblock {\em Linear Algebra and its Applications}, 40:203--216, 1981.

\bibitem[McL88]{mclaughlin1988random}
John~C. McLaughlin.
\newblock {\em Random walks and convolution operators on free products}.
\newblock PhD thesis, New York University, 1988.

\bibitem[MO20]{mohanty2019x}
Sidhanth Mohanty and Ryan O'Donnell.
\newblock X-ramanujan graphs.
\newblock In {\em Proceedings of the Fourteenth Annual ACM-SIAM Symposium on
  Discrete Algorithms}, pages 1226--1243. SIAM, 2020.

\bibitem[Moh06]{mohar2006tree}
Bojan Mohar.
\newblock Tree amalgamation of graphs and tessellations of the {C}antor sphere.
\newblock {\em Journal of Combinatorial Theory, Series B}, 96(5):740--753,
  2006.

\bibitem[MS17]{mingo2017free}
James~A. Mingo and Roland Speicher.
\newblock {\em Free probability and Random matrices}, volume~35.
\newblock Springer, 2017.

\bibitem[MSS13]{marcus2013interlacing}
Adam Marcus, Daniel~A Spielman, and Nikhil Srivastava.
\newblock Interlacing families {I}: Bipartite {R}amanujan graphs of all
  degrees.
\newblock In {\em 2013 IEEE 54th Annual Symposium on Foundations of Computer
  Science}, pages 529--537. IEEE, 2013.

\bibitem[Mur01]{muraki2001monotonic}
Naofumi Muraki.
\newblock Monotonic independence, monotonic central limit theorem and monotonic
  law of small numbers.
\newblock {\em Infinite Dimensional Analysis, Quantum Probability and Related
  Topics}, 4(01):39--58, 2001.

\bibitem[Mur02]{muraki2002five}
Naofumi Muraki.
\newblock The five independences as quasi-universal products.
\newblock {\em Infinite Dimensional Analysis, Quantum Probability and Related
  Topics}, 5(01):113--134, 2002.

\bibitem[Nic93]{nica1993asymptotically}
Alexandru Nica.
\newblock Asymptotically free families of random unitaries in symmetric groups.
\newblock {\em Pacific Journal of Mathematics}, 157(2):295--310, 1993.

\bibitem[NS06]{nica2006lectures}
Alexandru Nica and Roland Speicher.
\newblock {\em Lectures on the combinatorics of free probability}, volume~13.
\newblock Cambridge University Press, 2006.

\bibitem[NW02]{nagnibeda2002random}
Tatiana Nagnibeda and Wolfgang Woess.
\newblock Random walks on trees with finitely many cone types.
\newblock {\em Journal of theoretical probability}, 15(2):383--422, 2002.

\bibitem[Oba04]{obata2004quantum}
Nobuaki Obata.
\newblock Quantum probabilistic approach to spectral analysis of star graphs.
\newblock {\em Interdisciplinary Information Sciences}, 10(1):41--52, 2004.

\bibitem[OW20]{o2020explicit}
Ryan O'Donnell and Xinyu Wu.
\newblock Explicit near-fully x-ramanujan graphs.
\newblock In {\em 2020 IEEE 61st Annual Symposium on Foundations of Computer
  Science (FOCS)}, pages 1045--1056. IEEE, 2020.

\bibitem[PV82]{pimsner1982k}
Mihai Pimsner and Dan Voiculescu.
\newblock K-groups of reduced crossed products by free groups.
\newblock {\em Journal of Operator Theory}, pages 131--156, 1982.

\bibitem[PW85]{picardello1985random}
Massimo Picardello and Wolfgang Woess.
\newblock Random walks on amalgams.
\newblock {\em Monatshefte f{\"u}r Mathematik}, 100(1):21--33, 1985.

\bibitem[Que94]{quenell1994combinatorics}
Gregory Quenell.
\newblock Combinatorics of free product graphs.
\newblock {\em Contemporary Mathematics}, 173:257--257, 1994.

\bibitem[RE08]{rao2008polynomial}
N~Raj Rao and Alan Edelman.
\newblock The polynomial method for random matrices.
\newblock {\em Foundations of Computational Mathematics}, 8(6):649--702, 2008.

\bibitem[Ser80]{serre1980sl}
Jean-Pierre Serre.
\newblock {SL2}.
\newblock In {\em Trees}, pages 69--136. Springer, 1980.

\bibitem[Soa86]{soardi1986resolvent}
Paolo~M. Soardi.
\newblock The resolvent for simple random walks on the free product of two
  discrete groups.
\newblock {\em Mathematische Zeitschrift}, 192(1):109--116, 1986.

\bibitem[Spe97]{speicher1997universal}
Roland Speicher.
\newblock On universal products.
\newblock In {\em The Fields Institute Communications}. Citeseer, 1997.

\bibitem[Spe98]{speicher1998combinatorial}
Roland Speicher.
\newblock {\em Combinatorial theory of the free product with amalgamation and
  operator-valued free probability theory}, volume 627.
\newblock American Mathematical Soc., 1998.

\bibitem[SS92]{sy1992discrete}
Polly~Wee Sy and Toshikazu Sunada.
\newblock Discrete {S}chr{\"o}dinger operators on a graph.
\newblock {\em Nagoya Mathematical Journal}, 125:141--150, 1992.

\bibitem[ST96]{stark1996zeta}
Harold~M Stark and Audrey~A Terras.
\newblock Zeta functions of finite graphs and coverings.
\newblock {\em Advances in Mathematics}, 121(1):124--165, 1996.

\bibitem[Sun92]{sunada1992group}
Toshikazu Sunada.
\newblock Group {$C*$}-algebras and the spectrum of a periodic
  {S}chr{\"o}dinger operator on a manifold.
\newblock {\em Canadian journal of mathematics}, 44(1):180--193, 1992.

\bibitem[SW97]{speicher1993boolean}
Roland Speicher and Reza Woroudi.
\newblock Boolean convolution.
\newblock In {\em Free Probability Theory}, volume~12 of {\em Fields Institute
  Communications}, pages 267--279. American Mathematical Society, 1997.

\bibitem[VDN92]{voiculescu1992free}
Dan~V. Voiculescu, Ken~J. Dykema, and Alexandru Nica.
\newblock {\em Free random variables}.
\newblock Number~1. American Mathematical Soc., 1992.

\bibitem[Voi85]{voiculescu1985symmetries}
Dan Voiculescu.
\newblock Symmetries of some reduced free product ${C}^*$-algebras.
\newblock In {\em Operator algebras and their connections with topology and
  ergodic theory}, pages 556--588. Springer, 1985.

\bibitem[Voi86]{voiculescu1986addition}
Dan Voiculescu.
\newblock Addition of certain non-commuting random variables.
\newblock {\em Journal of Functional Analysis}, 66(3):323--346, 1986.

\bibitem[Voi93]{voiculescu1993analogues}
Dan Voiculescu.
\newblock The analogues of entropy and of fisher's information measure in free
  probability theory, i.
\newblock {\em Communications in mathematical physics}, 155(1):71--92, 1993.

\bibitem[Voi95]{voiculescu1995operations}
Dan Voiculescu.
\newblock Operations on certain non-commutative operator-valued random
  variables.
\newblock {\em Ast{\'e}risque}, 232(1):243--275, 1995.

\bibitem[Woe86]{woess1986nearest}
Wolfgang Woess.
\newblock Nearest neighbour random walks on free products of discrete groups.
\newblock {\em Boll. Un. Mat. Ital.}, 5:961--982, 1986.

\bibitem[Woe87]{woess1987context}
Wolfgang Woess.
\newblock Context-free languages and random walks on groups.
\newblock {\em Discrete mathematics}, 67(1):81--87, 1987.

\bibitem[Woe00]{woess2000random}
Wolfgang Woess.
\newblock {\em Random walks on infinite graphs and groups}, volume 138.
\newblock Cambridge University Press, 2000.

\end{thebibliography}

\appendix

\section{Spectral Splitting}
\label{subsec: bandexamples}

Theorem \ref{thm:mass} provides an upper bound to the number of bands in the spectrum of the universal cover, and we have provided examples of when this bound is tight above.  A natural further question is to determine what properties of a graph result in one band, two bands, and so on, as the number of bands has some relevance in physics (see \cite{avni2020periodic}).  In this section, we discuss some interesting examples yielding various numbers of bands.

One way to vary the number of bands in the spectrum of the universal cover is to fix the base graph $\mathcal{G}$ and vary the edge weights $a_\diredge$.  For the specific case of regular trees, Fig\'a-Talamanca and Steger \cite{figa1994harmonic} gave an explicit description of this phenomenon. Here, to make it compatible with our context, the theorem below paraphrases Lemma 1.4 in Chapter 2 of  \cite{figa1994harmonic}. 

\begin{theorem}[Fig\'a-Talamanca, Steger] \label{thm:fts}
Let $\mathcal{G}$ be the graph with two vertices $u, v$ and $d$ parallel edges $\diredge_1, \dots , \diredge_d$ connecting them. Assume that $a_{\diredge_1} \geq \cdots \geq a_{\diredge_d} > 0$ and    $b_u=b_v =0$. Then, zero is in the spectrum of $A_{\mathcal{T}(\mathcal{G})}$ if and only if 
\begin{equation}
\label{eq: FTSweights}
    a_{\diredge_1}^2 \leq \sum_{i=2}^d a_{\diredge_i}^2. 
\end{equation}
\end{theorem}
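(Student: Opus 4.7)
I will combine Aomoto's equations (Theorem~\ref{thm:system}) with Sunada's band bound (Theorem~\ref{thm:mass}) via a branch analysis of the Cauchy transform at the origin. Because the graph automorphism swaps $u$ and $v$, the Cauchy transforms agree, $w_u(z) = w_v(z)$; write $w(z)$ for this common value. Then (\ref{eqn:system}) collapses to
\[ 2 z\, w(z) \;=\; 2 - d + \sum_{i=1}^d \sqrt{1 + 4 a_{e_i}^2\, w(z)^2}, \qquad (*) \]
valid for $z$ outside the convex hull of $\sigma(A_\mathcal{T})$. Since $\mathcal{T}$ is bipartite, $w$ is odd and $\sigma(A_\mathcal{T})$ is symmetric about zero; combined with Theorem~\ref{thm:mass} (at most $n=2$ bands), the spectrum is either $[-\rho,\rho]$ or $[-\rho,-\rho_0]\cup[\rho_0,\rho]$ with $\rho_0>0$. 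Hence $0\in\sigma(A_\mathcal{T})$ exactly when this spectrum is a single band.

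\textbf{The contrapositive.} Assume $0$ lies in the gap. Then $w$ is analytic near $0$ with $w(0)=0$ (by oddness), and because $w$ is real on $(-\rho_0,\rho_0)$ with $w'(z) = -\int (z-t)^{-2}\,d\mu_{A_\mathcal{T}}(t) < 0$, we have $w'(0)<0$. I analytically continue $(*)$ from $\{|z|>\rho\}$ through the upper half-plane into the gap; at $w=0$ each factor $\sqrt{1+4 a_{e_i}^2 w^2}$ evaluates to some $\epsilon_i\in\{\pm 1\}$ determined by the branch picked up along the continuation. Setting $z=w=0$ in the continued equation forces $\sum_{i=1}^d \epsilon_i = d-2$, so exactly one index $i_0$ has $\epsilon_{i_0}=-1$ while the remaining $\epsilon_i$ equal $+1$. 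Expanding $(*)$ to order $w^2$ via $\epsilon_i\sqrt{1+4 a_{e_i}^2 w^2} = \epsilon_i(1 + 2 a_{e_i}^2 w^2) + O(w^4)$, the constants cancel by $\sum_i \epsilon_i = d-2$ and one obtains
\[ z \;=\; w\!\left(\sum_{i\ne i_0} a_{e_i}^2 \;-\; a_{e_{i_0}}^2\right) + O(w^3), \]
so $w'(0) = \bigl(\sum_{i\ne i_0} a_{e_i}^2 - a_{e_{i_0}}^2\bigr)^{-1}$. The condition $w'(0)<0$ is equivalent to $a_{e_{i_0}}^2 > \sum_{i\ne i_0} a_{e_i}^2$; given the ordering $a_{e_1}\ge\cdots\ge a_{e_d}$, only $i_0=1$ can satisfy this, giving $a_{e_1}^2 > \sum_{i\ge 2} a_{e_i}^2$. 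Hence if $a_{e_1}^2 \le \sum_{i\ge 2} a_{e_i}^2$ no gap around $0$ can exist, forcing $0\in\sigma(A_\mathcal{T})$.

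\textbf{Converse and main obstacle.} Conversely, assume $a_{e_1}^2 > \sum_{i\ge 2} a_{e_i}^2$. Solving the branch-modified version of $(*)$ with $\epsilon_1=-1$ and $\epsilon_i=+1$ for $i\ge 2$ yields, via a Newton-polygon argument applied to the implicit equation at $(z,w)=(0,0)$, a function $\tilde w(z)$ analytic near $z=0$ with $\tilde w(0)=0$ and $\tilde w'(0) = 1/\!\bigl(\sum_{i\ge 2} a_{e_i}^2 - a_{e_1}^2\bigr)<0$. Using the Herglotz property of the Cauchy transform together with the outer determination of $w$ by $(*)$ on $\{|z|>\rho\}$, one identifies $\tilde w$ with the analytic continuation of $w$ through $0$, so that $0$ lies in the gap. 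The main technical point is rigorously justifying the branch assignment: as $z$ is deformed from $\{|z|>\rho\}$ around the upper spectral band into the gap, each $1 + 4 a_{e_i}^2 w(z)^2$ traces a curve in $\mathbb{C}$, and one must show that exactly the curve corresponding to $i_0=1$ winds around the branch point at the origin. Intuitively, the largest weight $a_{e_1}$ is the one whose branch point $w = \pm i/(2 a_{e_1})$ is reached first as $w$ moves along the negative imaginary axis while $z$ approaches the gap from above, so only that square root flips sign, which is precisely what singles out $i_0=1$ and yields the FTS inequality.
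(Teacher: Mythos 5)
The paper itself offers no proof of Theorem~\ref{thm:fts}: it is quoted from Fig\`a-Talamanca and Steger, so your argument is a new route rather than a variant of anything in the text. Your forward direction (the inequality implies $0\in\sigma(A_\T)$) is essentially sound. By the lifted $u\leftrightarrow v$ symmetry $w_u=w_v=:w$; bipartiteness makes the rooted spectral measure symmetric, so $w$ is odd and, if $0$ lay in a gap, $w$ would be analytic at $0$ with $w(0)=0$ and $w'(0)=-\int t^{-2}\,d\mu<0$. Continuing the identity $(*)$ along a path in $\C\setminus\sigma(A_\T)$ that avoids the discrete zero sets of the functions $1+4a_{e_i}^2w^2$ (each continued branch $g_i$ still satisfies $g_i^2=1+4a_{e_i}^2w^2$, so $g_i(0)=\pm1$), the equation at $z=0$ forces $\sum_i\epsilon_i=d-2$, hence exactly one flipped sign, and the order-$w^2$ expansion gives $1/w'(0)=\sum_{i\ne i_0}a_{e_i}^2-a_{e_{i_0}}^2<0$, which by the ordering forces $i_0=1$ and $a_{e_1}^2>\sum_{i\ge2}a_{e_i}^2$. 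Crucially this half never needs to know \emph{which} branch flips, only that exactly one does, so the contrapositive is complete modulo routine continuation bookkeeping.

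The genuine gap is the converse: assuming $a_{e_1}^2>\sum_{i\ge2}a_{e_i}^2$, you must show $0\notin\sigma(A_\T)$, and exhibiting a formal analytic solution $\tilde w$ of a branch-modified equation with $\tilde w(0)=0$, $\tilde w'(0)<0$ does not do this. The Cauchy transform is determined by the measure, not by the functional equation; the same Newton-polygon computation produces a candidate $\tilde w$ for \emph{every} choice of $i_0$, and nothing you write identifies any of them with the boundary behaviour of the true Herglotz function $w$ near $0$. The step you label the ``main technical point'' --- that exactly the factor $1+4a_{e_1}^2w^2$ winds around its branch point as $z$ moves into the putative gap --- is precisely the assertion that a gap exists, so the argument is circular as it stands; the ``reached first along the negative imaginary axis'' heuristic presupposes that $\lim_{y\to0^+}|w(iy)|$ is finite and exceeds $1/(2a_{e_1})$ but not $1/(2a_{e_2})$, which is unproved. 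A workable way to close this half inside the paper's own toolkit: by Lemma~\ref{lemmaunivcover} (or Proposition~\ref{proprademacher}), $0\in\sigma(A_\T)$ if and only if $x=\sum_{i=1}^d a_{e_i}\lambda(g_i)$ fails to be invertible in $C^*_{\mathrm{red}}(\mathbb{F}_d)$, i.e.\ iff $0$ lies in the support of the distribution of $x^*x$; for free Haar unitaries this support is computable in closed form (this is the operator-algebraic content of Fig\`a-Talamanca--Steger, and is the style of argument the paper itself runs for the loop-plus-two-edges graph in Section~\ref{subsec: bandexamples}, where it exhibits an explicit inverse when the weights are unequal and cites Haagerup--Larsen when they are equal). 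Either import that computation or give an honest proof of the branch assignment; as written, the ``if'' half of the equivalence is asserted rather than proved.
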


Let $\mathcal{G}$ be as in the above theorem. Note that from Theorem \ref{thm:mass} it follows that the spectrum of $A_{\T(\G)}$ has at most two bands. Combining this with the fact that the spectrum is symmetric about zero (as $\T$ is bipartite), we have the following: 

\begin{observation}
Using the notation of Theorem \ref{thm:fts}, $A_{\T(\G)}$ has a connected spectrum if and only if inequality (\ref{eq: FTSweights}) is satisfied.  Moreover, this remains true if we add a constant to $b_u$ and $b_v$.
\end{observation}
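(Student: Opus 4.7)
The plan is to reduce the statement to Theorem \ref{thm:fts} by combining the symmetry of the spectrum about the origin with the band-count bound from Theorem \ref{thm:mass}. Since $\T(\G)$ is a tree it is bipartite, so for any Jacobi operator on $\T(\G)$ with vanishing diagonal there is a unitary involution $J$ (the $\pm 1$ coloring of the bipartition) such that $J A_{\T(\G)} J = -A_{\T(\G)}$. In particular, when $b_u = b_v = 0$, the spectrum $\sigma(A_{\T(\G)})$ is symmetric about $0$. Since $|V(\G)| = 2$, Theorem \ref{thm:mass} guarantees that $\sigma(A_{\T(\G)})$ has at most two connected components.

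The key observation is then purely topological. A nonempty compact subset of $\R$ that is symmetric about $0$ and has at most two connected components is either a single interval $[-\rho, \rho]$, which contains $0$, or a disjoint union $[-\rho, -\rho'] \cup [\rho', \rho]$ with $0 < \rho' \leq \rho$, which does not contain $0$. Hence
\[
\sigma(A_{\T(\G)}) \text{ is connected} \iff 0 \in \sigma(A_{\T(\G)}).
\]
Applying Theorem \ref{thm:fts} converts the right-hand side into the inequality (\ref{eq: FTSweights}), yielding the claim.

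For the moreover part, suppose we add the same constant $c$ to both $b_u$ and $b_v$. Because $V(\G) = \{u, v\}$, the Jacobi matrix on $\G$ changes by a scalar: $A_\G \mapsto A_\G + cI$. Pulling back to the universal cover commutes with scalar shifts (every vertex of $\T(\G)$ lies over $u$ or $v$ and therefore receives the same added constant), so the lifted operator becomes $A_{\T(\G)} + cI$, whose spectrum is a translation of the original. The number of connected components, and in particular whether the spectrum is connected, is preserved, and the previous paragraph applies after recentering.

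The only (mild) obstacle is that Theorem \ref{thm:fts} is stated for the case $b_u = b_v = 0$, which is exactly why the shift argument needs to be handled separately for the moreover clause. Beyond that, no additional technical input is required: the argument is essentially a one-line topological observation combined with the two cited theorems.
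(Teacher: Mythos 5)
Your proposal is correct and follows essentially the same route as the paper: the authors likewise combine the two-band bound from Theorem \ref{thm:mass} with the symmetry of $\sigma(A_{\T(\G)})$ about zero (from bipartiteness of $\T$) to conclude that connectedness is equivalent to $0 \in \sigma(A_{\T(\G)})$, and then invoke Theorem \ref{thm:fts}; the ``moreover'' clause is handled by the same translation-of-spectrum argument. No gaps.
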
 

We pause to mention some examples.  If $d=2k+1$ and the $a_{\diredge_i}$ are chosen to be distinct and in such a way that (\ref{eq: FTSweights}) holds, we get a $(2k+1)$-regular tree with non-constant coefficients and connected spectrum.  On the other hand, if $d =2k$ and the $a_{\diredge_i}$ are chosen with the same characteristics as above, we get that $A_{\mathcal{T}(\G)}$ has connected spectrum\footnote{These two examples answer Conjectures 9.6 and 9.7 in \cite{avni2020periodic} in the negative, and we thank Barry Simon for pointing out that the case $d=3$ negatively answers their Conjecture 10.5.}.

More can be said about the graph $\G$ from Theorem \ref{thm:fts}:

\begin{proposition}
Using the notation of Theorem \ref{thm:fts}, if $b_u$ and $b_v$ are instead taken to be arbitrary distinct reals, then the spectrum of $A_{\T}$ has two bands.
\end{proposition}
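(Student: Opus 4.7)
The plan is to exploit the $2\times 2$ operator-valued representation of $A_{\mathcal{T}(\mathcal{G})}$ provided by Lemma \ref{lemmaunivcover}, and then locate the spectrum by a Schur-complement computation. Since $\mathcal{G}$ has only two vertices, the lemma identifies $A_{\mathcal{T}(\mathcal{G})}$ with
$$ M \;:=\; \begin{pmatrix} b_u & X \\ X^* & b_v \end{pmatrix} \;\in\; M_2(\mathbb{C}) \otimes C^*_{\mathrm{red}}(\mathbb{F}_d), \qquad X \;:=\; \sum_{i=1}^d a_{e_i} \lambda(g_i),$$
where the $\lambda(g_i)$ are free Haar unitaries. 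I will analyze $\sigma(M)$ in terms of $\sigma(X^*X)$ and then combine this with Theorem \ref{thm:mass} to count bands.

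The key algebraic step is that for $\lambda \notin \{b_u, b_v\}$ the standard Schur-complement factorization shows $M - \lambda$ is invertible in the ambient $C^*$-algebra if and only if $(\lambda - b_u)(\lambda - b_v)\cdot 1 - X^*X$ is invertible in $C^*_{\mathrm{red}}(\mathbb{F}_d)$. Therefore
$$ \sigma(M) \setminus \{b_u, b_v\} \;=\; \{\lambda \in \mathbb{R} : (\lambda - b_u)(\lambda - b_v) \in \sigma(X^*X)\}.$$
Since $\sigma(X^*X) \subseteq [0,\infty)$ and the quadratic $f(\lambda) = (\lambda - b_u)(\lambda - b_v)$ is strictly negative on $(\min(b_u,b_v), \max(b_u,b_v))$, this open interval contains no spectrum. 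Combined with the closedness of $\sigma(A_\mathcal{T})$, this yields
$$ \sigma(A_\mathcal{T}) \;\subseteq\; (-\infty, \min(b_u,b_v)] \;\cup\; [\max(b_u,b_v), \infty),$$
producing an automatic gap whenever $b_u \neq b_v$.

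To check that both half-lines actually meet the spectrum, I will use that $X \neq 0$ (the $\lambda(g_i)$ are linearly independent and each $a_{e_i} > 0$), so $\sigma(X^*X)$ contains some $s > 0$. The restriction of $f$ to each half-line $(-\infty, \min(b_u,b_v)]$ and $[\max(b_u,b_v), \infty)$ is a bijection onto $[0, \infty)$, so $f^{-1}(s)$ produces a spectral point on each side. Hence $\sigma(A_\mathcal{T})$ has at least two connected components, while Theorem \ref{thm:mass} gives at most $n=2$. Consequently the spectrum has exactly two bands.

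I do not anticipate a serious obstacle. The one point to verify carefully is that the Schur-complement identity passes correctly from matrices over $\mathbb{C}$ to matrices over the $C^*$-algebra, but this is routine: the diagonal entries of $M - \lambda$ are scalar multiples of the identity and hence invertible whenever $\lambda \neq b_u, b_v$, and the resulting LU-type factorization of $M - \lambda$ is valid verbatim. The two excluded values $\lambda \in \{b_u, b_v\}$ are handled by closedness of the spectrum and do not affect the band-counting argument.
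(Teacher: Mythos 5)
Your proof is correct, but it takes a different route from the paper's. Both arguments start from the same place, the $2\times 2$ operator-matrix representation $\left(\begin{smallmatrix} b_u & X \\ X^* & b_v\end{smallmatrix}\right)$ supplied by Lemma \ref{lemmaunivcover}, and both ultimately rest on the same positivity fact, namely that a strictly positive scalar plus $X^*X$ is invertible. The paper, however, first recenters the diagonal to $\pm t$ with $t \ne 0$, observes that conjugation by $U = \left(\begin{smallmatrix} 0 & -1 \\ 1 & 0\end{smallmatrix}\right)$ makes the spectrum symmetric about zero, and then squares the matrix to get $\operatorname{diag}(xx^* + t^2,\, x^*x + t^2)$, which is invertible; this excludes the single point $0$ (i.e.\ the midpoint $(b_u+b_v)/2$) from the spectrum, and the symmetry guarantees spectrum on both sides. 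Your Schur-complement computation is more informative: it identifies $\sigma(M)\setminus\{b_u,b_v\}$ as $f^{-1}(\sigma(X^*X))$ for $f(\lambda)=(\lambda-b_u)(\lambda-b_v)$, which excludes the \emph{entire} open interval between $b_u$ and $b_v$ (not just its midpoint) and lets you exhibit spectral points on each side directly from $\|X\|^2 \in \sigma(X^*X)$ without needing any symmetry. The paper's argument is shorter and the symmetry trick is elegant; yours generalizes more readily (it does not depend on being able to symmetrize a $2\times2$ situation) and effectively computes the spectrum rather than merely disconnecting it. One cosmetic remark: your displayed identity for $\sigma(M)\setminus\{b_u,b_v\}$ should also carry the exclusion of $\{b_u,b_v\}$ on the right-hand side, since the Schur reduction says nothing about those two values; this does not affect your argument, as you only use the containment on the open interval and the two points $\lambda_\pm$, which differ from $b_u$ and $b_v$.
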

\begin{proof}
Applying Lemma \ref{lem:repofATinfreegroup}, in particular we obtain that the spectrum of $A_{\calT}$ is the spectrum of an operator-valued matrix of the form
\[\begin{pmatrix} b_1 & x \\ x^* & b_2 \end{pmatrix} \in M_2(\mathbb{C}) \otimes C^*_{\text{red}}(F_d).\]  Since $b_1 \ne b_2$ by assumption, let us subtract a suitable multiple of the identity to obtain \[X \myeq \begin{pmatrix} t & x \\ x^* & -t \end{pmatrix}\] for some real $t$; this merely translates the spectrum.  Note that the spectrum of $X$ is symmetric about zero, as $U X U^{-1} = -X$ for the unitary $U = \begin{pmatrix} 0 & -1 \\ 1 & 0 \end{pmatrix}$.\footnote{We thank Barry Simon for this observation.}  Since $X^2 = \begin{pmatrix} y & 0 \\ 0 & y^* \end{pmatrix}$ where $y = xx^* + t^2$ is invertible, 0 is not in the spectrum of $X$. Thus the spectrum has a gap, as desired.
\end{proof}


Now we show that non-constant-degree universal covers with  similar characteristics can be constructed. To this end, we will now use $\mathcal{G}$ to denote the graph \includegraphics{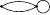} consisting of two vertices $u, v$, with a loop $\diredge_1$ on $u$ and two parallel edges $\diredge_2, \diredge_3$ connecting $u$ and $v$. Put $b_u=b_v =0$ and assume $a_{\diredge_i}>0$. 

 \begin{figure}[h]
  \centering
 \begin{tabular}{ll}
\raisebox{3.67\height}{\includegraphics[scale=3.3]{counterexgraph_cropped.pdf}} \qquad \qquad \qquad  & \includegraphics[scale=0.5]{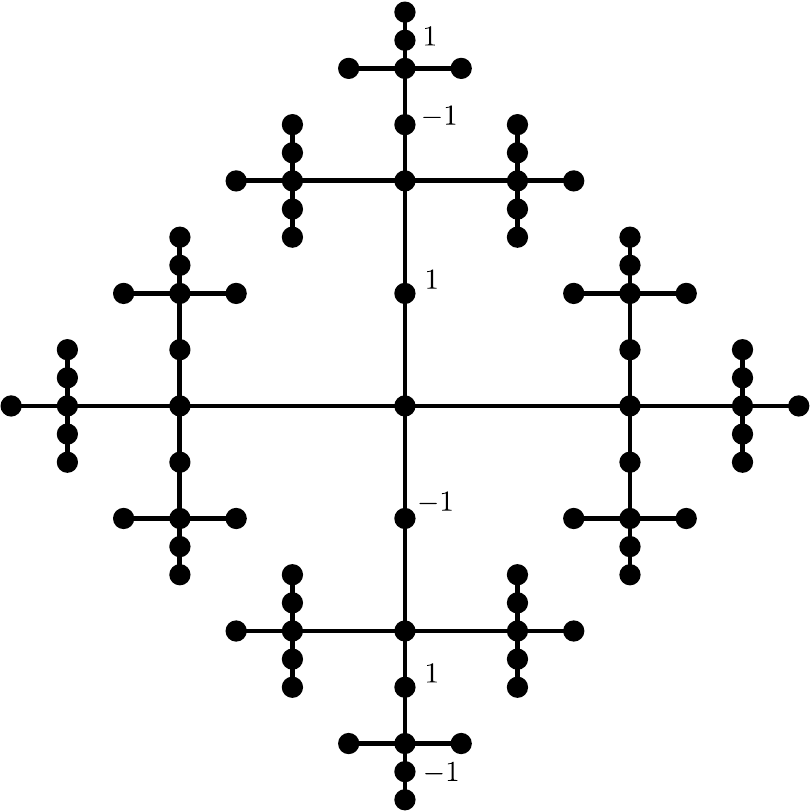} \\
\end{tabular}

 \caption{The graph $\G$ and a finite portion of its universal cover $\calT$, along with an approximate eigenvector for zero.}  
  \label{fig:counterex_tree}

 \end{figure}

As in the example of regular trees, since $\mathcal{G}$ has two vertices,  we also have that the spectrum of $A_{\mathcal{T}(\G)}$ is connected if and only if it contains zero. By Lemma \ref{lem:repofATinfreegroup}, determining the invertibility of $A_{\calT}$ is equivalent to deciding if the following operator-valued matrix is invertible:
$$X \myeq \left(\begin{array}{cc}
    a_{\diredge_1}(\lambda(g_1)+ \lambda(g_1)^*) & a_{\diredge_2}\lambda(g_2)+ a_{\diredge_3}\lambda(g_3)   \\
       a_{\diredge_2}\lambda(g_2)^*+ a_{\diredge_3}\lambda(g_3)^* & 0 
\end{array}\right) \in M_2(\C)\otimes C_{\mathrm{red}}^*(\mathbb{F}_3), $$
where $g_1, g_2$ and $g_3$ are the canonical generators of $\mathbb{F}_3$. 

We separate our analysis into two cases. First assume that $a_{\diredge_2}\neq a_{\diredge_3}$. We can further assume without loss of generality that $a_{\diredge_2} > a_{\diredge_3}$. Note that 
\begin{equation}
\label{eq:invertibility}
 a_{\diredge_2}\lambda(g_2)+ a_{\diredge_3}\lambda(g_3) = a_{\diredge_2} \lambda(g_2) \left(1+ \frac{a_{\diredge_3}}{a_{\diredge_2}} \lambda(g_2^{-1} g_1) \right) . 
\end{equation}
From $a_{\diredge_2} > a_{\diredge_3}$ we have $\|\frac{a_{\diredge_3}}{a_{\diredge_2}} \lambda(g_2^{-1} g_1)\| < 1$, which implies that $1+ \frac{a_{\diredge_3}}{a_{\diredge_2}} \lambda(g_2^{-1} g_1)$ is invertible, and in view of (\ref{eq:invertibility}) this implies that $a_{\diredge_2}\lambda(g_2)+ a_{\diredge_3}\lambda(g_3)$ is also invertible.  Set $x \myeq (a_{\diredge_2}\lambda (g_2)+a_{\diredge_3}\lambda (g_{3}))^{-1}$ and
$$Y = \left(\begin{array}{cc}
   0  & x^* \\
 x    & -a_{\diredge_1} x (\lambda(g_1)+\lambda(g_1)^*) x^*
\end{array} \right).$$
It is easy to see that $X Y = I_2 \otimes 1$ and hence that zero is not in the spectrum of $A_{\calT}$ when $a_{\diredge_2}\neq a_{\diredge_3}$. 

Now assume that $a_{\diredge_2}=a_{\diredge_3}$. In this case one can show that $A_{\calT}$ is not invertible.  Indeed, let $x_n$ be the vector whose entries alternate between $1$ and $-1$ along $n$ consecutive degree-two vertices on the $y$-axis of $\calT$ as depicted in Figure \ref{fig:counterex_tree}.  Then $\Vert x_n \Vert = \sqrt{n}$ while $\Vert A_{\calT} x_n \Vert = \sqrt{2}$, so $\{x_n\}$ is a sequence of approximate eigenvectors for zero.  Alternatively, one can show that $X$ is not invertible by noting that $X(1, 2)$ is a scalar multiple of $\lambda(g_2)+\lambda(g_3)$, which is not invertible since its spectrum is  $\{z \in \C: |z|\leq \sqrt{2}\}$ (see Example 5.5 in \cite{haagerup1999brown}). This means that both $X(1, 2)$  and $X(2, 1)$ are not invertible, and hence $X$ is not invertible. 

This discussion can be summarized as follows. 

\begin{example}
\label{ex:twovertex}
 Let $\mathcal{G}$ be the graph \includegraphics{counterexgraph_cropped.pdf} consisting of two vertices $u, v$, with a loop $\diredge_1$ on $u$ and two parallel edges $\diredge_2, \diredge_3$ connecting $u$ and $v$. Put $b_u=b_v =0$ and assume that $a_{\diredge_i}>0$ for $i=1, 2, 3$. Then, if $a_{\diredge_1} = a_{\diredge_2}$, the spectrum of $A_{\calT}$ is connected; otherwise it has two bands. 
\end{example}

This example disproves Conjecture 9.5 in \cite{avni2020periodic}, since it provides a graph $\G$ of non-constant degree and specific coefficients for which   $A_{\calT}$ has a connected spectrum. 

Finally, in \cite{avni2020periodic} an interesting conjecture was made regarding the possibility of generalizing the Borg-Hochstadt theorem. Roughly speaking, in the language of our work, it was conjectured that for an arbitrary universal cover $\T$,  if the cumulative distribution function of the density of states of $A_\T$  is of the form $j/p$ inside every gap, then there exists a quotient of $\T$, say $\G$, such that $|V(\G)|$ is a divisor of $p$.\footnote{Actually, the conjecture was stated in terms of the notion of period discussed in \cite{avni2020periodic}, where an ultimate definition of period was left open. However,  there does not seem to be a sensible definition of period that rules out the counterexamples presented here.} In some sense, Example \ref{ex:twovertex} is already a counterexample of this conjecture, since in this case when $a_{\diredge_1}=a_{\diredge_2}$ there is only one band in the spectrum, while the smallest quotient of the universal cover has two vertices. However, it is still of interest to find an example with an interior spectral  gap where the extension of  the Borg-Hochstadt theorem does not hold. We do this below\footnote{We should mention that our example consists of a graph with a leaf, while in \cite{avni2020periodic} only leafless graphs were considered.}. 

Let $m\geq 4$ and let $g_1, \dots, g_m$ be the canonical generators of $\mathbb{F}_m$. Take $x \in C_{\mathrm{red}}^*(\mathbb{F}_m)$  self-adjoint. If $x$ is invertible then 
\begin{equation}
\label{eq:3by3matrix}
\left(\begin{array}{ccc}
    x & \lambda(g_1) & 0  \\
    \lambda(g_1)^* & 0 & \lambda(g_2) \\
     0 & \lambda(g_2)^* & 0
\end{array}\right) \left( \begin{array}{ccc}
    x^{-1} & 0 & -x^{-1} \lambda(g_1 g_2)  \\
    0 & 0 & \lambda(g_2) \\ - \lambda(g_2^{-1} g_1^{-1}) x^{-1}  & \lambda(g_2^{-1}) & \lambda(g_{2}^{-1} g_1^{-1}) x^{-1} \lambda(g_1 g_2)
\end{array} \right) = I_3\otimes 1.
\end{equation}
Now consider a graph $\G$ with three vertices $u, v, w$ and edges $\diredge_1, \diredge_2$ connecting $u$ with $v$, and $v$ with $w$, respectively. Assume that $b_u=b_v=b_w=0$. If we add whole-loops on the vertex $u$ then by Lemma \ref{lem:repofATinfreegroup} $A_{\calT}$ will have the form of the first matrix in the left side of (\ref{eq:3by3matrix}). Moreover, $x$ will correspond to the loops on $u$ and it can be made invertible by putting at least two loops on $u$ and varying their Jacobi coefficients as shown in Example \ref{ex:twovertex}. So, for the cases where $x$ is invertible, zero will not be in the spectrum of $A_\T$, but by Theorem \ref{thm:mass} and since the density of states is symmetric about zero, this means that $A_\T$ has exactly two bands, each with mass $1/2$.   This provides an example of a universal cover whose smallest quotient has three vertices, and where the mass of the bands  has the form $j/2$. 

\section{Behavior of the Density of States at the Right Edge of the Spectrum} 
\label{app:densityattheedge}

\label{sec:usingthesystemofequations}
Here we use the same notation and setup as the one described at the beginning of Section \ref{sec:rademacherrepresentation}. 

Using Aomoto's systems of equations for the Cauchy transforms, we may prove the following condition on when the spectral radii of a graph and its universal cover match. The proof will use Theorem 6.6 of \cite{avni2020periodic} which states that the Cauchy transforms defined in Theorem \ref{thm:system} are algebraic. Among other things, this result implies that if $\mu$ is the density of states of $A_\T$, and $\rho_r$ is the right edge of $\Spec(A_\T)$, then the (possibly infinite) limit $\lim_{\epsilon\to 0
^+}\mu((\rho_r-\epsilon, \rho_r])/\epsilon$ exists.

\begin{proposition} \label{thm:perron}
   Let $\mu$ and $\rho_r$ denote the density of states of $A_\T$ and maximum element of the spectrum of $A_\mathcal{T}$ respectively.  Suppose that 
 \begin{equation}
 \label{eq:limitpositive}
 \lim_{\eps \to 0^+} \frac{\mu((\rho_r - \eps, \rho_r])}{\eps} > 0.
 \end{equation}
Then the maximum eigenvalue of $A_{\G}$ is equal to $\rho_r$.
\end{proposition}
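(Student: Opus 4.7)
The plan is to use Aomoto's system (Theorem \ref{thm:system}) together with the positivity hypothesis to force $w_u(z)\to +\infty$ at every vertex as $z\to\rho^+$, and then extract from the leading-order asymptotics an eigenvalue equation $A_\G \vec v=\rho \vec v$ admitting a nonnegative eigenvector. The Perron-Frobenius theorem then identifies $\rho$ as the maximum eigenvalue of $A_\G$. Throughout, I restrict to real $z>\rho$ so that each $w_u(z)>0$.

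First, since $\mu=\tfrac{1}{n}\sum_u \mu_{A_\T,\tilde u}$, the hypothesis forces some rooted measure $\mu_{A_\T,\tilde u_0}$ to satisfy the same positive-density condition at $\rho$; monotone convergence in $w_{u_0}(z)=\int (z-t)^{-1}\,d\mu_{A_\T,\tilde u_0}(t)$ then gives $\lim_{z\to\rho^+} w_{u_0}(z)=+\infty$, since $(\rho-t)^{-1}$ is non-integrable against any measure whose density stays bounded away from zero near $\rho$. To propagate this blow-up to every vertex, Aomoto's equation at any neighbor $v$ of $u_0$ contains the summand $\sqrt{1+4a_{u_0 v}^2 w_{u_0}(z)w_v(z)}$, so $a_{u_0 v}>0$ and $w_v(z)>0$ together force $w_v(z)\to\infty$. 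Connectedness of $\G$ then spreads the blow-up to every $u\in V(\G)$.

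Next, expanding $\sqrt{1+x}=\sqrt{x}+O(x^{-1/2})$ inside each square root of Aomoto's equation gives $\sqrt{1+4a_{uv}^2 w_u w_v}=2a_{uv}\sqrt{w_u w_v}+O((w_u w_v)^{-1/2})$ and $\sqrt{1+4a_l^2 w_u^2}=2a_l w_u+O(w_u^{-1})$. Substituting, collecting the coefficient of $w_u$, and dividing by $\sqrt{w_u(z)}$ yields
\[
\left(z-b_u-2\!\!\sum_{l=\{u,u\}\in E(\G)}\!\!a_l\right)\sqrt{w_u(z)}=\sum_{v\neq u,\,\{u,v\}\in E(\G)} a_{uv}\sqrt{w_v(z)}+O\!\left(\tfrac{1}{\sqrt{w_u(z)}}\right).
\]
Recognizing $b_u+2\sum_l a_l$ as $(A_\G)_{uu}$ and $a_{uv}$ as $(A_\G)_{uv}$, and setting $\vec v(z):=(\sqrt{w_u(z)})_{u\in V(\G)}$, this reads $(A_\G-zI)\vec v(z)=\vec \epsilon(z)$ with $\|\vec\epsilon(z)\|_\infty$ uniformly bounded as $z\to\rho^+$.

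Finally, normalize $\hat v(z):=\vec v(z)/\|\vec v(z)\|$; since $\|\vec v(z)\|\to\infty$, one has $(A_\G-zI)\hat v(z)\to 0$. Compactness of the unit sphere in $\R^n$ produces a subsequence $z_k\to\rho^+$ along which $\hat v(z_k)\to \hat v^*$ with $\hat v^*\geq 0$, $\|\hat v^*\|=1$ and $A_\G \hat v^*=\rho \hat v^*$. Since $a_e>0$ and $\G$ is connected, $A_\G+CI$ is an irreducible nonnegative matrix for $C$ large enough, so Perron-Frobenius pins $\rho$ as its (simple) maximum eigenvalue and thus as the maximum eigenvalue of $A_\G$. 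The main obstacle lies in the second step: the loop terms must combine with $b_u$ exactly to reconstruct $(A_\G)_{uu}$, and the remainders must be uniformly $o(\|\vec v(z)\|)$ so that normalization kills them. Both points are handled by the explicit form of Aomoto's equation and the strict positivity of the $a_e$.
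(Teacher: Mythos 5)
Your argument is correct in substance and follows the same skeleton as the paper's proof (blow-up of the Cauchy transforms at $\rho$, Aomoto's equations, Perron--Frobenius), but both technical steps are executed differently. For the blow-up, the paper first invokes the algebraicity theorem of Avni--Breuer--Simon to get absolute continuity and power-like behaviour of $\mu$ near $\rho$, and only then runs the logarithmic divergence estimate; you work directly with the measure. One caveat here: your justification that ``$(\rho-t)^{-1}$ is non-integrable against any measure whose density stays bounded away from zero'' presumes a density, which the hypothesis does not supply for an individual rooted measure $\mu_{A_\T,\tilde u_0}$ (a priori it need not be absolutely continuous near $\rho$, and from the averaged hypothesis you only extract $\limsup_{\eps\to 0^+}\mu_{A_\T,\tilde u_0}((\rho-\eps,\rho])/\eps>0$ for some $u_0$). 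The conclusion survives without any regularity input: writing $G(s)=\mu_{A_\T,\tilde u_0}([\rho-s,\rho))$, the layer-cake bound $\int_{(0,\eps_0]} s^{-1}\,dG(s)\ \ge\ \int_0^{\eps_0} G(r)\,r^{-2}\,dr$ evaluated along a geometrically decreasing sequence of scales on which $G(r)\ge cr$ already gives divergence, and monotone convergence then yields $w_{u_0}(z)\to\infty$. So the step is repairable, and in fact your route avoids the external algebraicity input the paper relies on.

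For the endgame, the paper proves that the ratios $w_u(t)/w_v(t)$ have finite positive limits as $t\to\rho^+$, passes to the limit in Aomoto's equation exactly, and obtains a strictly positive eigenvector $\bigl(\sqrt{\widetilde{w_u}}\bigr)_u$; you instead expand the square roots to extract the approximate eigenvector $\bigl(\sqrt{w_u(z)}\bigr)_u$, normalize, and use compactness of the unit sphere. This yields only a nonnegative limiting eigenvector, but the version of Perron--Frobenius you need (an irreducible nonnegative matrix with a nonnegative nonzero eigenvector for $\lambda$ forces $\lambda$ to be the spectral radius) is standard, and your route sidesteps having to justify that the ratio limits exist --- a point the paper asserts rather quickly. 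Your asymptotic expansion, including the identification of $b_u+2\sum_{l=\{u,u\}}a_l$ with $(A_\G)_{uu}$ and the $O(1/\sqrt{w_u})$ error control, checks out.
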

\begin{proof}
Let $w(z)$ denote the Cauchy transform of $\mu$. We begin by proving that the condition in (\ref{eq:limitpositive}) implies that $\lim_{t\to \rho_r^+} w(t) = \infty$. There are two cases. If $\mu$ has an atom at $\rho_r$ the statement is clearly true. If there is no atom at $\rho_r$, we note that  Theorem 6.6 in \cite{avni2020periodic} implies that $w(t)$ is algebraic (since the sum of algebraic functions is algebraic), and hence $\mu$ is absolutely continuous in a  neighborhood of $\rho_r$ and has a power-like behavior near $\rho_r$ (see Theorem 2.9 in \cite{anderson2008law} for a formal statement). In particular, (\ref{eq:limitpositive}) implies that there is some $C>0$ such that for all $\epsilon > 0$ small enough we have $\frac{d\mu}{dx}(s) > C$ for all $s \in (\rho_r-\epsilon, \rho_r)$. Hence, for any $\epsilon > 0$ 
$$w(\rho_r+\epsilon^2) \geq \int_{\rho_r-\epsilon}^{\rho_r} \frac{1}{\rho_r+\epsilon^2-s} d\mu(s) \geq C \int_{\rho_r-\epsilon}^{\rho_r} \frac{1}{\rho_r+\epsilon^2-s} ds = C \log\left(1+\frac{1}{\epsilon}\right).$$
So taking $\epsilon\to 0^+$ we get \begin{equation} \label{eqn:cauchyblowup} \lim_{t \to \rho_r^+} w(t) = \infty. \end{equation}
Now, for all  $u\in V(\G)$, let $w_u(z)$ be defined as in (\ref{eq:rootedcauchydef}) and  recall that $w(z) = \frac{1}{|V(\mathcal{G})|} \sum_{u \in V(\mathcal{G})} w_u(z)$.  

Note that $w_u(t)$ is analytic, positive and strictly decreasing for $t > \rho_r$, so $\lim_{t \to \rho_r^+} w_u(t)$ exists and lies in $(0, \infty]$.  
 Thus, for all $u, v \in V(\G)$, we have that $\lim_{t \to \rho_r^+} w_v(t) / w_u(t)$ exists and lies in $[0, \infty]$.  In fact, we claim $\lim_{t \to \rho_r^+} w_v(t) / w_u(t) < \infty$ when $u$ is a neighbor of $v$. If not, then setting $z = t$ in Theorem \ref{thm:system} and rearranging, we would have
 \begin{equation} \label{eqn:t} t= b_u +  \frac{2-\operatorname{deg}(u)}{2w_u(t)} + \displaystyle\sum_{\diredge\in \sigma(u)} \left(\frac{1}{4 w_{\sigma(\diredge)}(t)^2}+a_{\diredge}^2 \frac{w_{\tau(\diredge)}(t)}{w_{\sigma(\diredge)}(t)}\right)^{1/2},\end{equation}
 and the right hand side would diverge as $t \to \rho_r^+$ while the left hand side would converge, a contradiction.  

Taking the reciprocal and switching the roles of $u$ and $v$, we in fact obtain \begin{equation} \label{eqn:ratios} 0 < \lim_{t \to \rho_r^+} w_v(t) / w_u(t) < \infty\end{equation} whenever $u$ and $v$ are neighbors. By the connectedness of $\G$, (\ref{eqn:ratios}) actually holds for arbitrary vertices $u ,v$.  Together with (\ref{eqn:cauchyblowup}), this implies  $\lim_{t \to \rho_r^+} w_u(t) = \infty$ for all $u\in V(\G)$.

By (\ref{eqn:ratios}), there exist positive real numbers $\{\widetilde{w}_u\}_{u\in V(\mathcal{G})}$ with the property 
$$\frac{\widetilde{w_u}}{\widetilde{w}_v} = \lim_{t\to \rho_r^+} \frac{w_u(t)}{w_v(t)}.$$ 
Taking the limit as $t \to \rho_r^+$ of (\ref{eqn:t}), we get
$$\rho_r \sqrt{\widetilde{w}_u} =  b_u +  \sum_{ \diredge\in \sigma(u)} a_\diredge \sqrt{\widetilde{w}_{\tau(\diredge)}} .$$
Recalling the definition (\ref{eqn:jacobi}) of $A_\G$, the above equation explicitly shows that $\rho_r$ is an eigenvalue of $A_\G$ with an eigenvector with positive entries $\left(\sqrt{\widetilde{w}_\diredge}\right)_{\diredge \in E(\G)}$.  Now take $\lambda > 0$ large enough so that $\lambda + b_u > 0$ for all $u \in V(\G)$.  Then the entries of $A_\G + \lambda$ are nonnegative,  the eigenvector for $\rho_r + \lambda$ has positive entries, and $A_\G$ is irreducible (since as a directed graph $\G$ is strongly connected),  so by the Perron-Frobenius theorem  $\rho_r + \lambda$ is in fact the maximum eigenvalue of $A_\G + \lambda$.  Thus $\rho_r$ is the maximum eigenvalue of $A_\G$, as desired.
\end{proof}

The following theorem of Sy and Sunada will be useful.  We rephrase it in the language of Jacobi matrices on graphs.  Their original formulation is in terms of what they call a \emph{discrete Schrodinger operator}, but every graph Jacobi matrix with positive edge weights $a_\diredge$ can be represented in their framework.
\begin{theorem}[\cite{sy1992discrete}] \label{thm:sysunada}
Let $\G_1$ be a finite connected graph with no loops or multi-edges, and let $A_{\G_1}$ be a Jacobi matrix on $\G_1$ with $a_e > 0$ for all $e \in E(\G_1)$.  Let $\G_2$ be a graph covering $\G_1$, and let $A_{\G_2}$ be the lift of $A_{\G_1}$.  Then $\rho_r(A_{\G_1}) \ge \rho_r(A_{\G_2})$ with equality if and only if the deck transformation group of the covering is amenable, where $\rho_r(\cdot)$ denotes the right edge of the spectrum.
\end{theorem}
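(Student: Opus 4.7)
The plan is to obtain the inequality via a Perron--Frobenius lifting argument, and then characterize the equality case through Kesten's amenability criterion applied to a suitable associated random walk.

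First I would prove the inequality. Set $\mu := \rho_r(A_{\G_1})$ and choose $\lambda > 0$ large enough that $M_1 := A_{\G_1} + \lambda I$ has nonnegative entries. Since $\G_1$ is finite and connected with $a_e > 0$, $M_1$ is irreducible, so Perron--Frobenius yields a strictly positive eigenvector $v$ for the eigenvalue $\mu + \lambda = \rho_r(M_1)$. Let $\phi\colon V(\G_2) \to V(\G_1)$ be the covering map and set $\tilde v := v \circ \phi$. Then $\tilde v > 0$ is a pointwise eigenfunction of $M_2 := A_{\G_2} + \lambda I$ for the eigenvalue $\mu+\lambda$, and iterating yields $M_2^n(x,y) \leq (\mu+\lambda)^n \tilde v(x)/\tilde v(y)$ for every $n, x, y$ (using nonnegativity of the entries of $M_2^n$). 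Invoking the standard identity $\rho_r(M_2) = \lim_n M_2^n(x,y)^{1/n}$, valid for bounded irreducible nonnegative self-adjoint operators on $\ell^2$, gives $\rho_r(A_{\G_2}) \leq \mu$.

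For the equivalence with amenability, I would perform a ground-state $h$-transform with $h = \tilde v$. On the weighted Hilbert space $\ell^2(V(\G_2), \tilde v^2)$, the operator $A_{\G_2}$ is unitarily equivalent (the $b_v$-diagonal being $G$-invariant and absorbed harmlessly) to a self-adjoint operator whose top of spectrum is $\mu$ if and only if the symmetric sub-Markov kernel $P(x,y) := a_{\{x,y\}} \tilde v(y)/(\mu \tilde v(x))$ has $\ell^2(\tilde v^2)$-spectral radius equal to $1$. Because $\tilde v$ is $G$-invariant by construction (being pulled back from $\G_1$), the measure $\tilde v^2$ and the kernel $P$ are also $G$-invariant, and the quotient is an irreducible random walk on the finite set $V(\G_1)$. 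Kesten's theorem, in its $G$-equivariant formulation for random walks on a $G$-space with finite quotient, then asserts that $1 \in \sigma(P)$ if and only if the deck transformation group $G$ is amenable, yielding the claimed equivalence.

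The main obstacle is the implication ``equality implies amenability,'' which requires extracting Følner sets for $G$ from $\ell^2$-approximate eigenvectors at the top of the spectrum of $P$; this is essentially the hard direction of Kesten's theorem in the setting of a $G$-equivariant Markov chain. The reverse implication is more direct: given a Følner sequence $\{F_n\} \subset G$ and a finite fundamental domain $D \subset V(\G_2)$, the truncations $\psi_n(x) := \tilde v(x)\, \mathbf{1}_{F_n\cdot D}(x)$ lie in $\ell^2(V(\G_2))$ and satisfy $\|(A_{\G_2}-\mu)\psi_n\|/\|\psi_n\| \to 0$, since the discrepancy is supported on the boundary of $F_n\cdot D$, whose size is $o(|F_n|)$ by the Følner condition. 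This places $\mu$ in $\sigma(A_{\G_2})$ and closes the loop.
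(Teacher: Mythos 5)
The paper does not prove this statement: it is imported verbatim (modulo a change of language) from Sy--Sunada \cite{sy1992discrete}, so there is no in-paper proof to compare against. Judged on its own terms, your argument for the inequality $\rho_r(A_{\G_2})\le\rho_r(A_{\G_1})$ is correct and complete: pulling back the Perron eigenvector, bounding $M_2^n(x,y)$ by nonnegativity, and extracting the spectral radius from the diagonal matrix entries (you want $\lim_n M_2^{2n}(x,x)^{1/2n}$ and $\lambda$ large enough that $\sigma(M_2)\subset[0,\infty)$, but these are cosmetic) is the standard route and works.

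For the equivalence with amenability, two points. First, a repairable slip: the ground-state kernel must be built from the shifted operator, $P(x,y)=M_2(x,y)\,\tilde v(y)/\bigl((\mu+\lambda)\tilde v(x)\bigr)$ with $M_2=A_{\G_2}+\lambda I$; your formula drops both the diagonal $b_x+\lambda$ and the shift in the normalization, so as written $P$ is only sub-stochastic and $\rho(P)=1$ is not equivalent to $\rho_r(A_{\G_2})=\mu$. Second, and more substantively, your argument tacitly assumes the covering is normal, so that the deck group $G$ acts on $V(\G_2)$ with quotient exactly $V(\G_1)$; for a non-normal cover the deck group can be trivial while the cover is infinite, and the reduction (and indeed the theorem as literally stated) breaks down. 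This is harmless for the paper's application (universal covers are normal) but should be said. Finally, the heart of the matter --- that for a $G$-invariant, reversible, irreducible Markov kernel with finite quotient and $G$ discrete acting freely, $\rho(P)=1$ if and only if $G$ is amenable --- is exactly the quasi-transitive Kesten criterion (Brooks, Soardi--Woess, Salvatori; see Woess's book), which is a theorem of essentially the same depth as the Sy--Sunada result itself. You acknowledge black-boxing its hard direction, so what you have is a correct reduction of the cited theorem to another theorem of comparable weight, plus a genuinely self-contained proof of the inequality and of the F{\o}lner ($\Leftarrow$) direction. That is a reasonable outline, and more than the paper supplies, but it is not yet a proof of the equality case.
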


We may now prove our main result on the edge of the spectrum: \\

\noindent \textbf{Restatement of Theorem \ref{thm:vanishingdensity}}. \emph{
Let $A_\G$ be a Jacobi matrix on a finite graph $\mathcal{G}$ with at least two cycles, and let $A_\T$ be its pullback to the universal cover $\T(\G).$ Furthermore assume that $a_\diredge >0$ for all $\diredge \in E(\G)$.   Let $\mu$ and $\rho_r$ be the density of states and the maximum element of the spectrum of $A_\T$, respectively.  Then $\mu$ is absolutely continuous in a neighborhood of $\rho_r$ and $\lim_{x\to \rho_r} \frac{d\mu}{dx}(x) =0$. }

\begin{proof}
Since $\G$ has at least two cycles, the deck transformation group of the universal cover contains the free group on two generators as a subgroup, and is therefore nonamenable.   If $\G$ has no loops and no multi-edges, Theorem \ref{thm:sysunada} then implies that the maximum eigenvalue of $A_\G$ is strictly greater than $\rho_r$.  Thus, applying the contrapositive of Proposition \ref{thm:perron} we have \[\lim_{\eps \to 0^+} \frac{\mu((\rho_r - \eps, \rho_r])}{\eps} = 0.\] 
In particular $\mu$ has no atoms in $(\rho_r - \eps, \rho_r]$ for $\eps$ sufficiently small.  Also, $\mu$ has no singular continuous part (\cite{avni2020periodic}) due to algebraicity of the Cauchy transforms.  Thus, the conclusion follows.

If, on the other hand, $\G$ has loops or multi-edges, we use the following workaround.  One may first take a finite cover $\G'$ of $\G$ that does not contain loops or multi-edges.  (If $j$ is the maximum number of loops at any vertex and $k$ is the maximum number of edges in any multi-edge, a cover with $\max \{2j+1, k\}$ sheets suffices.)   Since $\T(\G) = \T(\G')$, the only thing left to check is that $\rho_r(A_\G) \ge \rho_r(A_{\G'})$, where $\rho_r(\cdot)$ denotes the maximum eigenvalue.  This holds because the top eigenvector $v$ of $A_{\G'}$ can be projected down to an eigenvector $v'$ of $A_\G$ for the same eigenvalue by summing the entries of $v$ in each fiber.  Note that $v'$ is nonzero because $v$ has positive entries, by Perron-Frobenius applied to $A_\G + \lambda$ for $\lambda > 0$ sufficiently large (as the $b_u$ may be negative).  Thus $\rho_r(A_\G) \ge \rho_r(A_{\G'})$ as desired.
\end{proof}

\end{document}